\theoremstyle{plain}
\newtheorem{thm}{Theorem}[section]
\newtheorem{lem}[thm]{Lemma}
\newtheorem{cor}[thm]{Corollary}
\newtheorem{prop}[thm]{Proposition}
\newtheorem{rmk}[thm]{Remark}
\def\D{\mathrm{D}}
\def\U{\mathscr{U}}
\def\c{\mathrm{c}}
\def\d{\mathrm{d}}
\def\Nset{\mathbb{N}}
\def\Rset{\mathbb{R}}
\def\Sset{\mathbb{S}}
\def\Tset{\mathbb{T}}
\def\Zset{\mathbb{Z}}
\def\epsilon{\varepsilon}
\def\theequation{\arabic{section}.\arabic{equation}}
\begin{document}


\title[Synchronized solutions in the Kuramoto model]
{Bifurcations and stability of synchronized solutions in the Kuramoto model
 with uniformly spaced natural frequencies}

\author{Kazuyuki Yagasaki}

\address{Department of Applied Mathematics and Physics, Graduate School of Informatics,
Kyoto University, Yoshida-Honmachi, Sakyo-ku, Kyoto 606-8501, JAPAN}
\email{yagasaki@amp.i.kyoto-u.ac.jp}

\date{\today}
\subjclass[2020]{34C15; 45J05; 34D06; 34D20; 45M10; 37G10; 34C23}
\keywords{Kuramoto model; continuum limit; synchronization; equilibrium;
 stationary solution; bifurcation; stability.}

\begin{abstract}
We consider the classical Kuramoto model (KM) with natural frequencies
 and its continuum limit (CL),
 and discuss the existence of synchronized solutions and their bifurcations and stability.
We specifically assume that
 the frequency function is symmetric and linear in the CL,
 so that the natural frequencies are evenly spaced in the KM.
We show that in the KM,
 $O(2^n)$ one-parameter families of synchronized solutions are born
 and $O(2^n)$ {saddle-node and} pitchfork bifurcations occur at least,
 when the node number $n$ is odd and tends to infinity.
Moreover, we prove that
 a family of synchronized solutions obtained in the previous work
 suffers a saddle-node bifurcation at which its stability changes
 from asymptotically stable  to unstable
 and the other families of synchronized solutions are unstable in the KM.
For the CL, we show that
{\color{black}
 a one-parameter family of continuous synchronized solutions obtained in the previous work
 is asymptotically stable
 and that there exist uncountably many one-parameter families of discontinuous synchronized solutions
 and they are unstable along with another one-parameter family of continuous synchronized solutions.}
\end{abstract}
\maketitle


\section{Introduction}

We consider the classical Kuramoto model (KM) \cite{K75,K84} with natural frequencies,
\begin{equation} 
  \frac{\d}{\d t} u_i^n (t) =\omega_i^n 
  +\frac{K}{n} \sum^{n}_{j=1}\sin \left( u_j^n(t) - u_i^n(t) \right),\quad
   i\in [n]:=\{1,2,\ldots,n\}, 
\label{eqn:dsys} 
\end{equation}
where $u_i^n(t)\in\Sset^1=\Rset/2\pi\Zset$ and  $\omega_i^n\in\Rset$
 stand for the phase and natural frequency of the oscillator at the node $i\in[n]$ 
 and $K$ is a coupling constant.
The model \eqref{eqn:dsys} and its generalizations have provided mathematical models
 of various problems arising in many fields
 including physics, biology, chemistry, engineering, and economics,
 and have extensively been studied.
See \cite{S00,PRK01,ABVRS05,ADKMZ08,DB14,PR15,RPJK16}
 for the reviews of the enormous research works.

In the previous work \cite{IY23},
 coupled oscillator networks including \eqref{eqn:dsys} were studied
 and shown to be well approximated by the corresponding continuum limits (CLs),
 for instance, which are given by
\begin{equation} 
\frac{\partial}{\partial t}u(t,x)=\omega(x)+K\int _{I}\sin(u(t,y)-u(t,x))\d y
\label{eqn:csys}
\end{equation}
for \eqref{eqn:dsys} with
\begin{equation}
\omega_i^n=n\int_{I_i^n}\omega(x)dx, \quad i \in [n],
\label{eqn:omega}
\end{equation}
where $I=[0,1]$ and
\[
I_i^n:=
\begin{cases}
  [(i-1)/n,i/n) & \mbox{for $i<n$};\\
  [(n-1)/n,1] & \mbox{for $i=n$}.
\end{cases}
\]
We call $\omega(x)\in L^2(I)$ a \emph{frequency function}.
More general cases in which networks of coupled oscillators are defined 
 on multiple graphs which may be deterministic or random, and dense or sparse
 were discussed in \cite{IY23}.
Similar results for such networks which are defined on single graphs
 and do not have natural frequencies depending on each node
 were obtained earlier in \cite{KM17,M14a,M14b,M19}
 although they are not applicable to \eqref{eqn:dsys} and \eqref{eqn:csys}.
The same CL as \eqref{eqn:csys} was also adopted for the KM \eqref{eqn:dsys}
 without a rigorous mathematical guarantee much earlier in \cite{E85},
 while similar CLs were utilized
 for the KM with nonlocal coupling and a single or zero natural frequency
 in \cite{AS06,GHM12,WSG06}.

A different approach for approximation of coupled oscillators like \eqref{eqn:dsys}
 by integro-partial differential equations called the \emph{Vlasov equations}
 was more frequently used although its mathematical foundations were provided recently
 in \cite{C15,CN11} (see also \cite{CM19a,CM19b} for further extensions).
See \cite{S00,ABVRS05,DB14,RPJK16,DF18} and references therein for its applications.
Compared with those results, where probability density functions are treated,
 two advantages of the approach of \cite{IY23}
 are to deal with a complete deterministic case
 in which the natural frequencies are fixed like \eqref{eqn:dsys},
 and to give more direct description on the dynamics of the coupled oscillators
 without using probability density functions.

Moreover, it was shown in \cite{IY23} that
 the CL \eqref{eqn:csys} has the synchronized solutions
\begin{equation}
u(t,x)=U(x)+\Omega t+\theta,\quad
U(x)=\arcsin\left(\frac{\omega(x)-\Omega}{KC}\right),
\label{eqn:csol}
\end{equation}
where $\theta\in\Sset^1$ is an arbitrary constant,
 the range of the function $\arcsin$ is $[-\tfrac{1}{2}\pi,\tfrac{1}{2}\pi]$ and
\[
\Omega=\int_I \omega(x)\d x,
\]
if there exists a constant $C>0$ such that
\begin{equation}
C=\int_I \sqrt{1-\left(\frac{\omega(x)-\Omega}{KC}\right)^2}\d x.
\label{eqn:CU}
\end{equation}
This is easily confirmed by substituting \eqref{eqn:csol} into \eqref{eqn:csys}.
A solution of the form \eqref{eqn:csol} to \eqref{eqn:csys} was also obtained in \cite{E85}
 although the constant term 
 was not contained.
Similarly, the KM \eqref{eqn:dsys} is shown to have the synchronized solutions
\begin{equation}
u_i^n(t) = \Omega_\D t + U_i + \theta,\quad
U_i=\arcsin\left(\frac{\omega_i^n - \Omega_\D}{K C_\D}\right),
\label{eqn:dsol}
\end{equation}
where $\theta\in\Sset^1$ is an arbitrary constant and
\[
\Omega_\D=\frac{1}{n}\sum_{i=1}^n \omega_i^n,
\]
if there exists a constant $C_\D>0$ such that
\begin{equation}
C_\D=\frac{1}{n}\sum_{i=1}^n\sqrt{1-\left(\frac{\omega_i^n - \Omega_\D}{K C_\D}\right)^2}.
\label{eqn:CD}
\end{equation}
This is easily checked by substituting \eqref{eqn:dsol} into \eqref{eqn:dsys}.
We also see that
\begin{equation}
U_i\to U(x),\quad
C_\D\to C\quad
\mbox{as $n\to\infty$},
\label{eqn:lim}
\end{equation}
where $i/n\to x$ as $n\to\infty$.
See Section 3.1 of \cite{IY23} for more details.

\begin{figure}
\includegraphics[scale=0.5]{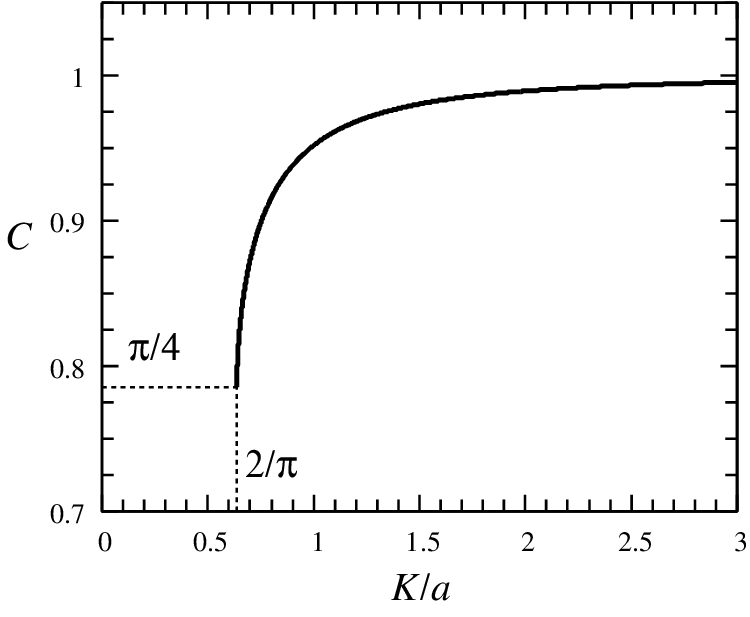}
\caption{Dependence of $C$ on $K/a$ in \eqref{eqn:C}.
\label{fig:1a}}
\end{figure}

We specifically consider the case in which
 the frequency function $\omega(x)$ is symmetric and linear, i.e.,
\begin{equation}
\omega(x)=a(x-\tfrac{1}{2}),
\label{eqn:omegaex}
\end{equation}
where $a>0$ is a constant.
Then the natural frequencies are placed equally, and
\[
\omega_i^n=\frac{a}{2n}(2i-n-1),
i\in[n].
\]
We have $\Omega=0$ and compute \eqref{eqn:CU} as
\begin{equation}
C=\frac{KC}{a}\left(\arcsin\left(\frac{a}{2KC}\right)
 +\frac{a}{2KC}\sqrt{1-\left(\frac{a}{2KC}\right)^2}\right),
\label{eqn:Cex}
\end{equation}
which yields
\begin{equation}
\frac{a}{K}=\varphi\left(\frac{a}{2KC}\right),
\label{eqn:C}
\end{equation}
where $\varphi(\eta)=\arcsin\eta+\eta\sqrt{1-\eta^2}$.
Since $\varphi(\eta)$ is monotonically increasing on $(0,1)$ and
\[
\varphi(0)=0,\quad \varphi(1)=\tfrac{1}{2}\pi,
\]
the synchronized solutions \eqref{eqn:csol} with $\Omega=0$ exist
 if {\color{black}and only if} $K/a\ge 2/\pi$,
 and it is continuous in $x$.
The dependence of $C$ on $a/K$ which is calculated from \eqref{eqn:C}
 is displayed in Fig.~\ref{fig:1a}.
Hence, the synchronized solutions \eqref{eqn:csol} suddenly appear,
 i.e., a {``bifurcation''} occurs, at $K=K_\c:=2a/\pi$
 when the coupling constant $K$ is taken as the control parameter
 and it is increased, say, from zero.
Moreover, for $n>0$ sufficiently large,
 the synchronized solutions \eqref{eqn:dsol} with $\Omega_\D=0$ suddenly appear near $K=K_\c$
 in the KM \eqref{eqn:dsys},
 which is well approximated by the CL \eqref{eqn:csys} as stated above, 
 when the coupling constant $K$ is increased.
See Section 3.2 of \cite{IY23} for the details.

The above observation seems strange
 from the viewpoint of dynamical systems theory \cite{GH83,W03}.
For finite-dimensional systems (and most infinite-dimensional systems),
 if an equilibrium state suddenly appears when a control parameter is changed,
 then a saddle-node bifurcation yielding a pair of equilibria
 of different stability types generally occurs.
 according to the theory.
So we conjecture that such a bifurcation occurs in the KM \eqref{eqn:dsys}
 when the synchronized solutions \eqref{eqn:dsol} appear.
On the other hand, in the CL \eqref{eqn:csys},
 no other continuous synchronized solutions are
 {\color{black}born from the same ones as \eqref{eqn:csol} at $K/a=2/\pi$}
 (see Theorem~\ref{thm:7a}).
One of our main objects is to explain the reason
 why such a phenomenon occurs in the CL \eqref{eqn:csys}.

In this paper, we discuss synchronized solutions and their bifurcations and stability
 in the KM \eqref{eqn:dsys} whose natural frequencies correspond
 to the symmetric linear frequency function \eqref{eqn:omegaex},
 so that they are evenly spaced.
Our discussions can apply to asymmetric linear frequency functions
 after the system \eqref{eqn:dsys} is transformed
 in the rotational frame with the rotational speed $\Omega_\D$.
The approaches used here can be extended to more general cases of natural frequencies
 although we restrict ourselves to such a simple case.

The KM \eqref{eqn:dsys} with a finite node number $n$ have been studied
 in many but fewer references than the infinite node number case.
The critical values {of $K$} for the existence and stability of synchronized solutions
 were investigated in \cite{DB11,MS05,OS16,P05,VW93,VM08}.
In \cite{OS16,P05},
 it was assumed that the natural frequencies are evenly spaced as in our setting
 (and additionally $n$ is odd in \cite{OS16}).
Relationships between the natural frequencies and synchronization
 were studied in \cite{BCD21,DB11},
 and complete synchronization states were discussed in \cite{CHJK12,DX13,HKP15}
 for a large coupling strength.
The occurrence of partial synchronization was also argued in \cite{AR04,BW21,HR20}.
For the cases of small node numbers of $n\le 5$, 
 bifurcations and chaotic attractors were detected and found mainly numerically
 in \cite{MPBT04,MPT05a,MPT05b}.
 
Here we assume that the node number $n$ is odd as in \cite{OS16},
 and show that $O(2^n)$ families of equilibria are born
 and $O(2^n)$ {saddle-node and} pitchfork bifurcations
 occur at least as $n\to\infty$
 (see Remark~\ref{rmk:5a}(iii) and Proposition~\ref{prop:5a}
 for more precise statements).
Moreover, we prove that
 the family of synchronized solutions \eqref{eqn:dsol}
 suffers a saddle-node bifurcation at which its stability changes
 from asymptotically stable  to unstable
 and the other families of synchronized solutions are unstable.
For the CL \eqref{eqn:csys}, we show that
 {\color{black}
 the one-parameter family of continuous synchronized solutions \eqref{eqn:csol} 
 is asymptotically stable
 and that there exist uncountably many one-parameter families of discontinuous synchronized solutions
 and they are unstable along with another one-parameter family of continuous synchronized solutions.}
 
The outline of this paper is as follow:
In Section 2, the previous results of \cite{IY23} are reviewed
 in the context of the KM \eqref{eqn:dsys} and its CL \eqref{eqn:csys}.
We also expand the previous result on a relationship of their stability
 and give new results on relationships of their instability.
After analyzing the case of $n=3$ in Section~3,
 we provide our results for the existence of synchronized solutions in Section~4,
 and their bifurcations and stability in Sections~5 and 6, respectively.
Finally, we discuss the implications of the results in Sections~4-6
 for the CL \eqref{eqn:csys} in Section~7,
 based on the results of Section~2.
We extensively use dynamical systems theory throughout the paper,
 and recommend the readers to consult the textbooks \cite{GH83,W03}
 if they are not familiar with the theory.
 

\section{Previous and New Fundamental Results}

We first review the results of \cite{IY23} in the context of  \eqref{eqn:dsys} and \eqref{eqn:csys}
 {and give new fundamental results
  on relationships between the KM \eqref{eqn:dsys} and CL \eqref{eqn:csys},
 which can be extended to more general coupled oscillator networks
 as in \cite{IY23}.}
See Section~2 and Appendices~A and B of \cite{IY23} for more details
 including the proofs of the theorems stated below
 except for Theorems~\ref{thm:2c}(ii), \ref{thm:2d} and \ref{thm:2e}.

Let $g(x)\in L^2(I)$.
We have the following on the existence and  uniqueness of solutions
 to the initial value problem (IVP) of the CL \eqref{eqn:csys}
 (see Theorem~2.1 of \cite{IY23}).
 
\begin{thm}
\label{thm:2a}
{For any $T\in(0,\infty)$,
 there exists a unique solution $\mathbf{u}(t)\in\linebreak C^1([0,T],L^2(I))$}
 to the IVP of \eqref{eqn:csys} with
\[
u(0,x)=g(x).
\]
Moreover, the solution depends continuously on $g$.
\end{thm}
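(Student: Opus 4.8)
The plan is to view the CL \eqref{eqn:csys} as an autonomous ordinary differential equation $\dot{\mathbf{u}}=F[\mathbf{u}]$ in the Banach space $L^2(I)$, with vector field
\[
F[u](x)=\omega(x)+K\int_I\sin\bigl(u(y)-u(x)\bigr)\,\d y,
\]
and then to invoke the classical Picard--Lindel\"of existence--uniqueness theorem for ODEs in Banach spaces, supplemented by a Gronwall estimate for the continuous dependence on $g$. The only analytic facts required are that $F$ maps $L^2(I)$ boundedly into itself and that it is globally Lipschitz; note that the right-hand side involves no spatial derivatives, so no PDE-type machinery is needed.

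First I would check the mapping property. For $u\in L^2(I)$ the integral term is bounded in modulus by $K$ for every $x$, hence lies in $L^2(I)$ since $I$ has finite measure, and $\omega\in L^2(I)$ by hypothesis; moreover $\|F[u]\|_{L^2(I)}\le\|\omega\|_{L^2(I)}+K$ for all $u$, so $F$ is bounded on the whole space. Next I would establish the Lipschitz bound: using $|\sin\alpha-\sin\beta|\le|\alpha-\beta|$ and the triangle inequality, for $u,v\in L^2(I)$,
\[
\bigl|F[u](x)-F[v](x)\bigr|\le K\int_I\bigl(|u(y)-v(y)|+|u(x)-v(x)|\bigr)\,\d y=K\|u-v\|_{L^1(I)}+K|u(x)-v(x)|,
\]
and since $\|w\|_{L^1(I)}\le\|w\|_{L^2(I)}$ on $I=[0,1]$ by the Cauchy--Schwarz inequality, taking $L^2$ norms yields $\|F[u]-F[v]\|_{L^2(I)}\le 2K\|u-v\|_{L^2(I)}$, i.e.\ $F$ is globally Lipschitz with constant $2K$.

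Given these two properties, the standard contraction argument applied to the integral form $\mathbf{u}(t)=g+\int_0^t F[\mathbf{u}(s)]\,\d s$ on $C([0,T_0],L^2(I))$ produces, for any $T_0<1/(2K)$, a unique solution on $[0,T_0]$; because $F$ is bounded on all of $L^2(I)$ there is no finite-time blow-up, so iterating this construction extends the solution to $[0,T]$ for arbitrary $T\in(0,\infty)$, with uniqueness on $[0,T]$ inherited from uniqueness of the fixed point. Since $\mathbf{u}\in C([0,T],L^2(I))$ and $F$ is continuous, $s\mapsto F[\mathbf{u}(s)]$ is continuous, whence by the fundamental theorem of calculus $\dot{\mathbf{u}}=F[\mathbf{u}]\in C([0,T],L^2(I))$ and $\mathbf{u}\in C^1([0,T],L^2(I))$. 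For the continuous dependence, if $\mathbf{u},\mathbf{v}$ solve the IVP with data $g,h$, subtracting the integral equations and applying the Lipschitz bound gives $\|\mathbf{u}(t)-\mathbf{v}(t)\|_{L^2(I)}\le\|g-h\|_{L^2(I)}+2K\int_0^t\|\mathbf{u}(s)-\mathbf{v}(s)\|_{L^2(I)}\,\d s$, so Gronwall's inequality yields $\|\mathbf{u}(t)-\mathbf{v}(t)\|_{L^2(I)}\le\|g-h\|_{L^2(I)}e^{2Kt}$ on $[0,T]$.

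Since the whole argument is routine, I do not expect a genuine obstacle. The one point needing a little care is that $u$ takes values in $\Sset^1=\Rset/2\pi\Zset$ rather than in $\Rset$; I would handle this by working with $\Rset$-valued representatives, observing that $F$ depends on $u$ only through the differences $u(y)-u(x)$ and therefore descends unambiguously to circle-valued functions, all of the estimates above being insensitive to the choice of representative, as in \cite{IY23}.
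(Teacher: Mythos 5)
Your proposal is correct, and it follows essentially the same route as the paper, which does not reprove this statement but defers to Theorem~2.1 of \cite{IY23}: there the result is likewise obtained by viewing \eqref{eqn:csys} as an ODE in the Banach space $L^2(I)$ whose right-hand side is bounded and globally Lipschitz, and applying the standard Picard--Lindel\"of contraction argument together with a Gronwall estimate for continuous dependence. Your verification of the Lipschitz bound and the global extension via the uniform local existence time are exactly the ingredients needed, so no gap remains.
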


We next consider the IVP of the KM \eqref{eqn:dsys}
 and turn to the issue on convergence of solutions in \eqref{eqn:dsys}
 to those in the CL \eqref{eqn:csys}.
Since the right-hand side of \eqref{eqn:dsys} is Lipschitz continuous in $u_i^n$, $i\in[n]$,
 we see by a fundamental result of ordinary differential equations
 (e.g., Theorem~2.1 of Chapter~1 of \cite{CL55})
 that the IVP of \eqref{eqn:dsys} has a unique solution.
Let $\mathbf{u}:\Rset\to L^2(I)$ stand for an $L^2(I)$-valued function.
Given a solution $u_n(t)=(u_1^n(t), u_2^n(t), \ldots, u_n^n(t))$ to the IVP of \eqref{eqn:dsys},
 we define an $L^2(I)$-valued function $\mathbf{u}_n:\Rset\to L^2(I)$ as
\begin{equation*}
\mathbf{u}_n(t) = \sum^{n}_{i=1} u_i^n(t) \mathbf{1}_{I_i^n},
\end{equation*}
where {$\mathbf{1}_{I_i^n}$}
 represents the characteristic function of $I_i^n$ for $i\in[n]$.
Let $\|\cdot\|$ denote the norm in $L^2(I)$.
We have the following from Theorem~2.3 of\cite{IY23}.

\begin{thm}
\label{thm:2b}
If $\mathbf{u}_{n}(t)$ is the solution to the IVP of \eqref{eqn:dsys} such that
\begin{equation}
\lim_{n\to\infty}\|\mathbf{u}_n(0)-\mathbf{u}(0)\|=0,
\label{eqn:icd}
\end{equation}
then for any $T > 0$ we have
\[
\lim_{n \rightarrow \infty}\max_{t\in[0,T]}\|\mathbf{u}_n(t)-\mathbf{u}(t)\|=0,
\]
where $\mathbf{u}(t)$ represents the solution
 to the IVP of the CL \eqref{eqn:csys}. 
\end{thm}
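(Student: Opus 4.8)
The plan is to view both \eqref{eqn:dsys} and \eqref{eqn:csys} as ordinary differential equations on the Hilbert space $L^2(I)$ and close the estimate with a Gronwall argument. The first step is to rewrite the finite system \eqref{eqn:dsys} as a single evolution equation for the step function $\mathbf{u}_n(t)$. Using \eqref{eqn:omega} and $\tfrac1n=\int_{I_j^n}\d y$ for each $j\in[n]$, one checks that for $x\in I_i^n$,
\[
\frac{\partial}{\partial t}\mathbf{u}_n(t)(x)=\omega_n(x)+K\int_I\sin\bigl(\mathbf{u}_n(t)(y)-\mathbf{u}_n(t)(x)\bigr)\,\d y,\qquad\omega_n:=\sum_{i=1}^n\omega_i^n\mathbf{1}_{I_i^n},
\]
i.e.\ $\mathbf{u}_n$ solves exactly the CL \eqref{eqn:csys} but with the frequency function $\omega$ replaced by $\omega_n$, which by \eqref{eqn:omega} is precisely the $L^2(I)$-orthogonal projection of $\omega$ onto the functions that are constant on each $I_i^n$. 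Both $\mathbf{u}_n$ and $\mathbf{u}$ lie in $C^1([0,T],L^2(I))$: the former because the right-hand side of \eqref{eqn:dsys} is globally Lipschitz, the latter by Theorem~\ref{thm:2a}.

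The second step records two elementary facts. (a) Since $\omega\in L^2(I)$ and step functions with finitely many jumps are dense in $L^2(I)$ — and the projection onto the level-$n$ partition changes such a function only on the $O(1)$ intervals $I_i^n$ containing a jump, hence by $O(n^{-1/2})$ in norm — we have $\|\omega_n-\omega\|\to0$ as $n\to\infty$. (b) Writing $F(\mathbf{v})(x):=\int_I\sin(\mathbf{v}(y)-\mathbf{v}(x))\,\d y$, the bound $|\sin a-\sin b|\le|a-b|$ gives $|F(\mathbf{v})(x)-F(\mathbf{w})(x)|\le\int_I|\mathbf{v}(y)-\mathbf{w}(y)|\,\d y+|\mathbf{v}(x)-\mathbf{w}(x)|$, and squaring and integrating over $I=[0,1]$ yields $\|F(\mathbf{v})-F(\mathbf{w})\|\le2\|\mathbf{v}-\mathbf{w}\|$; thus $KF$ is globally Lipschitz on $L^2(I)$.

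The third step is the comparison. Passing to the integral (Duhamel) form of the two equations and subtracting gives
\[
\mathbf{u}_n(t)-\mathbf{u}(t)=\bigl(\mathbf{u}_n(0)-\mathbf{u}(0)\bigr)+t\,(\omega_n-\omega)+K\int_0^t\bigl(F(\mathbf{u}_n(s))-F(\mathbf{u}(s))\bigr)\,\d s,
\]
so that for $t\in[0,T]$,
\[
\|\mathbf{u}_n(t)-\mathbf{u}(t)\|\le\|\mathbf{u}_n(0)-\mathbf{u}(0)\|+T\|\omega_n-\omega\|+2K\int_0^t\|\mathbf{u}_n(s)-\mathbf{u}(s)\|\,\d s.
\]
Gronwall's inequality then gives $\|\mathbf{u}_n(t)-\mathbf{u}(t)\|\le\bigl(\|\mathbf{u}_n(0)-\mathbf{u}(0)\|+T\|\omega_n-\omega\|\bigr)e^{2KT}$ uniformly on $[0,T]$, and the right-hand side tends to $0$ as $n\to\infty$ by the hypothesis \eqref{eqn:icd} together with step (a); this is the asserted uniform convergence. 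The argument is essentially routine: the only point needing genuine care — and the only place where the special sampling \eqref{eqn:omega} of the natural frequencies enters — is the identification of $\mathbf{u}_n$ with a bona fide solution of \eqref{eqn:csys} driven by the projected frequency $\omega_n$, together with the control $\|\omega_n-\omega\|\to0$; everything else is the standard Gronwall machinery for a globally Lipschitz vector field on a Banach space.
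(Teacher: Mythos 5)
Your proof is correct and complete: the identification of $\mathbf{u}_n$ as an exact solution of the continuum equation with the projected frequency $\omega_n$, the $L^2$-convergence $\|\omega_n-\omega\|\to 0$, the global Lipschitz bound $\|F(\mathbf{v})-F(\mathbf{w})\|\le 2\|\mathbf{v}-\mathbf{w}\|$, and the Gronwall closure are all valid. The paper does not reproduce a proof of this theorem (it defers to Theorem~2.3 and Appendix~B of \cite{IY23}), but your argument is the standard one underlying that reference, so there is nothing further to compare.
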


\begin{rmk}
The condition~\eqref{eqn:icd} holds if one takes
\begin{equation}
u_i^n(0) = u_{i0}^n:=n\int_{I_i^n}{u(0,x)}\d x
\label{eqn:icd0}
\end{equation}
for $n>0$ sufficiently large.
Equation~\eqref{eqn:icd0} was assumed instead of \eqref{eqn:icd}
 in the original statement of Theorem~$2.3$ in {\rm\cite{IY23}},
 which can be modified as in Theorem~$\ref{thm:2b}$.
See Appendix~B of {\rm\cite{IY23}}.
\end{rmk}

For $\theta\in\Sset^1$,
 let $\boldsymbol{\theta}$ represent the constant function $u=\theta$.
If $\bar{\mathbf{u}}_n(t)$ is a solution to the KM \eqref{eqn:dsys},
 then so is $\bar{\mathbf{u}}_n(t)+\boldsymbol{\theta}$ for any $\theta\in\Sset^1$.
Similarly, if $\bar{\mathbf{u}}(t)$ is a solution to the CL \eqref{eqn:csys},
 then so is $\bar{\mathbf{u}}(t)+\boldsymbol{\theta}$ for any $\theta\in\Sset^1$.
Let $\U_n=\{\bar{\mathbf{u}}_n(t)+\boldsymbol{\theta}\mid\theta\in\Sset^1\}$
 and $\U=\{\bar{\mathbf{u}}(t)+\boldsymbol{\theta}\mid\theta\in\Sset^1\}$
 denote the families of solutions to \eqref{eqn:dsys} and \eqref{eqn:csys}
 like \eqref{eqn:dsol} and \eqref{eqn:csol}, respectively.
We say that $\U_n$ (resp. $\U$) is \emph{stable}
 if solutions starting in its (smaller) neighborhood
 remain in its (larger) neighborhood for $t\ge 0$,
 and \emph{asymptotically stable} if $\U_n$ (resp. $\U$) is stable
 and the distance between such solutions and $\U_n$ (resp. $\U$) converges to zero
 as $t\to\infty$.
We obtain the following result.

\begin{thm}
\label{thm:2c}
Suppose that the KM\eqref{eqn:dsys} and CL \eqref{eqn:csys}
 have solutions $\bar{\mathbf{u}}_n(t)$ and $\bar{\mathbf{u}}(t)$, respectively, such that
\begin{equation}
\lim_{n\to\infty}\|\bar{\mathbf{u}}_n(t)-\bar{\mathbf{u}}(t)\|=0
\label{eqn:2c}
\end{equation}
for any $t\in[0,\infty)$.
Then the following hold$:$
\begin{enumerate}
\setlength{\leftskip}{-1.8em}
\item[\rm(i)]
If $\U_n$ is stable $($resp. asymptotically stable$)$ for $n>0$ sufficiently large,
 then $\U$ is also stable $($resp. asymptotically stable$)$.
\item[\rm(ii)]

If $\U$ is stable, then for any $\epsilon,T>0$ there exists $\delta>0$
 such that for $n>0$ sufficiently large,
 if $\mathbf{u}_n(t)$ is a solution to the KM \eqref{eqn:dsys} satisfying
\begin{equation}
\min_{\theta\in\Sset^1}
 \|\mathbf{u}_n(0)-\bar{\mathbf{u}}_n(0)-\boldsymbol\theta\|<\delta,
\label{eqn:2c1}
\end{equation}
then
\begin{equation}
\max_{t\in[0,T]}\min_{\theta\in\Sset^1}
 \|\mathbf{u}_n(t)-\bar{\mathbf{u}}_n(t)-\boldsymbol\theta\|<\epsilon.
\label{eqn:2c2}
\end{equation}
Moreover, if $\U$ is asymptotically stable, then
\begin{equation}
\lim_{t\to\infty}\lim_{n\to\infty}{\color{black}\min_{\theta\in\Sset^1}}
 \|\mathbf{u}_n(t)-\bar{\mathbf{u}}_n(t)-\boldsymbol\theta\|=0,
\label{eqn:2c3}
\end{equation}
where $\mathbf{u}_n(t)$ is any solution to \eqref{eqn:dsys}
 such that $\mathbf{u}_n(0)$ is contained in the basin of attraction for $\U$.
\end{enumerate}
\end{thm}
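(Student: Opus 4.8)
The plan is to reduce both parts to two tools already in hand: the uniqueness and continuous dependence of CL solutions on initial data (Theorem~\ref{thm:2a}) and the convergence of KM solutions to CL solutions on compact time intervals (Theorem~\ref{thm:2b}). Part~(i), established in \cite{IY23}, runs ``from the CL to the KM'': a CL solution starting near $\U$ is approximated by KM solutions, which therefore start near $\U_n$, hence remain near $\U_n$ for all $t\ge0$ by stability of $\U_n$, and letting $n\to\infty$ pointwise in $t$---using $\|\bar{\mathbf{u}}_n(t)-\bar{\mathbf{u}}(t)\|\to0$ from \eqref{eqn:2c}---transfers the bound back to $\U$. I concentrate on part~(ii), which goes the other way. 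A useful preliminary is that \eqref{eqn:2c} can be upgraded: since $\bar{\mathbf{u}}_n$ is the KM solution with datum $\bar{\mathbf{u}}_n(0)$ and, by uniqueness in Theorem~\ref{thm:2a}, $\bar{\mathbf{u}}$ is the CL solution with datum $\bar{\mathbf{u}}(0)$, Theorem~\ref{thm:2b} applied with \eqref{eqn:2c} at $t=0$ gives $\max_{t\in[0,T]}\|\bar{\mathbf{u}}_n(t)-\bar{\mathbf{u}}(t)\|\to0$ for every $T>0$; thus $\U_n$ and $\U$ are uniformly close on every compact time interval.

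For \eqref{eqn:2c1}--\eqref{eqn:2c2}, fix $\epsilon,T>0$, let $\mathbf{u}_n(t)$ be a KM solution as in \eqref{eqn:2c1}, and let $\mathbf{v}_n(t)$ be the CL solution with $\mathbf{v}_n(0)=\mathbf{u}_n(0)$. Applying the triangle inequality and then taking the infimum over $\theta$ gives
\[
\min_{\theta\in\Sset^1}\|\mathbf{u}_n(t)-\bar{\mathbf{u}}_n(t)-\boldsymbol\theta\|
\le\|\mathbf{u}_n(t)-\mathbf{v}_n(t)\|
 +\min_{\theta\in\Sset^1}\|\mathbf{v}_n(t)-\bar{\mathbf{u}}(t)-\boldsymbol\theta\|
 +\|\bar{\mathbf{u}}(t)-\bar{\mathbf{u}}_n(t)\|.
\]
By the preliminary, the last term is $<\epsilon/3$ uniformly on $[0,T]$ once $n$ is large. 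The middle term is the distance from $\mathbf{v}_n(t)$ to $\U$ at time $t$; since $\mathbf{v}_n(0)=\mathbf{u}_n(0)$ lies within $\delta+\|\bar{\mathbf{u}}_n(0)-\bar{\mathbf{u}}(0)\|$ of $\U$, choosing $\delta$ small enough relative to the stability modulus of $\U$ for tolerance $\epsilon/3$, and $n$ large, keeps this term $<\epsilon/3$ for all $t\ge0$. For the first term one expects $\max_{t\in[0,T]}\|\mathbf{u}_n(t)-\mathbf{v}_n(t)\|\to0$, because $\mathbf{u}_n$ is the KM solution and $\mathbf{v}_n$ the CL solution with the \emph{same} initial datum. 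Summing the three estimates gives \eqref{eqn:2c2}.

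For \eqref{eqn:2c3}, I read ``$\mathbf{u}_n(0)$ in the basin of attraction of $\U$'' as $\mathbf{u}_n(0)\to g$ in $L^2(I)$ for a fixed $g$ in that (open) basin---for instance $\mathbf{u}_n(0)$ the projections \eqref{eqn:icd0} of $g$---and let $\mathbf{v}(t)$ be the CL solution with $\mathbf{v}(0)=g$. Theorem~\ref{thm:2b} gives $\mathbf{u}_n(t)\to\mathbf{v}(t)$ for each fixed $t$, so the same triangle inequality, now with the fixed $\mathbf{v}$, yields
\[
\lim_{n\to\infty}\min_{\theta\in\Sset^1}\|\mathbf{u}_n(t)-\bar{\mathbf{u}}_n(t)-\boldsymbol\theta\|
 =\min_{\theta\in\Sset^1}\|\mathbf{v}(t)-\bar{\mathbf{u}}(t)-\boldsymbol\theta\|.
\]
Since $g$ lies in the basin of attraction and $\U$ is asymptotically stable, the right-hand side tends to $0$ as $t\to\infty$, which is \eqref{eqn:2c3} with the order of limits exactly as stated.

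The main difficulty I anticipate is technical rather than conceptual: in the bound for \eqref{eqn:2c2} the CL comparison solution $\mathbf{v}_n$ depends on $n$, so Theorem~\ref{thm:2b} does not apply verbatim to $\|\mathbf{u}_n(t)-\mathbf{v}_n(t)\|$. I expect this to follow from the Gronwall-type argument behind Theorem~\ref{thm:2b} in \cite{IY23}, provided the discretization error there is uniform over bounded sets of initial data, and verifying that uniformity is the step needing care. A secondary point is to pin down the meaning of ``basin of attraction'' in \eqref{eqn:2c3}; taking it as convergence of $\mathbf{u}_n(0)$ to a fixed interior point of the basin, as above, lets Theorem~\ref{thm:2b} be invoked directly. (The analogous delicate point for part~(i)---uniformity in $n$ of the stability neighborhood of $\U_n$---is handled in \cite{IY23}.)
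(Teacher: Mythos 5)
Your proof is correct and follows essentially the same route as the paper's: a three-term triangle inequality against a continuum-limit comparison solution, with Theorem~\ref{thm:2b} controlling the KM--CL discrepancies, the stability modulus of $\U$ controlling the middle term, and the same exchange of limits ($n\to\infty$ first, then $t\to\infty$) for \eqref{eqn:2c3}. The $n$-dependence of the comparison CL solution that you flag as the delicate point is present (and glossed over) in the paper's own argument as well, so you have not introduced any gap beyond what the paper already accepts.
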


\begin{proof}
We only give a proof of part~(ii)
 since part~(i) is obtained from Theorem~2.7(i) in \cite{IY23}.
Suppose that $\U$ is stable
 and let $\epsilon>0$ and $T>0$ be sufficiently small and large, respectively.
Then there exists $\delta>0$ such that
 if $\mathbf{u}(t)$ is a solution to the CL \eqref{eqn:csys} satisfying
\[
{\color{black}\min_{\theta\in\Sset^1}}
 \|\mathbf{u}(0)-\bar{\mathbf{u}}(0)-\boldsymbol\theta\|
 <3\delta<\tfrac{1}{2}\epsilon,
\]
then
\[
{\color{black}\min_{\theta\in\Sset^1}}\|\mathbf{u}(t)-\bar{\mathbf{u}}(t)-\boldsymbol\theta\|
<\epsilon-2\delta
\]
for any $t\in[0,T]$.
By Theorem~\ref{thm:2b} and our assumption,
 we can take {\color{black}a solution $\mathbf{u}(t)$ to the CL \eqref{eqn:csys} and}
 $N>0$ sufficiently large such that if $n>N$, then
\[
\|\mathbf{u}_n(t)-\mathbf{u}(t)\|,
\|\bar{\mathbf{u}}_n(t)-\bar{\mathbf{u}}(t)\|<\delta.
\]
Hence, if Eq.~\eqref{eqn:2c1} holds and $n>N$, then
\begin{align*}
{\color{black}\min_{\theta\in\Sset^1}}\|\mathbf{u}(0)-\bar{\mathbf{u}}(0)-\boldsymbol\theta\|
<&{\color{black}\min_{\theta\in\Sset^1}}\|\mathbf{u}_n(0)-\bar{\mathbf{u}}_n(0)-\boldsymbol\theta\|\\
& +\|\mathbf{u}_n(0)-\mathbf{u}(0)\|+\|\bar{\mathbf{u}}_n(0)-\bar{\mathbf{u}}(0)\|<3\delta,
\end{align*}
so that
\begin{align*}
{\color{black}\min_{\theta\in\Sset^1}}
 \|\mathbf{u}_n(t)-\bar{\mathbf{u}}_n(t)-\boldsymbol\theta\|
<&{\color{black}\min_{\theta\in\Sset^1}}
 \|\mathbf{u}(t)-\bar{\mathbf{u}}(t)-\boldsymbol\theta\|\notag\\
& +\|\mathbf{u}_n(t)-\mathbf{u}(t)\|+\|\bar{\mathbf{u}}_n(t)-\bar{\mathbf{u}}(t)\|
 <\epsilon,
\end{align*}
which yields \eqref{eqn:2c2}.
Moreover, if $\U$ is asymptotically stable, then
\[
\lim_{n\to\infty}{\color{black}\min_{\theta\in\Sset^1}}
 \|\mathbf{u}_n(t)-\bar{\mathbf{u}}_n(t)-\boldsymbol\theta\|
 ={\color{black}\min_{\theta\in\Sset^1}}\|\mathbf{u}(t)-\bar{\mathbf{u}}(t)-\boldsymbol\theta\|,
\]
in which we take the limit as $t\to\infty$ to obtain \eqref{eqn:2c3}.
\end{proof}

\begin{rmk}
$\U_n$ may not be stable or asymptotically stable in the KM \eqref{eqn:dsys}
 for $n>0$ sufficiently large even if so is $\U$ in the CL \eqref{eqn:csys}.
In the definition of stability and asymptotic stability of solutions to the CL \eqref{eqn:csys},
 we cannot distinguish two solutions that are different only in a set with the Lebesgue measure zero.
\end{rmk}

From the proof of Theorem~{\rm\ref{thm:2c}}
 we immediately the following without assuming the existence
 of the solution $\bar{\mathbf{u}}_n(t)$ to the KM \eqref{eqn:dsys}
 satisfying \eqref{eqn:2c}.

\begin{cor}
\label{cor:2a}
Suppose that the CL \eqref{eqn:csys} has a solution $\bar{\mathbf{u}}(t)$
 and $\U=\{\bar{\mathbf{u}}(t)+\boldsymbol{\theta}\mid\theta\in\Sset^1\}$ is stable.
Then for any $\epsilon,T>0$ there exists $\delta>0$ such that for $n>0$ sufficiently large,
 if $\mathbf{u}_n(t)$ is a solution to the KM \eqref{eqn:dsys} satisfying
\[
\min_{\theta\in\Sset^1}\|\mathbf{u}_n(0)-\bar{\mathbf{u}}(0)-\boldsymbol\theta\|<\delta,
\]
then
\[
\max_{t\in[0,T]}\min_{\theta\in\Sset^1}
 \|\mathbf{u}_n(t)-\bar{\mathbf{u}}(t)-\boldsymbol\theta\|<\epsilon.
\]
Moreover, if $\U$ is asymptotically stable, then
\[
\lim_{t\to\infty}\lim_{n\to\infty}{\color{black}\min_{\theta\in\Sset^1}}
 \|\mathbf{u}_n(t)-\bar{\mathbf{u}}(t)-\boldsymbol\theta\|=0,
\]
where $\mathbf{u}_n(t)$ is any solution to \eqref{eqn:dsys}
 such that $\mathbf{u}_n(0)$ is contained in the basin of attraction for $\U$.
\end{cor}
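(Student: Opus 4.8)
The plan is to follow the proof of Theorem~\ref{thm:2c}(ii) almost word for word, the only simplification being that no companion solution $\bar{\mathbf{u}}_n(t)$ of the KM \eqref{eqn:dsys} enters: wherever that proof uses $\bar{\mathbf{u}}_n(t)$ as the reference orbit and the term $\|\bar{\mathbf{u}}_n(t)-\bar{\mathbf{u}}(t)\|$ in the triangle inequalities, I would use the CL reference $\bar{\mathbf{u}}(t)$ directly and that term vanishes. Consequently the hypothesis \eqref{eqn:2c}, $\lim_{n\to\infty}\|\bar{\mathbf{u}}_n(t)-\bar{\mathbf{u}}(t)\|=0$, is no longer needed, and only stability (resp. asymptotic stability) of $\U$ in the CL \eqref{eqn:csys}, together with Theorems~\ref{thm:2a} and \ref{thm:2b}, is used.

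Concretely, given $\epsilon>0$ small and $T>0$ large, I would first use stability of $\U$ in \eqref{eqn:csys} to fix $\delta>0$ so small that every solution $\mathbf{u}(t)$ of \eqref{eqn:csys} with $\min_{\theta\in\Sset^1}\|\mathbf{u}(0)-\bar{\mathbf{u}}(0)-\boldsymbol\theta\|<2\delta$ satisfies $\min_{\theta\in\Sset^1}\|\mathbf{u}(t)-\bar{\mathbf{u}}(t)-\boldsymbol\theta\|<\epsilon-\delta$ for all $t\ge 0$. Then, starting from a KM solution $\mathbf{u}_n(t)$ with $\min_{\theta\in\Sset^1}\|\mathbf{u}_n(0)-\bar{\mathbf{u}}(0)-\boldsymbol\theta\|<\delta$, I would use the translation invariance of \eqref{eqn:dsys} to replace $\mathbf{u}_n(t)$ by a translate (which leaves the quantity in the conclusion unchanged), reducing to $\|\mathbf{u}_n(0)-\bar{\mathbf{u}}(0)\|<\delta$, and then invoke Theorems~\ref{thm:2a} and \ref{thm:2b} to obtain a CL solution $\mathbf{u}(t)$ shadowing $\mathbf{u}_n(t)$ and an $N>0$ with $\|\mathbf{u}_n(t)-\mathbf{u}(t)\|<\delta$ for all $n>N$ and $t\in[0,T]$. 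Two triangle inequalities then finish the finite-time claim: at $t=0$ one finds $\min_{\theta\in\Sset^1}\|\mathbf{u}(0)-\bar{\mathbf{u}}(0)-\boldsymbol\theta\|<2\delta$, so the stability estimate applies to $\mathbf{u}(t)$, and for $t\in[0,T]$,
\[
\min_{\theta\in\Sset^1}\|\mathbf{u}_n(t)-\bar{\mathbf{u}}(t)-\boldsymbol\theta\|
\le\min_{\theta\in\Sset^1}\|\mathbf{u}(t)-\bar{\mathbf{u}}(t)-\boldsymbol\theta\|+\|\mathbf{u}_n(t)-\mathbf{u}(t)\|
<(\epsilon-\delta)+\delta=\epsilon.
\]
For the asymptotic statement I would argue exactly as in Theorem~\ref{thm:2c}(ii): Theorem~\ref{thm:2b} gives $\lim_{n\to\infty}\min_{\theta\in\Sset^1}\|\mathbf{u}_n(t)-\bar{\mathbf{u}}(t)-\boldsymbol\theta\|=\min_{\theta\in\Sset^1}\|\mathbf{u}(t)-\bar{\mathbf{u}}(t)-\boldsymbol\theta\|$ for each fixed $t$, and, since $\mathbf{u}_n(0)$ is taken in the basin of attraction of $\U$, the right-hand side tends to $0$ as $t\to\infty$ by asymptotic stability of $\U$ in \eqref{eqn:csys}; letting $t\to\infty$ after $n\to\infty$ yields the claimed double limit.

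The one genuinely delicate step — already present in the proof of Theorem~\ref{thm:2c}(ii) — is producing a single CL solution $\mathbf{u}(t)$ that remains $\delta$-close to the whole tail of $\{\mathbf{u}_n(t)\}$ on $[0,T]$: Theorem~\ref{thm:2b} supplies this once the initial data $\mathbf{u}_n(0)$ are known to converge in $L^2(I)$, for instance when they are the discretizations \eqref{eqn:icd0} of a fixed CL datum near $\bar{\mathbf{u}}(0)$, and otherwise one has to absorb the residual mismatch into the $\epsilon$-budget while keeping the base point $\boldsymbol\theta$ consistent across the two triangle inequalities; for each fixed $T$ the appeal to stability could alternatively be replaced by the continuous dependence in Theorem~\ref{thm:2a}, at the price of letting $\delta$ depend on $T$ as well as on $\epsilon$. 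Checking that the translate chosen at $t=0$ still realizes, up to the allotted error, the minimum over $\theta$ for all $t\in[0,T]$, and that the interchange $\lim_{t\to\infty}\lim_{n\to\infty}$ in the asymptotic part is legitimate, are the remaining routine verifications.
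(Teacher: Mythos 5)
Your proposal is correct and coincides with the paper's own treatment: the paper states Corollary~\ref{cor:2a} as an immediate consequence of the proof of Theorem~\ref{thm:2c}(ii), obtained by dropping the reference solution $\bar{\mathbf{u}}_n(t)$ and hence the terms $\|\bar{\mathbf{u}}_n(t)-\bar{\mathbf{u}}(t)\|$ from the triangle inequalities, exactly as you describe. Your additional remarks on choosing the shadowing CL solution and on the order of limits are sensible but concern steps already present in (and inherited from) the proof of Theorem~\ref{thm:2c}(ii).
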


{\color{black}
Corollary~\ref{cor:2a} says that if $\U$ is (asymptotically) stable,
 then it behaves as if it is an (asymptotically) stable family of solutions in the KM \eqref{eqn:dsys}.}
We prove the following theorem.
 
\begin{thm}
\label{thm:2d}
Suppose that the hypothesis of Theorem~$\ref{thm:2c}$ holds.
Then the following hold$:$
\begin{enumerate}
\setlength{\leftskip}{-1.8em}
\item[\rm(i)]
If $\U_n$ is unstable for $n>0$ sufficiently large
 and {\color{black}
 no stable family of solutions to the KM \eqref{eqn:dsys}
 converges to $\U$ as $n\to\infty$,} then $\U$ is unstable$;$
\item[\rm(ii)]
If $\U$ is unstable, then so is $\U_n$ for $n>0$ sufficiently large.
\end{enumerate}
\end{thm}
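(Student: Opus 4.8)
The plan is to obtain part~(ii) from the approximation Theorem~\ref{thm:2b} by carrying an escaping trajectory of the CL \eqref{eqn:csys} over to the KM \eqref{eqn:dsys}, and to prove part~(i) by contraposition, using the gradient structure of the KM in the rotating frame to locate a stable family of solutions near $\U_n$ that converges to $\U$.

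\emph{Part~(ii).} Suppose $\U$ is unstable: there are $\epsilon_0>0$, solutions $\mathbf{v}^k(t)$ of \eqref{eqn:csys}, constants $\delta_k\downarrow0$ and times $t_k>0$ with $\min_{\theta}\|\mathbf{v}^k(0)-\bar{\mathbf{u}}(0)-\boldsymbol\theta\|<\delta_k$ and $\min_{\theta}\|\mathbf{v}^k(t_k)-\bar{\mathbf{u}}(t_k)-\boldsymbol\theta\|\ge\epsilon_0$. Given $\delta'>0$, I would take $k$ with $\delta_k<\tfrac13\delta'$, set $T_0=t_k$, and let $\mathbf{v}_n^k(t)$ solve \eqref{eqn:dsys} with $v_{n,i}^k(0)=n\int_{I_i^n}v^k(0,x)\,\d x$ (cf.\ \eqref{eqn:icd0}). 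By Theorem~\ref{thm:2b} on $[0,T_0]$ and hypothesis \eqref{eqn:2c}, for $n$ large both $\|\mathbf{v}_n^k(t)-\mathbf{v}^k(t)\|$ and $\|\bar{\mathbf{u}}_n(t)-\bar{\mathbf{u}}(t)\|$ are $<\min\{\tfrac13\delta',\tfrac14\epsilon_0\}$ on $[0,T_0]$; since $\mathbf{w}\mapsto\min_{\theta}\|\mathbf{w}-\boldsymbol\theta\|$ is $1$-Lipschitz on $L^2(I)$, two triangle inequalities give $\min_{\theta}\|\mathbf{v}_n^k(0)-\bar{\mathbf{u}}_n(0)-\boldsymbol\theta\|<\delta'$ and $\min_{\theta}\|\mathbf{v}_n^k(T_0)-\bar{\mathbf{u}}_n(T_0)-\boldsymbol\theta\|\ge\tfrac12\epsilon_0$, so $\U_n$ is unstable for $n$ large. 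One subtlety is that the $n$-threshold here depends on $\delta'$ through $T_0$; this is handled by observing that the same argument, read contrapositively, shows that if $\U_n$ were stable along a subsequence $n_k\to\infty$ then $\U$ would be stable, as in Theorem~\ref{thm:2c}(i).

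\emph{Part~(i).} By contraposition, assume $\U$ is stable; the aim is to produce a stable family of KM solutions converging to $\U$, contradicting the second hypothesis. I would use that, in the rotating frame, \eqref{eqn:dsys} is --- modulo the $\Sset^1$ phase-shift symmetry --- a gradient flow on the compact torus $\Tset^{n-1}$ with energy $E_n(v)=-\sum_i(\omega_i^n-\Omega_\D)v_i-\frac{K}{2n}\sum_{i,j}\cos(v_i-v_j)$, well defined on the quotient, and similarly \eqref{eqn:csys} reduces to a gradient flow with a continuum energy $\mathcal E$ satisfying $E_n\to\mathcal E$ in an appropriate sense. For a gradient flow an equilibrium is Lyapunov stable iff it is a local minimum of the energy, so stability of $\U$ says that $[\U]$, the image of $\U$ in the reduced CL phase space, is a local minimum of $\mathcal E$. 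Comparing $E_n$ and $\mathcal E$ on a small closed ball about $[\U]$ then produces, for $n$ large, an interior minimiser $q_n$ of $E_n$ --- that is, a stable equilibrium of the reduced KM with $q_n\to[\U]$ --- whose $\Sset^1$-orbit is a stable family of solutions of \eqref{eqn:dsys} converging to $\U$. Since $\U_n$ is unstable by the first hypothesis, $[\U_n]$ is not a local minimum of $E_n$, hence $q_n\neq[\U_n]$ and this stable family is distinct from $\U_n$ --- the required contradiction. (Equivalently, one may argue softly: by Theorem~\ref{thm:2c}(ii) one has, for $n$ large, arbitrarily long finite-time trapping near $\U_n$, which together with compactness of $\Tset^n$ and the gradient structure can be upgraded to a trapping region $O_n$ with $\overline{O_n}\subset B_{\epsilon_0}(\U_n)$, so that $\bigcap_{t\ge0}\phi_t^n(\overline{O_n})$ is a Lyapunov-stable attractor containing $\U_n$, strictly larger than $\U_n$, and converging to $\U$.)

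\emph{Main obstacle.} The delicate step is the one in part~(i): passing from stability of the CL family to an honest stable object in the KM. In the gradient version this means excluding that $[\U]$ is a degenerate (non-strict) local minimum of $\mathcal E$, for which the nearby minimiser $q_n$ of $E_n$ might fail to exist or to converge to $[\U]$, and making precise the convergence $E_n\to\mathcal E$ in a topology strong enough to compare minimisers; in the soft version it is the upgrade of the finite-time trapping of Theorem~\ref{thm:2c}(ii) --- whose radius shrinks as the time horizon grows --- to an all-time trapping region, which genuinely uses the (near-)gradient character of \eqref{eqn:dsys} to rule out recurrence out of a neighbourhood of the maximal invariant set. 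The $\delta'$-dependence of the threshold in part~(ii) and the bookkeeping of the $\Sset^1$-reduction are comparatively minor. Part~(ii) is otherwise a routine triangle-inequality argument built on Theorem~\ref{thm:2b}.
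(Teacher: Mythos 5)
Your part~(ii) is correct, and, as you yourself concede at the end of it, the clean way around the $\delta'$-dependence of the $n$-threshold is exactly the one-line argument the paper uses: part~(ii) is the contrapositive of Theorem~\ref{thm:2c}(i). The direct transport of an escaping CL trajectory into the KM is what the paper does for Theorem~\ref{thm:2e} (where no $\bar{\mathbf{u}}_n$ is assumed), not for this part.

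Part~(i) is where you genuinely diverge, and where there is a gap. The paper proves (i) \emph{directly}: the two hypotheses together supply, for any $\epsilon,\delta,T>0$ and $n$ large, a KM solution with $\|\mathbf{u}_n(0)-\bar{\mathbf{u}}_n(0)\|<\tfrac{1}{3}\delta$ and $\min_{\theta}\|\mathbf{u}_n(T)-\bar{\mathbf{u}}_n(T)-\boldsymbol\theta\|>3\epsilon$; the hypothesis that no stable family of KM solutions converges to $\U$ is used precisely to ensure this escape occurs within a bounded time horizon (otherwise \eqref{eqn:thm2d} would fail, as the remark following the theorem notes), and then Theorem~\ref{thm:2b} plus triangle inequalities push the escaping orbit down to the CL. No gradient structure is needed. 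Your contrapositive route must instead manufacture, from stability of $\U$ in the CL, an honest stable family of KM solutions converging to $\U$ --- essentially a converse of Theorem~\ref{thm:2c}(i), which the paper pointedly does not claim: the remark after Theorem~\ref{thm:2c} and Section~7 show that stability of $\U$ does not pass down ($\U$ of \eqref{eqn:csol0} is asymptotically stable while unstable families $\U_n^\sigma$ converge to it), so the stable family has to be found elsewhere in the energy landscape. Your mechanism for finding it requires (a) the equivalence ``Lyapunov stable $\Leftrightarrow$ local minimum'' for the $L^2(I)$-gradient flow, which is not standard in infinite dimensions and is complicated by the identification of solutions agreeing off a null set; (b) strictness of that minimum on the boundary of the comparison ball, without which the interior minimizer $q_n$ of $E_n$ need not exist or converge; and (c) in the soft variant, the upgrade from the finite-time trapping of Theorem~\ref{thm:2c}(ii), whose radius shrinks with the horizon, to all-time trapping. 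You name all three obstacles but resolve none, so part~(i) as proposed is an outline rather than a proof; the repair is to drop the contraposition and argue forward as above.
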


\begin{proof}
Part~(ii) follows from the contraposition of Theorem~\ref{thm:2c}(i).
So we only give a proof part (i).

Suppose that $\U_n$ is unstable {\color{black}for $n>0$ sufficiently large
 and no stable family of solutions to the KM \eqref{eqn:dsys}
 converges to $\U$ as $n\to\infty$.}
Then for any $\epsilon,\delta,T>0$
 there exists a solution $\mathbf{u}_n(t)$ to the KM \eqref{eqn:dsys} such that
\[
\|\mathbf{u}_n(0)-\bar{\mathbf{u}}_n(0)\|<\tfrac{1}{3}\delta
\]
and
\begin{equation}
{\color{black}\min_{\theta\in\Sset^1}}
 \|\mathbf{u}_n(T)-\bar{\mathbf{u}}_n(T)-\boldsymbol\theta\|>3\epsilon.
\label{eqn:thm2d}
\end{equation}
Let $\mathbf{u}(t)$ be a solution to the CL \eqref{eqn:csys} with
\[
\|\mathbf{u}(0)-\mathbf{u}_n(0)\|<\tfrac{1}{3}\delta.
\]
By Theorem~\ref{thm:2b},
 for any $\epsilon>0$ there exists an integer $N>0$ such that
\[
\|\mathbf{u}(T)-\mathbf{u}_n(T)\|<\epsilon
\]
for $n>N$.
Moreover, by our assumption, we have
\[
\|\bar{\mathbf{u}}(0)-\bar{\mathbf{u}}_n(0)\|,
\|\bar{\mathbf{u}}(T)-\bar{\mathbf{u}}_n(T)\|<\min(\epsilon,\tfrac{1}{3}\delta)
\]
for $n>0$ sufficiently large.
Hence,
\[
\|\mathbf{u}(0)-\bar{\mathbf{u}}(0)\|
<\|\mathbf{u}(0)-\mathbf{u}_n(0)\|+\|\mathbf{u}_n(0)-\bar{\mathbf{u}}_n(0)\|
+\|\bar{\mathbf{u}}_n(0)-\bar{\mathbf{u}}(0)\|<\delta
\]
and
\begin{align*}
&
\|\mathbf{u}(T)-\bar{\mathbf{u}}(T)-\boldsymbol\theta\|\\
&
>\|\mathbf{u}_n(T)-\bar{\mathbf{u}}_n(T)-\boldsymbol\theta\|
-\|\mathbf{u}(T)-\mathbf{u}_n(T)\|-\|\bar{\mathbf{u}}_n(T)-\bar{\mathbf{u}}(T)\|\\
&
>\|\mathbf{u}_n(T)-\bar{\mathbf{u}}_n(T)-\boldsymbol\theta\|-2\epsilon,
\end{align*}
which yields
\[
{\color{black}\min_{\theta\in\Sset^1}}\|\mathbf{u}(T)-\bar{\mathbf{u}}(T)-\boldsymbol\theta\|>\epsilon.
\]
Thus, we conclude that the family $\U$  is unstable.
\end{proof}

\begin{rmk}\
\begin{enumerate}
\setlength{\leftskip}{-1.8em}
\item[\rm(i)]
In the proof of Theorem~{\rm\ref{thm:2d}(i)},
 if $\U_n$ converged to a stable family of solutions in the sense of $L^2(I)$
 as $n\to\infty$,
 Eq.~\eqref{eqn:thm2d} would not hold.
\item[\rm(ii)]
Only under the hypothesis of Theorem~{\rm\ref{thm:2c}},
 $\U$  is not necessarily unstable
  even if $\U_n$ is unstable for $n>0$ sufficiently large.
Moreover, $\U$ may be asymptotically stable
 even if $\U_n$ is unstable for $n>0$ sufficiently large.
This happens for the KM \eqref{eqn:dsys} and CL \eqref{eqn:csys}
 $($see Section~$7)$.
\end{enumerate}
\end{rmk}

{\color{black}
Without assuming the existence
 of the solution $\bar{\mathbf{u}}_n(t)$ to the KM \eqref{eqn:dsys}
 satisfying \eqref{eqn:2c} in Theorem~\ref{thm:2d}(ii),
 we obtain the following.

\begin{thm}
\label{thm:2e}
If $\U$ is unstable,
 then for any $\epsilon,\delta>0$ there exists $T>0$ such that for $n>0$ sufficiently large
\[
\min_{\theta\in\Sset^1}\|\mathbf{u}_n(T)-\bar{\mathbf{u}}(T)-\boldsymbol{\theta}\|>\epsilon,
\]
where $\mathbf{u}_n(t)$ is a solution to the KM \eqref{eqn:dsys} satisfying
\[
\min_{\theta\in\Sset^1}\|\mathbf{u}_n(0)-\bar{\mathbf{u}}(0)-\boldsymbol{\theta}\|<\delta.
\]
\end{thm}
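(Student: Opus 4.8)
The plan is to run the construction behind Corollary~\ref{cor:2a} in reverse. In that corollary, \emph{stability} of $\U$ for the CL~\eqref{eqn:csys} was carried over to the KM~\eqref{eqn:dsys} by comparing a nearby KM solution with a nearby CL solution through Theorem~\ref{thm:2b}; here we carry over \emph{instability} by the same mechanism. The instability hypothesis on $\U$ produces a single CL solution that starts arbitrarily close to $\U$ and at some later time has left a fixed neighborhood of it; we sample its initial datum to obtain KM initial data, use Theorem~\ref{thm:2b} to see that the associated KM solution stays $\epsilon$-close to that CL solution on the relevant (fixed) time interval, and then apply two triangle inequalities to conclude that this KM solution is $\delta$-close to $\U$ at $t=0$ while being $\epsilon$-far from it at the escape time.

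In detail, fix $\delta>0$, let $\epsilon_0>0$ be the ``escape radius'' furnished by the instability of $\U$, and assume (which is all that is used below) that $0<\epsilon<\tfrac12\epsilon_0$. First, the definition of instability of $\U$ for~\eqref{eqn:csys}, applied with tolerance $\tfrac12\delta$, gives a solution $\mathbf{v}(t)$ of~\eqref{eqn:csys} (global by Theorem~\ref{thm:2a}) and a time $T>0$ with $\min_{\theta\in\Sset^1}\|\mathbf{v}(0)-\bar{\mathbf{u}}(0)-\boldsymbol\theta\|<\tfrac12\delta$ and $\min_{\theta\in\Sset^1}\|\mathbf{v}(T)-\bar{\mathbf{u}}(T)-\boldsymbol\theta\|\ge\epsilon_0>2\epsilon$; note that $T$ is fixed at this stage and does not depend on $n$. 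Next, put $u_i^n(0)=n\int_{I_i^n}v(0,x)\,\d x$ as in~\eqref{eqn:icd0}, so that $\|\mathbf{u}_n(0)-\mathbf{v}(0)\|\to0$ as $n\to\infty$, and let $\mathbf{u}_n(t)$ be the resulting solution of~\eqref{eqn:dsys}. By Theorem~\ref{thm:2b} there is $N$ such that $\|\mathbf{u}_n(0)-\mathbf{v}(0)\|<\tfrac12\delta$ and $\|\mathbf{u}_n(T)-\mathbf{v}(T)\|<\epsilon$ whenever $n>N$. Finally, the elementary bounds $\min_\theta\|A-\boldsymbol\theta\|\le\|A-B\|+\min_\theta\|B-\boldsymbol\theta\|$ and $\min_\theta\|A-\boldsymbol\theta\|\ge\min_\theta\|B-\boldsymbol\theta\|-\|A-B\|$, applied at $t=0$ and at $t=T$ respectively with $A=\mathbf{u}_n(t)-\bar{\mathbf{u}}(t)$ and $B=\mathbf{v}(t)-\bar{\mathbf{u}}(t)$, yield for all $n>N$
\[
\min_{\theta\in\Sset^1}\|\mathbf{u}_n(0)-\bar{\mathbf{u}}(0)-\boldsymbol\theta\|<\delta,
\qquad
\min_{\theta\in\Sset^1}\|\mathbf{u}_n(T)-\bar{\mathbf{u}}(T)-\boldsymbol\theta\|>\epsilon,
\]
which is the assertion.

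We do not anticipate a genuinely hard analytic step once Theorems~\ref{thm:2a} and~\ref{thm:2b} are in hand; the delicate points are the order of the quantifiers. Instability only guarantees escape past the \emph{fixed} radius $\epsilon_0$, which is why the conclusion is in substance a statement about small $\epsilon$, and why the $\mathbf{u}_n(t)$ we exhibit is \emph{some} KM solution with $\delta$-close initial data and not every such solution, since a KM solution lying on a family $\U_n$ that converges to $\U$ (for instance a nearby synchronized solution of the form~\eqref{eqn:dsol}) generally does not escape; this mirrors the restriction in Theorem~\ref{thm:2d}(i). Likewise, the escape time $T$ must be extracted from the CL instability \emph{before} $n$ enters the argument, so that $T$ is $n$-independent and Theorem~\ref{thm:2b} can be invoked on the fixed compact interval $[0,T]$; inverting this order would break the proof. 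Finally, if one additionally posited a KM solution $\bar{\mathbf{u}}_n(t)$ shadowing $\bar{\mathbf{u}}(t)$ in the sense of~\eqref{eqn:2c}, the same estimates would reproduce Theorem~\ref{thm:2d}(ii), so Theorem~\ref{thm:2e} is the ``initial-data-free'' counterpart of that result, in the way Corollary~\ref{cor:2a} is the initial-data-free counterpart of Theorem~\ref{thm:2c}(ii).
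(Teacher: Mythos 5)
Your proposal is correct and follows essentially the same route as the paper's proof: extract an escaping CL solution from the instability of $\U$, fix the escape time $T$ before $n$ enters, discretize its initial datum via \eqref{eqn:icd0}, apply Theorem~\ref{thm:2b} on $[0,T]$, and conclude with two triangle inequalities. Your explicit restriction to $\epsilon<\tfrac12\epsilon_0$ (the escape radius furnished by instability) is in fact a more careful reading of the quantifiers than the paper's own write-up, which tacitly asserts escape past $2\epsilon$ for arbitrary $\epsilon$.
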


\begin{proof}
Since $\U$ is unstable, for any $\delta,\epsilon>0$ there exists a solution $\mathbf{u}(t)$
such that
\[
\min_{\theta\in\Sset^1}\|\mathbf{u}(0)-\bar{\mathbf{u}}(0)-\boldsymbol{\theta}\|<\tfrac{1}{2}\delta,\quad
\min_{\theta\in\Sset^1}\|\mathbf{u}(T)-\bar{\mathbf{u}}(T)-\boldsymbol{\theta}\|>2\epsilon.
\]
for some $T>0$.
Let $\mathbf{u}(t)$ is a solution to the CL \eqref{eqn:csys} such that
\[
\|\mathbf{u}_n(0)-\mathbf{u}(0)\|=0.
\]
By Theorem~\ref{thm:2b}, for any $\epsilon>0$ there exists $N>0$ such that if $n>N$, then
\[
\|\mathbf{u}_n(T)-\mathbf{u}(T)\|<\epsilon.
\]
Hence, if $n>0$ is sufficiently large, then
\[
\|\mathbf{u}_n(0)-\mathbf{u}(0)\|<\tfrac{1}{2}\delta,
\]
so that
\[
\min_{\theta\in\Sset^1}\|\mathbf{u}_n(0)-\bar{\mathbf{u}}(0)-\boldsymbol{\theta}\|
 <\min_{\theta\in\Sset^1}\|\mathbf{u}(0)-\bar{\mathbf{u}}(0)-\boldsymbol{\theta}\|
 +\|\mathbf{u}_n(0)-\mathbf{u}(0)\|<\delta,
\]
and
\[
\min_{\theta\in\Sset^1}\|\mathbf{u}_n(T)-\bar{\mathbf{u}}(T)-\boldsymbol{\theta}\|
>\min_{\theta\in\Sset^1}\|\mathbf{u}(T)-\bar{\mathbf{u}}(T)-\boldsymbol{\theta}\|
 -\|\mathbf{u}_n(T)-\mathbf{u}(T)\|>\epsilon.
\]
This completes the proof.
\end{proof}

Theorem~\ref{thm:2e} says that
 if $\U$ is unstable, then it behaves as if it is an unstable family of solutions in the KM \eqref{eqn:dsys}.}


\section{Case $n=3$}

We now concentrate on the KM \eqref{eqn:dsys}
 and begin with the case of $n=3$:
\begin{equation}
\begin{split}
\dot{u}_1=&-\nu+\frac{K}{3}(\sin(u_2-u_1)+\sin(u_3-u_1)),\\
\dot{u}_2=&\frac{K}{3}(\sin(u_1-u_2)+\sin(u_3-u_2)),\\
\dot{u}_3=&\nu+\frac{K}{3}(\sin(u_1-u_3)+\sin(u_2-u_3)),
\end{split}
\label{eqn:dsys3}
\end{equation}
where $\nu=\tfrac{1}{3}a$.
For the KM \eqref{eqn:dsys} with $n=3$,
 a more general case in which the natural frequencies are not evenly spaced
 was discussed in \cite{MPBT04,MPT05a,MPT05b}.
Condition~\eqref{eqn:CD} becomes
\begin{equation}
C_\D=\frac{1}{3}+\frac{2}{3}\sqrt{1-\left(\frac{a}{3KC_\D}\right)^2}.
\label{eqn:CD3}
\end{equation}
As stated in Section~1,
 if there exists a constant $C_\D$ satisfying \eqref{eqn:CD3},
 then the system \eqref{eqn:dsys3} has the synchronized solutions
\begin{equation}
u_1=-\arcsin\left(\frac{a}{3KC_\D}\right)+\theta,\quad
u_2=\theta,\quad
u_3=\arcsin\left(\frac{a}{3KC_\D}\right)+\theta,
\label{eqn:dsol3}
\end{equation}
where $\theta\in\Sset^1$ is an arbitrary constant.
Moreover, $C_\D\ge\frac{1}{3}$ and 
\[
\frac{K}{a}\ge\kappa_0:=\frac{16\sqrt{2/(15+\sqrt{33})}}{4+\sqrt{34-2\sqrt{33}}}=0.56812\ldots
\]
since by \eqref{eqn:CD3}
\[
\frac{a}{K}=\psi_3\left(\frac{a}{3KC_\D}\right),\quad
\psi_3(\xi)=\xi+2\xi\sqrt{1-\xi^2},
\]
and $\psi_3(\xi)$ has a unique extremum (maximum) $\kappa_0^{-1}$ at
\[
\xi={
\sqrt{\frac{15+\sqrt{33}}{32}}}
\]
on $(0,1)$.
In particular, $K/a=1$ and $\arcsin(a/3KC_\D)=\tfrac{1}{2}\pi$ when $C_\D=\tfrac{1}{3}$, and
\begin{align*}
&
C_\D=C_{\D 0}:=\frac{1}{3}+\frac{1}{12}{(\sqrt{33}-1)}=0.72871\ldots,\\
&
\arcsin\left(\frac{a}{3KC_\D}\right)
 =\arcsin\sqrt{\frac{15+\sqrt{33}}{32}}=0.93592\ldots
\end{align*}
when $K/a=\kappa_0$.
See Fig.~\ref{fig:3a} for the dependence of $C_\D$ on $K/a$ in \eqref{eqn:CD3}.
  
Let $v_1=u_1-u_2$ and $v_2=u_3-u_2$.
We rewrite \eqref{eqn:dsys3} as
\begin{equation}
\begin{split}
\dot{v}_1=&-\nu-\frac{K}{3}(2\sin v_1+\sin v_2-\sin(v_2-v_1)),\\
\dot{v}_2=&\nu-\frac{K}{3}(\sin v_1+2\sin v_2+\sin(v_2-v_1)).
\end{split}
\label{eqn:dsys3r}
\end{equation}
The equilibria in \eqref{eqn:dsys3r} satisfy
\[
\sin v_1+\sin v_2=0,\quad
-\sin v_1+\sin v_2+2\sin(v_2-v_1)=\frac{6\nu}{K},
\]
i.e.,
\begin{equation}
v_1=-v_2,\quad
\sin v_2(1+2\cos v_2)=\frac{a}{K}
\label{eqn:dsol3a}
\end{equation}
or
\begin{equation}
v_1=v_2-\pi,\quad
v_2=\arcsin\left(\frac{a}{K}\right)\mbox{ or }\pi-\arcsin\left(\frac{a}{K}\right).
\label{eqn:dsol3b}
\end{equation}
The Jacobian matrix for the vector field of \eqref{eqn:dsys3r} at the equilibria
 is computed as
\[
A=-\frac{K}{3}\begin{pmatrix}
2\cos v_1+\cos(v_2-v_1) & {\cos v_2}-\cos(v_2-v_1)\\
\cos v_1-\cos(v_2-v_1) & 2\cos v_2+\cos(v_2-v_1)
\end{pmatrix}.
\]
Hence, $A$ only has eigenvalues with negative real parts
 and the equilibria are asymptotically stable if
\begin{equation}
\begin{split}
&
-\cos v_1-\cos v_2{-}\cos(v_2-v_1)<0\\
&
\mbox{and}\quad
\cos v_1\cos v_2+(\cos v_1+\cos v_2)\cos(v_2-v_1)>0,
\end{split}
\label{eqn:s3}
\end{equation}
and it has an eigenvalue with positive real parts and they are unstable if
\begin{equation}
\begin{split}
&
-\cos v_1-\cos v_2{-}\cos(v_2-v_1)>0\\
&
\mbox{or}\quad
\cos v_1\cos v_2+(\cos v_1+\cos v_2)\cos(v_2-v_1)<0
\end{split}
\label{eqn:u3}
\end{equation}
Solutions to \eqref{eqn:dsys3} corresponding to these equilibria in \eqref{eqn:dsys3r}
 are given by
\[
u_1=v_1+\theta,\quad
u_2=\theta,\quad
u_3=v_2+\theta,
\]
where $\theta\in\Sset^1$ is an arbitrary constant, since
\[
\dot{u}_2=\frac{K}{3}(\sin v_1+\sin v_2)=0.
\]

\begin{figure}
\includegraphics[scale=0.48]{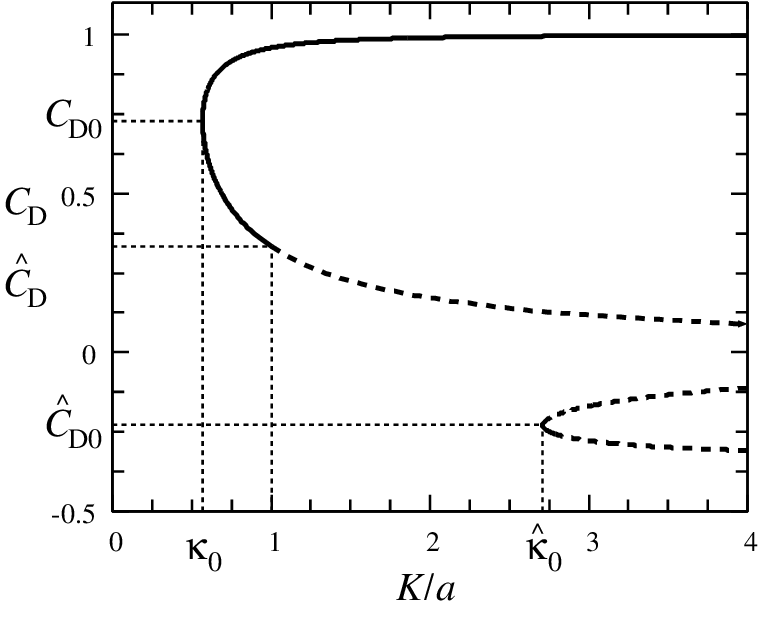}
\caption{Dependence of $C_\D$ and $\hat{C}_\D$ on $K/a$ in \eqref{eqn:CD3} and \eqref{eqn:hCD3}.
The solid and dashed lines represent it for $C_\D$ and $\hat{C}_\D$, respectively.
\label{fig:3a}}
\end{figure}

One of the equilibria satisfying \eqref{eqn:dsol3a} is given by
\begin{equation}
v_1=-\arcsin\left(\frac{a}{3KC_\D}\right),\quad
v_2=\arcsin\left(\frac{a}{3KC_\D}\right),
\label{eqn:dsol3r1}
\end{equation}
which corresponds to the synchronized solution \eqref{eqn:dsol3} in \eqref{eqn:dsys3},
 and the other is given by 
\begin{equation}
v_1=\arcsin\left(\frac{a}{3K\hat{C}_\D}\right)-\pi,\quad
v_2=-\arcsin\left(\frac{a}{3K\hat{C}_\D}\right)+\pi,
\label{eqn:dsol3r2}
\end{equation}
where
\begin{equation}
\hat{C}_\D=\frac{1}{3}-\frac{2}{3}\sqrt{1-\left(\frac{a}{3K\hat{C}_\D}\right)^2}.
\label{eqn:hCD3}
\end{equation}
We easily see that $\hat{C}_\D\le\tfrac{1}{3}$
 and that $K/a=1$ and  $v_2=-v_1=\pi/2$ when $\hat{C}_\D=\tfrac{1}{3}$.
Moreover, $\hat{C}_\D\to 0$ as $K/a\to\infty$, and
\[
\frac{a}{K}\ge\hat{\kappa}_0:=-\frac{16\sqrt{2/(15-\sqrt{33})}}{4-\sqrt{34+2\sqrt{33}}}
=2.70996\ldots
\]
since by \eqref{eqn:hCD3}
\[
\frac{a}{K}=\hat{\psi}_3\left(\left|\frac{a}{3K\hat{C}_\D}\right|\right),\quad
\hat{\psi}_3(\xi)=\xi-2\xi\sqrt{1-\xi^2}
\]
and $\hat{\psi}_3(\xi)$ has a unique extreme (maximum) $\hat{\kappa}_0^{-1}$ at
\[
\xi=\sqrt{\frac{15-\sqrt{33}}{32}}
\]
on $(0,1)$.
In particular,
\begin{align*}
&
\hat{C}_\D=\hat{C}_{\D 0}:=\frac{1}{3}-\frac{1}{12}\sqrt{34+2\sqrt{33}}=-0.22871\ldots,\\
&
\arcsin\left(\frac{a}{3K\hat{C}_\D}\right)
 =-\arcsin\sqrt{\frac{15-\sqrt{33}}{32}}=-0.56782\ldots
\end{align*}
when $K/a=\hat{\kappa}_0$.
See Fig.~\ref{fig:3a} for the dependence of $\hat{C}_\D$ on $K/a$ in \eqref{eqn:hCD3}.

On the other hand, the equilibria satisfying \eqref{eqn:dsol3b} exist when $K/a\ge 1$
 and coalesce at $K/a=1$.
Thus, a saddle-node bifurcation
 where the equilibria given by \eqref{eqn:dsol3r1} are born
 occurs at $K/a=\kappa_0$
 and a pitchfork bifurcation where the equilibria
 given by \eqref{eqn:dsol3b} and  \eqref{eqn:dsol3r2} are born occurs at $K/a=1$.
Moreover, the equilibria given by \eqref{eqn:dsol3r1}
 are asymptotically stable if $v_2=-v_1$ is less than
\[
v_0=\arcsin\sqrt{\frac{15+\sqrt{33}}{32}}=0.93592\ldots,
\]
and they are unstable if it is greater than  $v_0$,
 while the other equilibria are always unstable, as shown in Appendix~A.

\begin{figure}
\includegraphics[scale=0.58]{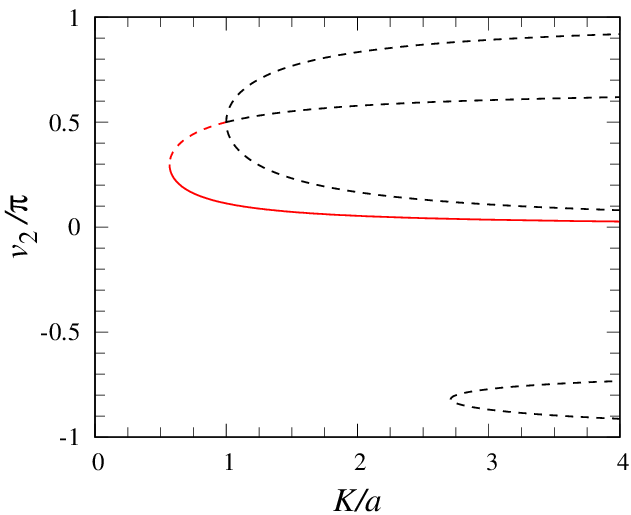}
\caption{Numerically computed bifurcation diagram of equilibria in \eqref{eqn:dsys3r}.
See the text for more details.
\label{fig:3b}}
\end{figure}

A numerically computed bifurcation diagram of equilibria in \eqref{eqn:dsys3r}
 is displayed in Fig.~\ref{fig:3b}.
Here the computer tool called \texttt{AUTO} \cite{DO12} was used.
The solid and dashed lines represent stable and unstable  equilibria, respectively.
The red and black lines, respectively, corresponds to the equilibrium given by \eqref{eqn:dsol3r1}
 and the others, which are given by \eqref{eqn:dsol3b} and \eqref{eqn:dsol3r2}. 
Only the equilibrium given by \eqref{eqn:dsol3r1} can be stable.


\section{Synchronized Solutions}

In this and the next two sections we consider the case
 in which the node number $n$ is odd and greater than three for \eqref{eqn:dsys},
 and discuss the existence of synchronized solutions and their bifurcations and stability.

Let $n=2n_0+1$ with $n_0>1$.
Equation~\eqref{eqn:dsys} becomes
\begin{equation}
\dot{u}_i=(i-n_0-1)\nu+\frac{K}{n}\sum_{j=1}^n\sin(u_j-u_i),\quad
i\in[n],
\label{eqn:dsyso}
\end{equation}
where $\nu=a/n$.
Condition~\eqref{eqn:CD} becomes
\begin{equation}
C_\D=\frac{1}{n}+\frac{2}{n}\sum_{j=1}^{n_0}\sqrt{1-\left(\frac{ja}{nKC_\D}\right)^2}.
\label{eqn:CDo}
\end{equation}
As stated in Section~1,
 if there exists a constant $C_\D$ satisfying \eqref{eqn:CDo},
 then the system \eqref{eqn:dsyso} has the synchronized solutions
\begin{equation}
u_i=\arcsin\left(\frac{(i-n_0-1)a}{nKC_\D}\right)+\theta,\quad
i\in[n],
\label{eqn:dsolo}
\end{equation}
where $\theta\in\Sset^1$ is an arbitrary constant.
Moreover,
\[
C_\D\ge C_{\D1}
 :=\frac{1}{n}+\frac{2}{n}\sum_{j=1}^{n_0}\sqrt{1-\left(\frac{j}{n_0}\right)^2},
\]
which is $\tfrac{1}{5}(1+\sqrt{3})=0.5464\ldots$ for $n=5$
 and $\tfrac{1}{55}(19+4\sqrt{6}+2\sqrt{21})=0.69023\ldots$ for $n=11$,
 and $K/a\ge\kappa_0\approx 0.60670\ldots$ for $n=5$
 and $0.62791\ldots$  for $n=11$, since by \eqref{eqn:CDo}
\[
\frac{a}{K}=\psi_n\left(\frac{n_0a}{nKC_\D}\right),\quad
\psi_n(\xi)=\frac{1}{n_0}\left(\xi+2\xi\sum_{j=1}^{n_0}\sqrt{1-\left(\frac{j}{n_0}\xi\right)^2}\right)
\]
and $\psi_n(\xi)$ has a unique extreme (maximum) $\kappa_0^{-1}$
 at $\xi=0.88209\ldots$ for $n=5$ and $0.94573\ldots$ for $n=11$.
We also have
\[
\frac{K}{a}=\kappa_1:=\frac{n_0}{n C_{\D 1}},
\]
which is $\sqrt{3}-1=0.73205\ldots$ for $n=5$
 and $25/(19+4\sqrt{6}+2\sqrt{21})= 0.65853\ldots$ for $n=11$ when $C_\D=C_{\D1}$,
 and $C_{\D 0}=0.74741\ldots$ for $n=5$ and $0.76543\ldots$ for $n=11$
 when $K/a=\kappa_0$.
See Fig.~\ref{fig:4a} for the dependence of $C_\D$ on $K/a$ in \eqref{eqn:CDo}
 for $n=5$ and $11$.
  
\begin{figure}
\includegraphics[scale=0.48]{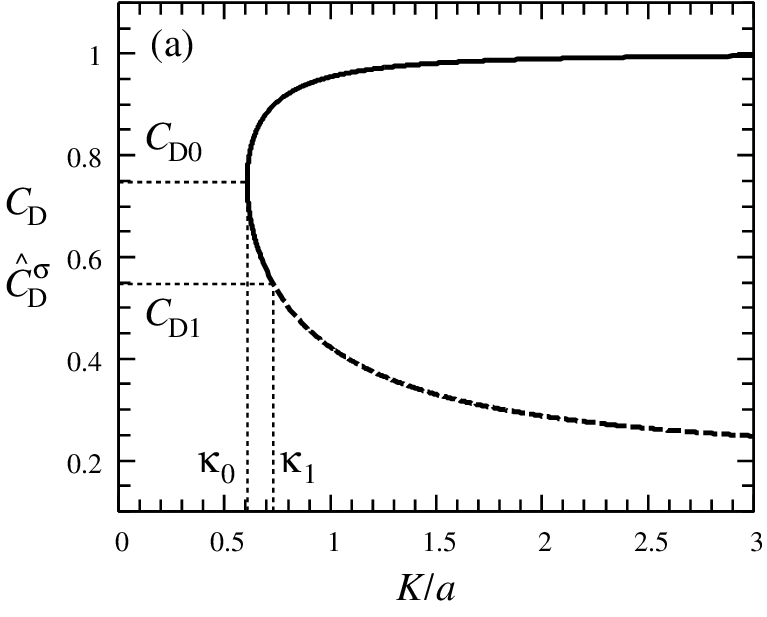}\
\includegraphics[scale=0.48]{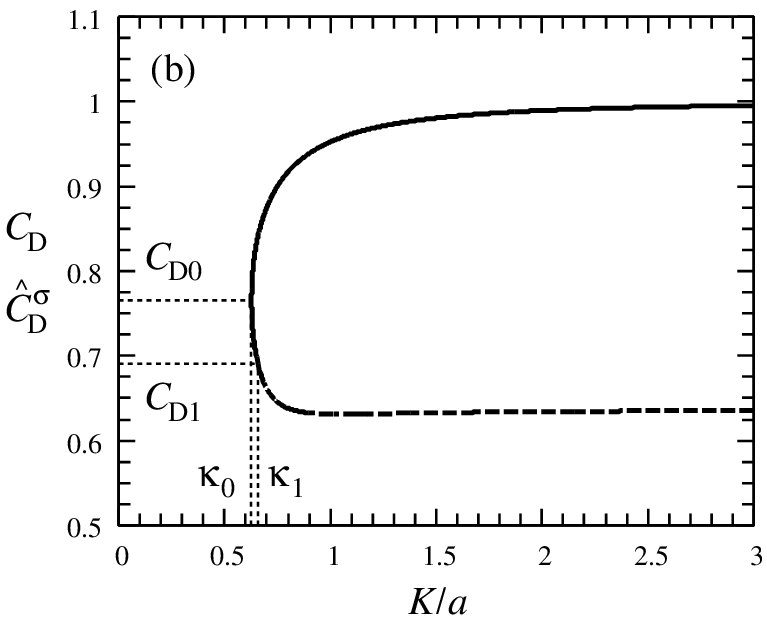}
\caption{Dependence of $C_\D$ and $\hat{C}_\D^\sigma$ on $K/a$ in \eqref{eqn:CDo}:
(a) $n=5$; (b) $n=11$.
The solid and dashed lines represent it for $C_\D$ and $\hat{C}_\D^\sigma$, respectively,
 where $\sigma=\{-1,1,1,-1\}$ and $\{-1,1,\ldots,1,-1\}$.
\label{fig:4a}}
\end{figure}

Let
\[
v_i=\begin{cases}
u_i-u_{n_0+1} &  \mbox{for $i\le n_0$};\\
u_{i+1}-u_{n_0+1} &  \mbox{for $n_0<i\le 2n_0$}.
\end{cases}
\]
We rewrite \eqref{eqn:dsyso} as
\begin{equation}
\begin{split}
\dot{v}_i
 =& (i-n_0-1)\nu-\frac{K}{n}\left(2\sin v_i+\sum_{j=1,j\neq i}^{2n_0}(\sin v_j-\sin(v_j-v_i))\right),\quad
 i\le n_0,\\
\dot{v}_i
 =& (i-n_0)\nu-\frac{K}{n}\left(2\sin v_i+\sum_{j=1,j\neq i}^{2n_0}(\sin v_j-\sin(v_j-v_i))\right),\quad
 n_0<i\le 2n_0,
\end{split}
\label{eqn:dsysor}
\end{equation}
where $\nu=a/n$.
We easily see that the system \eqref{eqn:dsysor} has an equilibrium
\begin{equation}
v_i=\begin{cases}
\displaystyle
\arcsin\left(\frac{(i-n_0-1)a}{nKC_\D}\right) &\mbox{for $i\le n_0$};\\[2.5ex]
\displaystyle
\arcsin\left(\frac{(i-n_0)a}{nKC_\D}\right) & \mbox{for $n_0<i\le 2n_0$},
\end{cases}
\label{eqn:dsolor0}
\end{equation}
which corresponds to the synchronized solution \eqref{eqn:dsolo} in \eqref{eqn:dsyso}
 and satisfies the relations
\begin{equation*}
v_i=-v_{2n_0-i+1}
\end{equation*}
and
\begin{equation*}
v_i\in[-\tfrac{1}{2}\pi,0),\quad
v_{n_0+i}\in(0,\tfrac{1}{2}\pi]
\end{equation*}
for $i\in[n_0]$.

Let $\sigma=\{\sigma_i\}_{i=1}^{2n_0}$ be a sequence of length $2n_0$
 with $\sigma_i\in\{-1,1\}$, $i\in[2n_0]$, and let
\[
\Sigma_{n_0}=\left\{\sigma=\{\sigma_i\}_{i=1}^{2n_0}\mid \sigma_i\in\{-1,1\},i\in[2n_0]\right\}.
\]
For each $\sigma\in\Sigma_{n_0}$,
 we define $\hat{C}_\D^\sigma$ such that it satisfies
\begin{align}
\hat{C}_\D^\sigma=\frac{1}{n}
&
+\frac{1}{n}\sum_{i=1}^{n_0}\sigma_i\sqrt{1-\left(\frac{(i-n_0-1)a}{nK\hat{C}_\D^\sigma}\right)^2}\notag\\
&
+\frac{1}{n}\sum_{i=n_0+1}^{2n_0}\sigma_i\sqrt{1-\left(\frac{(i-n_0)a}{nK\hat{C}_\D^\sigma}\right)^2},
\label{eqn:CDj}
\end{align}
and write
\begin{equation}
v_i^\sigma=\begin{cases}
\phi_i & \mbox{if $\sigma_i=1$;}\\
\pi-\phi_i & \mbox{if $\sigma_i=-1$ and $\phi_i>0$;}\\
-\phi_i-\pi & \mbox{if $\sigma_i=-1$ and $\phi_i<0$,}
\end{cases}
\label{eqn:dsolor}
\end{equation}
where
\begin{equation}
\phi_i=\begin{cases}
\displaystyle
\arcsin\left(\frac{(i-n_0-1)a}{nK\hat{C}_\D^\sigma}\right)
&
\mbox{for $i\le n_0$;}\\[3ex]
\displaystyle
\arcsin\left(\frac{(i-n_0)a}{nK\hat{C}_\D^\sigma}\right)
&
\mbox{for $n_0<i\le 2n_0$.}
\end{cases}
\label{eqn:phi}
\end{equation}
Note that $\phi_i\neq 0$, $i\in[2n_0]$.
We have the following.

\begin{thm}\
\label{thm:4a}
\begin{enumerate}
\setlength{\leftskip}{-1.8em}
\item[\rm(i)]
For each $\sigma\in\Sigma_{n_0}$,
 $v=v^\sigma\in\Tset^{2n_0}:=\prod_{i=1}^{2n_0}\Sset^1$
 gives an equilibrium in \eqref{eqn:dsysor}{,
 when $\hat{C}_\D^\sigma$ satisfies \eqref{eqn:CDj}.}
Moreover, no other equilibrium exists in \eqref{eqn:dsysor}.
\item[\rm(ii)]
Fix $K/a>0$ and $i\in[n_0]$.
If the equilibrium $v=v^\sigma$ with $\sigma_i=-1$ and $\sigma_{2n_0-i+1}=1$ exists,
 then so does $v=v^{\bar{\sigma}}$ with $\bar{\sigma}_i=1$, $\bar{\sigma}_{2n_0-i+1}=-1$
 and $\bar{\sigma}_j=\sigma_j$, $j\in[2n_0]\setminus\{i,2n_0-i+1\}$, and vice versa.
\end{enumerate}
\end{thm}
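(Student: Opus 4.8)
The plan is to reduce the equilibrium conditions of \eqref{eqn:dsysor} to a transparent algebraic form, identify \eqref{eqn:CDj} as its self-consistency equation, and then read off both assertions. First I would rewrite the equilibrium equations: expanding $\sin(v_j-v_i)$ by the addition formula and abbreviating $S:=\sum_{j=1}^{2n_0}\sin v_j$ and $T:=\sum_{j=1}^{2n_0}\cos v_j$, and using $2\sin v_i+\sum_{j\neq i}\sin v_j=\sin v_i+S$ and $\sum_{j\neq i}\sin(v_j-v_i)=S\cos v_i-T\sin v_i$, the condition $\dot v_i=0$ in \eqref{eqn:dsysor} becomes
\[
(i-n_0-1)\nu=\frac{K}{n}\bigl(\sin v_i\,(1+T)+S\,(1-\cos v_i)\bigr),\quad i\le n_0,
\]
with $(i-n_0)\nu$ on the left for $n_0<i\le 2n_0$. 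Summing all $2n_0$ of these equations and using $\sum_{i=1}^{n_0}(i-n_0-1)+\sum_{i=n_0+1}^{2n_0}(i-n_0)=0$ together with $(1+T)+(2n_0-T)=n$ forces $KS=0$, i.e.\ $S=0$ at every equilibrium; and the $i=1$ equation, whose left side is $-n_0\nu\neq 0$, forces $1+T\neq 0$.

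For the forward direction of (i), I would verify $v=v^\sigma$ by direct substitution. From \eqref{eqn:dsolor}--\eqref{eqn:phi} one checks, case by case on the three branches, that $\sin v_i^\sigma=\sin\phi_i$ and $\cos v_i^\sigma=\sigma_i\cos\phi_i$ for every $i$. Hence $S=\sum_i\sin\phi_i=0$ (again by the index-sum identity above, since $\sin\phi_i$ is a fixed multiple of $i-n_0-1$, resp.\ $i-n_0$) and $1+T=1+\sum_i\sigma_i\cos\phi_i=n\hat C_\D^\sigma$ precisely by \eqref{eqn:CDj}. Substituting into the reduced equation kills the $S$-term and leaves $(i-n_0-1)\nu=\tfrac{K}{n}\sin\phi_i\cdot n\hat C_\D^\sigma=K\hat C_\D^\sigma\sin\phi_i$, which holds by the definition of $\phi_i$; similarly for $n_0<i\le 2n_0$.

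For the converse in (i), take any equilibrium $v$. Since $S=0$ and $1+T\neq 0$, the reduced equations read $\sin v_i=(i-n_0-1)a/(K(1+T))$ for $i\le n_0$ and $\sin v_i=(i-n_0)a/(K(1+T))$ otherwise. Put $c:=(1+T)/n$ and $\sigma_i:=\operatorname{sign}(\cos v_i)$ (take $\sigma_i=1$ when $\cos v_i=0$; this is harmless since then the corresponding radical in \eqref{eqn:CDj} vanishes). With $\hat C_\D^\sigma:=c$, the quantity $\phi_i$ of \eqref{eqn:phi} satisfies $\sin\phi_i=\sin v_i$, while $\cos v_i=\sigma_i\sqrt{1-\sin^2 v_i}=\sigma_i\cos\phi_i$; comparison with \eqref{eqn:dsolor} shows $v_i=v_i^\sigma$. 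Finally $nc=1+T=1+\sum_i\sigma_i\sqrt{1-\sin^2\phi_i}$, which is exactly \eqref{eqn:CDj}, so $c$ is an admissible value of $\hat C_\D^\sigma$ and $v=v^\sigma$; this proves that no other equilibrium exists.

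For (ii), the key observation is that the reflection partner of $i\in[n_0]$ is $i':=2n_0-i+1>n_0$, and $i'-n_0=n_0+1-i=-(i-n_0-1)$, so the two radicals $\sqrt{1-((i-n_0-1)a/(nKc))^2}$ and $\sqrt{1-((i'-n_0)a/(nKc))^2}$ occurring in \eqref{eqn:CDj} are equal for every $c$. Hence when $\{\sigma_i,\sigma_{i'}\}=\{-1,1\}$ in either order, the contributions of positions $i$ and $i'$ to \eqref{eqn:CDj} cancel, so \eqref{eqn:CDj} for $\sigma$ and for $\bar\sigma$ are literally the same equation in $c$; thus $\hat C_\D^{\bar\sigma}$ satisfies \eqref{eqn:CDj} iff $\hat C_\D^\sigma$ does (with the same value), and part~(i) then yields the equilibrium $v^{\bar\sigma}$. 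Exchanging the roles of $\sigma$ and $\bar\sigma$ gives the converse. (Conceptually this reflects the symmetry $v_i\mapsto -v_{2n_0+1-i}$ of \eqref{eqn:dsysor}, which permutes the families $\{v^\sigma\}$ by reversing $\sigma$; part~(ii) is the local version that toggles a single mismatched pair.) The main obstacle is purely bookkeeping: getting the reduced form of \eqref{eqn:dsysor} correct, and in the converse noticing that summing the equilibrium equations annihilates $S$ and thereby unlocks the extraction of $\sigma$; after that everything is substitution and the identity $\sum(i-n_0-1)+\sum(i-n_0)=0$.
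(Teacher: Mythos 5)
Your proposal is correct and follows essentially the same route as the paper: summing the equilibrium equations to get $\sum_i\sin v_i=0$, deducing $1+\sum_i\cos v_i\neq 0$, solving each equation for $\sin v_i$ to identify $\hat C_\D^\sigma=(1+\sum_i\cos v_i)/n$ with \eqref{eqn:CDj} as the self-consistency condition, and for (ii) observing that the radicals at positions $i$ and $2n_0-i+1$ coincide so that \eqref{eqn:CDj} is unchanged under the swap, giving $\hat C_\D^\sigma=\hat C_\D^{\bar\sigma}$. The only cosmetic difference is that you bypass the paper's intermediate pairing lemma ($\sin w_i+\sin w_{i'}=0$) by expanding $\sin(v_j-v_i)$ directly, which is harmless.
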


\begin{proof}
Let $w=(w_1,\ldots,w_{2n_0})$ denote an equilibrium of \eqref{eqn:dsysor}.
{
We begin with the following two lemmas.} 
\begin{lem}
\label{lem:4a}
We have
\[
{\sum_{i=1}^{2n_0}}\sin w_i=0.
\]
\end{lem}

\begin{proof}
Since the right-hand sides of \eqref{eqn:dsysor} are zero when $v_i=w_i$, $i\in[2n_0]$,
 we sum up them to obtain the desired result.
\end{proof}

Let
\[
C_w={\sum_{i=1}^{2n_0}}\cos w_i,
\]
and let $i'=2n_0-i+1$ for $i\in[n_0]$.

\begin{lem}
\label{lem:a2}
If $C_w\neq -1$, then
\[
\sin w_i+\sin w_{i'}=0.
\]
\end{lem}

\begin{proof}
Adding two of the right-hand sides of \eqref{eqn:dsysor} for $i$ and $i'$
 and using Lemma~\ref{lem:4a}, we have
\[
(C_w+1)(\sin w_i+\sin w_{i'})=0.
\]
This yields the desired result.
\end{proof}

Using Lemma~\ref{lem:a2} and \eqref{eqn:dsysor}, we have
\begin{align*}
\begin{array}{ll}
\displaystyle
(i-n_0-1)\nu-\frac{K}{n}(C_w+1)\sin w_i=0
&
\mbox{for $i\le n_0$;}\\
\displaystyle
(i-n_0)\nu-\frac{K}{n}(C_w+1)\sin w_i=0
&
\mbox{for $n_0<i\le 2n_0$.}
\end{array}
\end{align*}
Obviously, $C_w\neq -1$ since the above relations do not hold otherwise.
Hence, $w_i=\phi_i$ or $w_i=\pi-\phi_i$, $ i\in[2n_0]$,
 where $\phi_i$ is given by \eqref{eqn:phi} with $\hat{C}_\D^\sigma=(C_w+1)/n$,
 since $\nu=a/n$.
This yields part~(i).
{
By \eqref{eqn:CDj}}
 we have $\hat{C}_\D^\sigma=\hat{C}_\D^{\bar{\sigma}}$, 
 from which part~(ii) follows. 
Thus, we complete the proof.
\end{proof}

In particular, $v=v^\sigma$ is the same as the equilibrium given by \eqref{eqn:dsolor0}
 and $\hat{C}_\D^\sigma=C_\D$ when $\sigma=\{1,\ldots,1\}$.
Moreover, $\hat{C}_\D^\sigma=\hat{C}_\D$ when $\sigma=\{-1,-1\}$
 in the case of $n=3$.
See Fig.~\ref{fig:4a} for the dependence of $\hat{C}_\D^\sigma$ on $K/a$
 when $\sigma=\{-1,1,1,-1\}$ and $\{-1,1,\ldots,1,-1\}$ for $n=5$ and $n=11$, respectively. 
Solutions to \eqref{eqn:dsyso} corresponding to these equilibria in \eqref{eqn:dsysor}
 are given by
\begin{equation}
u_{n_0+1}=\theta,\quad
u_i=
\begin{cases}
v_i+\theta & \mbox{for $i\le n_0$};\\
v_{i-1}+\theta  & \mbox{for $i\ge n_0+2$},
\end{cases}
\label{eqn:dsyssol}
\end{equation}
where $\theta\in\Sset^1$ is an arbitrary constant, since
\[
\dot{u}_{n_0+1}=\frac{K}{n}\sum_{i=1}^{2n_0}\sin{v_i}=0
\]
{\color{black}at the equilibria} by Lemma~\ref{lem:4a}.

For $\sigma\in\Sigma_{n_0}$, we define 
\[
\chi^\sigma(\xi)
 =\frac{\xi}{n_0}\left(1+\sum_{i=1}^{n_0}\sigma_i\sqrt{1-\left(\frac{i-n_0-1}{n_0}\xi\right)^2}
 +\sum_{i=n_0+1}^{2n_0}\sigma_i\sqrt{1-\left(\frac{i-n_0}{n_0}\xi\right)^2}\right),
\]
where $\xi\in[0,1]$.
Obviously, $\chi^\sigma(0)=0$ for any $\sigma\in\Sigma_{n_0}$.
In general, we can choose an integer $\ell\in[n_0]\cup\{0\}$
 and a monotonically increasing sequence of integers $\{i_k\}_{k=1}^{\ell}$
 with $i_k\in[n_0]$, $k\in[\ell]$, such that
 $\sigma_j=\sigma_{2n_0-j+1}$ if $j=i_k$
 and $\sigma_j\neq\sigma_{2n_0-j+1}$ if $j\neq i_k$ for any $k\in[\ell]${\color{black},
 where  if $\ell=0$, then $\sigma_j\neq\sigma_{2n_0-j+1}$ for any $j\in[n_0]$.}
So we write
\begin{align}
\chi^\sigma(\xi)
=&
\frac{\xi}{n_0}\left(1+\sum_{k=1}^{\ell}(\sigma_{i_k}+\sigma_{2n_0-i_k+1})
 \sqrt{1-\left(\frac{i_k-n_0-1}{n_0}\xi\right)^2}\right)\notag\\
=&\frac{\xi}{n_0}\left(1+2\sum_{k=1}^\ell\sigma_{i_k}\sqrt{1-\left(\frac{i_k-n_0-1}{n_0}\xi\right)^2}\right).
\label{eqn:chi}
\end{align}
Letting $\xi=n_0a/nK|\hat{C}_\D^\sigma|$, we rewrite the relation \eqref{eqn:CDj} as
\begin{equation}
\frac{a}{K}=\left|\chi^\sigma(\xi)\right|.
\label{eqn:con}
\end{equation}
Hence, we have $\xi>0$ for any $K>0$ since $\chi^\sigma(0)=0$.
In particular, $\psi_n(\xi)=\chi^\sigma(\xi)$ when $\sigma_j=1$, $j\in[2n_0]$.
From Theorem~\ref{thm:4a}(i)
 we immediately obtain the following corollary.
 
\begin{cor}
\label{cor:4a}
If $\xi\in(0,1]$ satisfies \eqref{eqn:con} for $\sigma\in\Sigma_{n_0}$ and $K>0$,
 then $v^\sigma$ given by \eqref{eqn:dsolor} with 
\begin{equation}
\phi_i=\begin{cases}
\displaystyle
{\pm}\arcsin\left(\frac{(i-n_0-1)\xi}{n_0}\right)
&
\mbox{for $i\le n_0;$}\\[2ex]
\displaystyle
{\pm}\arcsin\left(\frac{(i-n_0)\xi}{n_0}\right)
&
\mbox{for $n_0<i\le 2n_0,$}
\end{cases}
\label{eqn:cor4a}
\end{equation}
instead of \eqref{eqn:phi} is an equilibrium in \eqref{eqn:dsysor},
 {where the upper or lower sign is taken,
  depending on whether $\chi^\sigma(\xi)$ is positive or not.}
\end{cor}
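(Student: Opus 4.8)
\emph{Proof proposal.} The plan is to reduce the statement to Theorem~\ref{thm:4a}(i) by the change of variables $\xi=n_0a/nK|\hat{C}_\D^\sigma|$, but running it ``backwards'': instead of solving the implicit relation \eqref{eqn:CDj} for $\hat{C}_\D^\sigma$, I would treat the given $\xi\in(0,1]$ as the primitive datum, manufacture a corresponding $\hat{C}_\D^\sigma$, verify that it satisfies \eqref{eqn:CDj}, and then quote Theorem~\ref{thm:4a}(i). Recall that $\hat{C}_\D^\sigma$ enters Theorem~\ref{thm:4a}(i) only through the squared arguments inside \eqref{eqn:CDj} and through the arcsines in \eqref{eqn:phi}, so this reparametrisation is harmless.

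Concretely, I would set
\[
\hat{C}_\D^\sigma:=\frac{n_0}{n\xi}\,\chi^\sigma(\xi)
 =\frac{1}{n}\left(1+\sum_{i=1}^{n_0}\sigma_i\sqrt{1-\Big(\tfrac{(i-n_0-1)\xi}{n_0}\Big)^2}
 +\sum_{i=n_0+1}^{2n_0}\sigma_i\sqrt{1-\Big(\tfrac{(i-n_0)\xi}{n_0}\Big)^2}\right),
\]
the second equality being just the definition of $\chi^\sigma$. Since $\xi\in(0,1]$ and $|i-n_0-1|,|i-n_0|\le n_0$ on the relevant index ranges, all arguments of the square roots, and later of the arcsines in \eqref{eqn:cor4a}, lie in $[-1,1]$, so everything is well defined. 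Then, using the hypothesis \eqref{eqn:con}, i.e. $a/K=|\chi^\sigma(\xi)|$, a one-line computation gives
$a/(nK\hat{C}_\D^\sigma)=(\xi/n_0)\,\mathrm{sign}\,\chi^\sigma(\xi)$,
whence $\big((i-n_0-1)a/nK\hat{C}_\D^\sigma\big)^2=\big((i-n_0-1)\xi/n_0\big)^2$; substituting this back into the displayed definition of $\hat{C}_\D^\sigma$ shows that \eqref{eqn:CDj} is indeed satisfied, and the same computation gives $n_0a/nK|\hat{C}_\D^\sigma|=\xi$. Theorem~\ref{thm:4a}(i) then applies verbatim and yields that $v^\sigma$ of \eqref{eqn:dsolor} with $\phi_i$ from \eqref{eqn:phi} is an equilibrium of \eqref{eqn:dsysor}. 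Finally, since $\arcsin$ is odd and $a/(nK\hat{C}_\D^\sigma)=(\xi/n_0)\,\mathrm{sign}\,\chi^\sigma(\xi)$, the expression \eqref{eqn:phi} for $\phi_i$ collapses to exactly \eqref{eqn:cor4a}, with the upper sign when $\chi^\sigma(\xi)>0$ and the lower sign otherwise.

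I do not expect a genuine obstacle here; the argument is essentially bookkeeping around the substitution. The only point that needs attention — and the only place the hypotheses $\xi\le1$ and $K>0$ are actually used — is the sign. One must observe that $\mathrm{sign}\,\hat{C}_\D^\sigma=\mathrm{sign}\,\chi^\sigma(\xi)$, which is immediate from $\hat{C}_\D^\sigma=(n_0/n\xi)\chi^\sigma(\xi)$ since $\xi>0$, and that $\chi^\sigma(\xi)\neq0$: if it vanished, then \eqref{eqn:con} would force $a/K=0$, contradicting $a,K>0$. This is precisely what makes the dichotomy ``positive or not'' in the statement well posed, and it is where I would be most careful in writing out the details.
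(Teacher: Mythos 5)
Your proposal is correct and follows exactly the route the paper intends: the paper derives Corollary~\ref{cor:4a} ``immediately'' from Theorem~\ref{thm:4a}(i) via the substitution $\xi=n_0a/nK|\hat{C}_\D^\sigma|$, and your argument is precisely the bookkeeping that justifies this, including the sign analysis and the observation that $\chi^\sigma(\xi)\neq 0$ under \eqref{eqn:con}. No gaps.
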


\begin{rmk}\
\label{rmk:4a}
\begin{enumerate}
\setlength{\leftskip}{-1.8em}
\item[(i)]
It is clear that
 $v^\sigma\neq v^{\hat{\sigma}}$
 if $\sigma\neq\hat{\sigma}$ and $\xi\neq 1$.
\item[(ii)]
{Mirollo and Strogatz {\rm\cite{MS05}},
 and Verwoerd} and Mason {\rm\cite{VM08}} used
 an expression similar to the function $\chi^\sigma(\xi)$
 $($and equivalently to Eq.~\eqref{eqn:CDj}$)$
 to obtain equilibria in the KM \eqref{eqn:dsys}
 when $\omega_i$, $i\in[n]$, are not necessarily evenly spaced.
In particular, the former half of Theorem~{\rm\ref{thm:4a}(i)}
 is equivalent to a special case of Theorem~$2$ of {\rm\cite{VM08}}
 although it contains stronger statements.
\end{enumerate}
\end{rmk}


\section{Bifurcations}

We next discuss bifurcations of the equilibria in \eqref{eqn:dsysor}
 detected by Theorem~\ref{thm:4a} and equivalently by Corollary~\ref{cor:4a}.
The relation \eqref{eqn:con} gives a branch of equilibria when $K$ is taken as a control parameter.
So we have the following result.

\begin{thm}\
\label{thm:5a}
\begin{enumerate}
\setlength{\leftskip}{-1.8em}
\item[\rm(i)]
The equilibrium $v^\sigma$ suffers
 a supercritical $($resp. subcritical$)$ saddle-node bifurcation at
\begin{equation}
K=\frac{a}{\left|\chi^{\sigma}(\xi_0)\right|}
\label{eqn:thm5a0}
\end{equation}
in \eqref{eqn:dsysor} with $\xi=\xi_0$ on $(0,1)$
 if and only if $|\chi^\sigma(\xi)|$ has a local maximum $($resp. minimum$)$.
In particular, if
\begin{equation}
\sigma_1,\sigma_{2n_0}=1,\quad
\chi^\sigma(1)\ge 0
\label{eqn:thm5a1}
\end{equation}
or
\begin{equation}
\sigma_1,\sigma_{2n_0}=-1,\quad
\chi^\sigma(1)\le 0,
\label{eqn:thm5a2}
\end{equation}
then a supercritical saddle-node bifurcation occurs.
Moreover, if $\sigma_i=1$, $i\in[2n_0]$,
 then {$\chi^\sigma(\xi)$ has a unique local maximum and no local minimum, and
 $v^\sigma$ suffers only one saddle-node bifurcation.
\item[\rm(ii)]
Let $v^{\sigma^{\pm\pm}}$ be four equilibria in \eqref{eqn:dsysor} such that
\[
\sigma_i^{++}=\sigma_i^{--}=\sigma_i^{+-}=\sigma_i^{-+},\quad
i\neq 1,2n_0,
\]
and
\[
\sigma_1^{++},\sigma_1^{+-}=1,\quad
\sigma_1^{-+},\sigma_1^{--}=-1,\quad
\sigma_{2n_0}^{++},\sigma_{2n_0}^{-+}=1,\quad
\sigma_{2n_0}^{+-},\sigma_{2n_0}^{--}=-1.
\]
If $\chi^\sigma(1)\neq 0$, then a pitchfork bifurcation where
 $v^{\sigma^{++}}$ changes to $v^{\sigma^{--}}$
 and where $v^{\sigma^{+-}}$ and $v^{\sigma^{-+}}$ are born occurs at
\begin{equation}
K=\frac{a}{\left|\chi^{\sigma}(1)\right|},
\label{eqn:thm5a3}
\end{equation}
where any of $\sigma^{\pm\pm}$ may be chosen as $\sigma$.
Moreover, the bifurcation is super- or subcritical, depending on whether
\begin{equation}
\chi^\sigma(1)\frac{\d\chi^{\sigma}}{\d\xi}(1)\quad
\mbox{with $\sigma=\sigma^{+-}$ and $\sigma^{-+}$}
\label{eqn:thm5a4}
\end{equation}
is positive or negative, where any of $\sigma^{+-}$ and $\sigma^{-+}$
 may be chosen as $\sigma$.}
\end{enumerate}
\end{thm}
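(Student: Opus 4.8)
The plan is to derive both parts from the behavior of the scalar function $\chi^\sigma$ along the branches of equilibria produced by Corollary~\ref{cor:4a}: for each $\sigma$ the equilibria $v^\sigma$ of \eqref{eqn:dsysor} form a smooth immersed curve $\xi\mapsto(v^\sigma(\xi),K)$ with $K=a/|\chi^\sigma(\xi)|$ by \eqref{eqn:con}. Two facts are used throughout. First, \eqref{eqn:dsyso} is a gradient flow --- its right-hand side is $-\partial_{u_i}V$ with $V=-\sum_i\omega_i^nu_i-\tfrac{K}{2n}\sum_{i,j}\cos(u_j-u_i)$ on the cover $\Rset^n$, and since $\sum_i\omega_i^n=0$ the potential and the flow descend to the quotient by the $\Sset^1$-action --- so in the coordinates of \eqref{eqn:dsysor} the system is gradient up to a fixed positive-definite metric; hence every Jacobian $A$ at an equilibrium has real spectrum with $\dim\ker A$ equal to the nullity of the associated Hessian, which in particular rules out Hopf bifurcations. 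Second, a short computation from the relations obtained in the proof of Theorem~\ref{thm:4a} gives, at an equilibrium of \eqref{eqn:dsysor},
\[
A=-\frac Kn\bigl(\mathbf 1\,\mathbf p^\T-\mathbf p\,\mathbf p^\T-\mathbf q\,\mathbf q^\T+(1+C_v)\,\mathrm{diag}\,\mathbf p\bigr),\qquad
\mathbf p=(\cos v_i),\quad\mathbf q=(\sin v_i),\quad C_v=\textstyle\sum_i\cos v_i,
\]
with $\mathbf 1=(1,\dots,1)^\T$ and $\mathbf 1^\T\mathbf q=0$ by Lemma~\ref{lem:4a}; and differentiating $F(v^\sigma(\xi),K(\xi))\equiv0$ (with $F$ the field of \eqref{eqn:dsysor}) yields $A\,\d v^\sigma/\d\xi+\partial_KF\,\d K/\d\xi=0$, the bridge between the spectrum and the shape of $K=a/|\chi^\sigma(\xi)|$.

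For part~(i). Where $A$ is invertible the implicit function theorem continues the equilibrium uniquely in $K$, so a saddle--node can occur only where $A$ is singular; there the last identity forces $\d K/\d\xi=0$ once the transversality $\partial_KF\notin\mathrm{Range}\,A$ is checked, and since distinct $\sigma$ give distinct equilibria for $\xi\ne1$ (Remark~\ref{rmk:4a}(i)) no other branch meets $v^\sigma$ in $(0,1)$, so the point is necessarily a fold --- hence a saddle--node occurs only at a local extremum of $|\chi^\sigma|$. Conversely, solving $A\mathbf x=0$ by expressing $x_i$ through $\alpha=\mathbf p^\T\mathbf x$ and $\beta=\mathbf q^\T\mathbf x$ reduces it to a $2\times2$ linear system whose singular locus is exactly $\{\d|\chi^\sigma|/\d\xi=0\}$ and whose kernel is then one-dimensional, spanned by $\d v^\sigma/\d\xi$; with $\d^2|\chi^\sigma|/\d\xi^2\ne0$ this verifies the hypotheses of the saddle--node theorem. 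The bifurcation is super- or subcritical according to which side of $K_0=a/|\chi^\sigma(\xi_0)|$ the branch lies near $\xi_0$: at a local maximum of $|\chi^\sigma|$, $a/K=|\chi^\sigma(\xi)|$ is solvable near $\xi_0$ only for $K\ge K_0$, so the pair is born as $K$ increases --- supercritical; at a local minimum, subcritical. For \eqref{eqn:thm5a1}, the index $i=1$ then lies in the set $\{i_k\}$ of \eqref{eqn:chi} with $\sigma_{i_1}=1$, so $\chi^\sigma$ contains $\tfrac{2\xi}{n_0}\sqrt{1-\xi^2}$ with positive coefficient and $\d\chi^\sigma/\d\xi(\xi)\to-\infty$ as $\xi\to1^-$; with $\chi^\sigma(1)\ge0$ this makes $|\chi^\sigma|=\chi^\sigma>0$ and strictly decreasing near $\xi=1$, while $|\chi^\sigma(0)|=0$, so $|\chi^\sigma|$ has a positive interior maximum --- a supercritical saddle--node; \eqref{eqn:thm5a2} is symmetric. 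When $\sigma_i=1$ for all $i$, $\chi^\sigma=\psi_n$ is the sum of $\xi/n_0$ and the concave functions $\tfrac{2\xi}{n_0}\sqrt{1-(j\xi/n_0)^2}$, $j\in[n_0]$, hence concave on $[0,1]$, positive on $(0,1)$, with $\psi_n(0)=0$, $\psi_n'(0)>0$, $\psi_n'(1^-)=-\infty$; so $\psi_n=|\chi^\sigma|$ has a unique critical point on $(0,1)$, a maximum, and no minimum, and $v^\sigma$ undergoes exactly one (supercritical) saddle--node.

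For part~(ii). By \eqref{eqn:dsolor}--\eqref{eqn:cor4a}, at $\xi=1$ one has $\cos v_1=\cos v_{2n_0}=0$ independently of $\sigma_1,\sigma_{2n_0}$, the middle components take common finite values, and $\chi^{\sigma^{++}},\chi^{\sigma^{--}}$ differ from $\chi_0:=\chi^{\sigma^{+-}}=\chi^{\sigma^{-+}}$ only by the $i=1$ term $\pm\tfrac{2\xi}{n_0}\sqrt{1-\xi^2}$, which vanishes at $\xi=1$; so the four equilibria $v^{\sigma^{\pm\pm}}$ coincide at a common point $v^*$ there, all with $K_0=a/|\chi_0(1)|$, well defined since $\chi^\sigma(1)\ne0$. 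Taking $\chi_0(1)>0$ for definiteness (the opposite sign is identical with $v_1^*=\tfrac12\pi$), so $v_1^*=-\tfrac12\pi$, parametrize near $v^*$ by $s=v_1+\tfrac12\pi$, i.e.\ $\xi=\cos s$. A direct check with \eqref{eqn:dsolor}--\eqref{eqn:cor4a} shows that the arcs $\sigma^{++}$ ($s>0$) and $\sigma^{--}$ ($s<0$) join into a single smooth curve of equilibria with $v_{2n_0}=\tfrac12\pi-s$, the middle components even functions of $s$, and $K=K(s)=a\big/\big|\chi_0(\cos s)+\tfrac1{n_0}\sin2s\big|$, for which $K(0)=K_0$ and $K'(0)=-2a/(n_0\chi_0(1)^2)\ne0$ --- a branch through $K_0$, transverse in $K$; and that the arcs $\sigma^{+-}$ ($s>0$) and $\sigma^{-+}$ ($s<0$) join into the curve $K=\hat K(s)=a/|\chi_0(\cos s)|$, which is even in $s$ --- because $\chi_0$ contains no $\sqrt{1-\xi^2}$ term and is smooth at $\xi=1$ --- with $\hat K(0)=K_0$, $\hat K'(0)=0$, $\hat K''(0)=a\chi_0'(1)/\chi_0(1)^2\ne0$. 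This is exactly the pitchfork picture; consistently, the Jacobian $A$ above at $(v^*,K_0)$, where $\cos v_1^*=\cos v_{2n_0}^*=0$, $\sin v_1^*=-\sin v_{2n_0}^*$ and $1+C_v=n\hat C_\D^\sigma\ne0$, has $\ker A=\mathrm{span}\{e_1+e_{2n_0}\}$, one-dimensional and tangent to the secondary branch. Reducing \eqref{eqn:dsysor} to the one-dimensional center manifold at $(v^*,K_0)$ and matching the branch data gives the $\Zset_2$-equivariant normal form $\dot z=\alpha(K-K_0)z+\gamma z^3+\cdots$ with $\gamma\ne0$, the bifurcating pair occupying the side $\mathrm{sgn}(K-K_0)=-\mathrm{sgn}(\alpha\gamma)=\mathrm{sgn}\,\hat K''(0)=\mathrm{sgn}(\chi_0(1)\chi_0'(1))$ of $K_0$; hence the bifurcation is super- or subcritical according as \eqref{eqn:thm5a4} is positive or negative.

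The main obstacle is the linear-algebraic fact shared by both parts: that the zero eigenvalue of $A$ is \emph{simple} at the bifurcation values, so the Lyapunov--Schmidt and center--manifold reductions are genuinely one-dimensional and the fold is a non-degenerate saddle--node. At the pitchfork this is immediate from $\cos v_1^*=\cos v_{2n_0}^*=0$; at the saddle--nodes in $(0,1)$ it rests on identifying the singular set of the $2\times2$ system in $(\alpha,\beta)$ with $\{\d|\chi^\sigma|/\d\xi=0\}$ and checking that the kernel does not jump in dimension. The remaining work is bookkeeping --- matching the $\arcsin$ sign conventions in \eqref{eqn:dsolor}--\eqref{eqn:cor4a} with $\mathrm{sgn}\,\chi^\sigma$, and tracking which of $\sigma^{\pm\pm}$ lies on which side of $s=0$ and of $K_0$.
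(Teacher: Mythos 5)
Your proposal is correct in substance but takes a genuinely different, and much heavier, route than the paper. The paper's proof of part~(i) is a pure counting argument on the scalar equation $a/K=|\chi^\sigma(\xi)|$: at a local maximum (resp.\ minimum) of $|\chi^\sigma|$ there are no solutions $\xi$ near $\xi_0$ for $K$ slightly smaller (resp.\ larger) than \eqref{eqn:thm5a0} and two for $K$ slightly larger (resp.\ smaller), and since Theorem~\ref{thm:4a} classifies \emph{all} equilibria, this coalescence/annihilation of a pair is already the saddle-node; no Jacobian is touched. Part~(ii) is likewise read off from the coincidence of the four equilibria at $\xi=1$, the sign of \eqref{eqn:thm5a5} for $\sigma^{++}$ vs.\ $\sigma^{--}$, and the side of \eqref{eqn:thm5a3} on which the $\sigma^{+-},\sigma^{-+}$ pair lives. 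Your version instead verifies the normal-form hypotheses (simple zero eigenvalue, transversality, the $\Zset_2$-equivariant cubic at the pitchfork), which buys a genuinely nondegenerate bifurcation in the technical sense and a very clean description of the two branches through $\xi=1$ via $s=v_1+\tfrac12\pi$; your rank-two-plus-diagonal formula for $A$ at equilibria and the computation $\ker A=\Span\{e_1+e_{2n_0}\}$ at the pitchfork point are correct and not in the paper. The parts of your argument that reproduce the paper (the solvability-side criterion for super/subcriticality, the $\d\chi^\sigma/\d\xi\to\mp\infty$ behavior under \eqref{eqn:thm5a1}--\eqref{eqn:thm5a2}, concavity of $\psi_n$, and the sign of $\chi_0(1)\chi_0'(1)$ governing the side of the secondary pair) all match.

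The one genuine soft spot is the step you yourself flag: for part~(i) you assert, without computation, that the singular locus of the $2\times2$ system in $(\alpha,\beta)=(\mathbf p^\T\mathbf x,\mathbf q^\T\mathbf x)$ is exactly $\{\d|\chi^\sigma|/\d\xi=0\}$ with one-dimensional kernel, and that $\partial_K F\notin\mathrm{Range}\,A$ there. As written this is a claim, not a proof, and your "if and only if" in part~(i) rests on it in both directions (necessity needs the transversality of $\partial_K F$ to force $\d K/\d\xi=0$; sufficiency needs the kernel to be simple so the fold is nondegenerate). Note that the paper does not need this fact for Theorem~\ref{thm:5a} at all, and when it does need the spectral statement (Lemma~\ref{lem:6a} in Section~6) it proves it by a different argument: the symmetry of $A$ plus the isolatedness of equilibria (Remark~\ref{rmk:4a}(i), Remark~\ref{rmk:5b}) forces the zero eigenvalue to be simple with eigenvector $(\d v^\sigma/\d\xi)(\xi_*)$ and forces $(\d\chi^\sigma/\d\xi)(\xi_*)=0$, without ever solving the $2\times2$ system. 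So either carry out the $(\alpha,\beta)$ determinant computation explicitly, or replace that step by the isolatedness argument; as it stands your proof of the "if and only if" in part~(i) is incomplete, while everything you prove about \eqref{eqn:thm5a1}, \eqref{eqn:thm5a2}, the $\sigma_i\equiv1$ case, and part~(ii) is sound.
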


\begin{proof}
We see via Corollary~\ref{cor:4a} that if $\xi\in(0,1)$ satisfies \eqref{eqn:con},
 then there exists the equilibrium $v^\sigma$ with $|\hat{C}_\D^\sigma|=n_0a/nK\xi$.
Hence, If $|\chi^\sigma(\xi)|$ has a {local} maximum (resp. minimum)
 at $\xi=\xi_0$ on $(0,1)$,
 then {near \eqref{eqn:thm5a0},
 no value of $\xi$ satisfies \eqref{eqn:con} for smaller (resp. larger) values of $K$ than it
 but two values of $\xi$ satisfies it for larger (resp. smaller) values of $K$ than it,}
 so that the equilibrium $v^\sigma$ suffers
 a supercritical $($resp. subcritical$)$ saddle-node bifurcation there.

If condition~\eqref{eqn:thm5a1} or \eqref{eqn:thm5a2} holds,
 then $|\chi^\sigma(\xi)|$ has a {local} maximum,
 since $\chi^\sigma(0)=0$ and
\begin{equation}
\frac{\d \chi^{\sigma}}{\d\xi}(\xi)\to -\infty
\quad\mbox{or}\quad
\frac{\d \chi^{\sigma}}{\d\xi}(\xi)\to +\infty
\label{eqn:thm5a5}
\end{equation}
as $\xi\to 1$, depending on whether $\sigma_1,\sigma_{2n_0}=1$ or $-1$.
Moreover, when $\sigma_i=1$, $i\in[2n_0]$,
\[
\chi^\sigma(\xi)
=\frac{\xi}{n_0}\left(1+2\sum_{i=1}^{n_0}\sqrt{1-\left(\frac{i}{n_0}\xi\right)^2}\right)>0
\]
and
\begin{align*}
\frac{\d\chi^\sigma}{\d\xi}(\xi)
=&\frac{1}{n_0}\left(1+2\sum_{i=1}^{n_0}\biggl(1-2\left(\frac{i}{n_0}\xi\right)^2\biggr)
 \Bigg/\sqrt{1-\left(\frac{i}{n_0}\xi\right)^2}\right)\\
{
=}&
{
\frac{1}{n_0}\left(1+2\sum_{i=1}^{n_0}
\left(2\sqrt{1-\left(\frac{i}{n_0}\xi\right)^2}-1\Bigg/\sqrt{1-\left(\frac{i}{n_0}\xi\right)^2}\right)\right)}
\end{align*}
is monotonically decreasing on $(0,1)$ and goes to $-\infty$ as $\xi\to 1$,
 so that $\chi^\sigma(\xi)$ has a unique local maximum and no local minimum.
Thus, we prove part~(i).

We turn to the proof of part~(ii).
When $\xi=1$,
 the four equilibria coincide
 and the corresponding functions $\chi^{\sigma^{\pm\pm}}(\xi)$ have the same value.
To obtain the desired result, we only have to notice that
 $\sigma^{++}$ and $\sigma^{--}$, respectively,
 satisfy the first and second equations of \eqref{eqn:thm5a5} as $\xi\to 1$,
 and that $v^{\sigma^{+-}}$ and $v^{\sigma^{-+}}$ exist in a pair by Theorem~\ref{thm:4a}(ii)
 for larger or smaller  values of $K$ than and near \eqref{eqn:thm5a3},
 depending on whether the quantity \eqref{eqn:thm5a4},
 which takes the same value for both of $\sigma=\sigma^{+-}$ and $\sigma^{-+}$,
 is positive or negative.
\end{proof}

\begin{rmk}\
\label{rmk:5a}
\begin{enumerate}
\setlength{\leftskip}{-1.6em}
\item[\rm(i)]
It follows from Theorem~{\rm\ref{thm:5a}(i)} that
 one of the equilibria $v^\sigma$ with $\sigma=\sigma^{++}$ or $\sigma^{--}$
 in Theorem~{\rm\ref{thm:5a}(ii)} suffers a saddle-node bifurcation.
Moreover, the equilibrium $v^\sigma$ with $\sigma=\sigma^{+-}$ and $\sigma^{-+}$
 may also suffer a saddle-node bifurcation.
Indeed, when $\sigma_i\neq\sigma_{2n_0-i+1}$, $i\in[n_0]\setminus\{2\}$
 and $\sigma_2=\sigma_{2n_0-1}=1$, we have
\[
\chi^\sigma(\xi)=\frac{\xi}{n_0}\left(1+2\sqrt{1-\left(\frac{n_0-1}{n_0}\xi\right)^2}\right)>0
\]
on $(0,1]$ $($see \eqref{eqn:chi}$)$ and
\[
\frac{\d\chi^\sigma}{\d\xi}(\xi)
=\frac{1}{n_0}\left(1+2\biggl(1-2\left(\frac{n_0-1}{n_0}\xi\right)^2\biggr)
 \Bigg/\sqrt{1-\left(\frac{n_0-1}{n_0}\xi\right)^2}\right),
\]
which is monotonically decreasing.
If ${n_0\ge 6}$, then $\chi^\sigma(\xi)$ has a local maximum
 since $(\d\chi^\sigma/\d\xi)(1)<0$.
This means the claim by Theorem~{\rm\ref{thm:5a}(i)}.
\item[\rm(ii)]
In Theorem~{\rm\ref{thm:5a}(ii)},
if $(\d^j\chi^\sigma/\d\xi^j)(1)=0$, $j\in[\ell]$, for some $\ell\in\Nset$, then Eq.~\eqref{eqn:thm5a4} is replaced by
\[
\chi^\sigma(1)\frac{\d^{\ell+1}\chi^{\sigma}}{\d\xi^{\ell+1}}(1)\quad
\mbox{with $\sigma=\sigma^{+-}$ and $\sigma^{-+}$}.
\]
\item[\rm(iii)]
From Theorems~$\ref{thm:4a}$ and $\ref{thm:5a}$
 we see that in the reduced system \eqref{eqn:dsysor}
 $($resp. in the KM \eqref{eqn:dsyso}$)$,
 $2^{2n_0}$ or $3\cdot 2^{2(n_0-1)}$ equilibria $($resp. families of equilibria$)$ are born,
 depending on whether ones with  $\sigma=\sigma^{++}$ and $\sigma^{-+}$
 in Theorem~{\rm\ref{thm:5a}(ii)}
 are distinguished or not,
 and that $2^{2(n_0-1)}$ saddle-node bifurcations occur
 at least since a supercritical one occurs if Eq.~\eqref{eqn:thm5a1} or \eqref{eqn:thm5a2} holds.
\end{enumerate}
\end{rmk}

Moreover, we have the following on the number of pitchfork bifurcations
 occurring in these systems.

\begin{prop}
\label{prop:5a}
In the reduced system \eqref{eqn:dsysor} and equivalently in the KM \eqref{eqn:dsyso},
 the following hold on the number of pitchfork bifurcations$:$
\begin{enumerate}
\setlength{\leftskip}{-1.8em}
\item[(i)]
{
$2^{2n_0-3}+2^{n_0-2}n_0$} pitchfork bifurcations occur at least$;$
\item[(ii)]
$2^{2(n_0-1)}$ pitchfork bifurcations occur {at least} if $n_0$ is prime.
\end{enumerate}
\end{prop}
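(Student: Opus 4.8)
The plan is to translate the count of pitchfork bifurcations into a combinatorial problem for the sign sequences~$\sigma$. By Theorem~\ref{thm:5a}(ii), each pitchfork bifurcation of the equilibria~\eqref{eqn:dsolor} is specified by a choice of $\sigma_i\in\{-1,1\}$ for $i\in[2n_0]\setminus\{1,2n_0\}$, which I will call a \emph{reduced pattern}, together with the four sign combinations at the positions~$1$ and~$2n_0$; it takes place at a finite value of~$K$ precisely when $\chi^\sigma(1)\neq0$, so that $K=a/|\chi^\sigma(1)|$ is finite. Since distinct reduced patterns produce disjoint quadruples of equilibria (by Remark~\ref{rmk:4a}(i)) that coalesce at distinct points when $\xi=1$, and since there are $2^{2(n_0-1)}$ reduced patterns, the pitchfork bifurcations of Theorem~\ref{thm:5a}(ii) number $2^{2(n_0-1)}-N_0$, where $N_0$ is the number of reduced patterns with $\chi^\sigma(1)=0$; so both parts reduce to bounding~$N_0$. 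From~\eqref{eqn:chi} one reads off that $\chi^\sigma(1)$ depends only on which of the symmetric pairs $\{i,2n_0-i+1\}$ with $i\in\{2,\dots,n_0\}$ satisfy $\sigma_i=\sigma_{2n_0-i+1}$ and on the common value there — the pair $\{1,2n_0\}$ drops out since its coefficient vanishes at $\xi=1$ — so that
\[
\chi^\sigma(1)=\frac1{n_0}\Bigl(1+2\sum_{j\in E}\epsilon_j\sqrt{1-(j/n_0)^2}\Bigr),
\]
where $E\subseteq\{1,\dots,n_0-1\}$ indexes the \emph{equal pairs} and $\epsilon_j\in\{-1,1\}$. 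The flip $\sigma\mapsto-\sigma$ on reduced patterns sends $\chi^\sigma(1)$ to $\tfrac2{n_0}-\chi^\sigma(1)$, so the two members of a flip-pair cannot both vanish; as this involution is fixed-point free, grouping the reduced patterns into $2^{2n_0-3}$ flip-pairs shows at once that the number of pitchfork bifurcations is $2^{2n_0-3}+P$, where $P$ counts the flip-pairs with neither member vanishing.

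For part~(i) I would lower-bound~$P$ by exhibiting flip-pairs whose members obviously have $\chi^\sigma(1)\neq0$. If $E=\emptyset$ then $\chi^\sigma(1)=1/n_0\neq0$, and if $E=\{j\}$ with sign $\epsilon$ then $\chi^\sigma(1)=0$ would force $\epsilon=-1$ and $\sqrt{1-(j/n_0)^2}=\tfrac12$, i.e. $4j^2=3n_0^2$, which is impossible for positive integers since $\sqrt3$ is irrational. The flip preserves $|E|$, so each flip-pair built from reduced patterns with $|E|\le1$ lies among the $P$ pairs. Counting: there are $2^{n_0-1}$ reduced patterns with $E=\emptyset$ (each of the $n_0-1$ pairs unequal, in two ways) and $(n_0-1)\cdot2\cdot2^{n_0-2}$ with $|E|=1$ (choose the equal pair, its common value, and the two states of each of the remaining $n_0-2$ unequal pairs), i.e. $n_0\,2^{n_0-1}$ reduced patterns in all, hence $P\ge n_0\,2^{n_0-2}$. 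Therefore the number of pitchfork bifurcations is at least $2^{2n_0-3}+n_0\,2^{n_0-2}$.

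For part~(ii) I would show $N_0=0$ when $n_0$ is prime. Writing $\chi^\sigma(1)=0$ as $\sum_{j\in E}\epsilon_j\sqrt{n_0^2-j^2}=-n_0/2$ and decomposing $\sqrt{n_0^2-j^2}=c_j\sqrt{d_j}$ with $d_j$ squarefree and $c_j\in\Zset_{>0}$, the $\mathbb{Q}$-linear independence of $\{\sqrt d:d\text{ squarefree}\}$ forces the rational parts to match, i.e. $\sum_{j\in E,\,d_j=1}\epsilon_jc_j=-n_0/2$; the left-hand side is an integer, so $n_0$ is even, hence $n_0=2$. But for $n_0=2$ the only available index is $j=1$ with $n_0^2-j^2=3$, which is not a square, so $d_1\neq1$ and the left-hand side is the empty sum $0\neq-1$. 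This contradiction gives $\chi^\sigma(1)\neq0$ for every reduced pattern, so all $2^{2(n_0-1)}$ pitchfork bifurcations occur.

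The main obstacle I anticipate is not a single hard estimate but the careful bookkeeping: verifying that reduced patterns are in bijective correspondence with the pitchfork bifurcations of Theorem~\ref{thm:5a}(ii) with no double counting, that $\chi^\sigma(1)$ genuinely depends only on the equal-pair data (so the flip argument is legitimate), and that the enumeration in part~(i) lands exactly on $n_0\,2^{n_0-2}$; the arithmetic non-vanishing statements — impossibility of $4j^2=3n_0^2$ and the square-root independence argument in part~(ii) — are then routine.
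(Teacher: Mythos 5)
Your proposal is correct, and it reaches the same bounds as the paper by a partly different route. The paper also reduces the problem to counting sign patterns $\sigma$ with $\chi^\sigma(1)\neq 0$ (four full patterns per pitchfork, i.e.\ one per reduced pattern), but for part~(i) it stratifies by the set of ``equal pairs'' and, within each stratum of $\ell\ge 2$ equal pairs, pairs up sign choices by flipping only the sign at the last equal-pair position $i_\ell$, so that at most one member of each such pair can vanish; summing $2^{n_0-1}+(n_0-1)2^{n_0-1}+\sum_{\ell\ge2}\binom{n_0-1}{\ell}2^{n_0-2}$ gives exactly $2^{2n_0-3}+2^{n_0-2}n_0$. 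Your global involution $\sigma\mapsto-\sigma$, which sends $\chi^\sigma(1)$ to $\tfrac{2}{n_0}-\chi^\sigma(1)$ and so is likewise ``at most one zero per orbit,'' packages the same idea more cleanly (the base count $2^{2n_0-3}$ comes out in one stroke), and your supplementary count of the $|E|\le1$ orbits reproduces the extra $2^{n_0-2}n_0$; the two decompositions land on the identical bound. The more substantive difference is in part~(ii): the paper first uses Besicovitch-type linear independence to force some $\sqrt{1-(j/n_0)^2}$ to be rational, then invokes primality of $n_0$ to get a Pythagorean triple $q^2+(n_0-j)^2=n_0^2$... and Fermat's two-squares lemma to pin down the possible rational values $q/n_0$ before deriving a parity contradiction. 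Your argument — group the relation $\sum_{j\in E}\epsilon_j\sqrt{n_0^2-j^2}=-n_0/2$ by squarefree parts, observe the rational part is an integer, conclude $n_0$ must be even, and dispose of $n_0=2$ by hand — avoids the Pythagorean/two-squares machinery entirely and in fact proves the stronger statement that $\chi^\sigma(1)\neq0$ for every $\sigma$ whenever $n_0$ is odd, not merely prime. Your bookkeeping (that $\chi^\sigma(1)$ depends only on the equal-pair data because the $\{1,2n_0\}$ coefficient vanishes at $\xi=1$, and that distinct reduced patterns give distinct quadruples) matches what the paper uses implicitly, so I see no gap.
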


\begin{proof}
We begin with the proof of part~(i).
By Theorem~\ref{thm:5a}(ii) and Remark~\ref{rmk:5a}(ii),
 we only have to estimate the number of $\sigma\in\Sigma_{n_0}$
 such that  $\chi^\sigma(1)\neq{ 0}$.
Let $4\bar{n}$ denote the number.
{\color{black}If $\sigma_j\neq\sigma_{2n_0-j+1}$ for any $j\in[n_0]\setminus\{1\}$,
 then $\chi^\sigma(1)=1/n_0$.}
If for some $\ell\in[n_0-1]$ there exists a sequence $\{i_k\}_{k=1}^\ell$ 
 such that $i_k\in[n_0]\setminus\{1\}$, $\sigma_j=\sigma_{2n_0-j+1}$ for $j=i_k$
 and $\sigma_j\neq\sigma_{2n_0-j+1}$ for $j\neq i_k$, then
\begin{equation}
\chi^\sigma(1)
=\frac{1}{n_0}\left(1+2\sum_{k=1}^\ell\sigma_{i_k}\sqrt{1-\left(\frac{i_k-n_0-1}{n_0}\right)^2}\right).
\label{eqn:prop5a1}
\end{equation}
Obviously, 
 if $\ell=1$, then $\chi^\sigma(1)\neq 0$.
Assume that $\ell\ge 2$
 and let $\sigma^{\pm}\in\Sigma_{n_0}$ be such sequences
 satisfying $\sigma_{i_\ell}^\pm=\pm 1$
 and $\sigma_{i_j}^+=\sigma_{i_j}^-$ for $j\in[\ell-1]$.
If $\chi^{\sigma^+}(1)=0$, then $\chi^{\sigma^-}(1)\neq 0$, and vice versa.
The number of how to choose the pair $\sigma^\pm$ is $2^{\ell-1}$
 for each sequence $\{i_j\}_{j=1}^\ell$.
{\color{black}Since there are four sequences having the same subsequence $\{\sigma_j\}_{j=2}^{2n_0-1}$,
 we have}
\begin{align*}
\bar{n}\ge&
2^{n_0-1}+2^{n_0-2}\cdot 2(n_0-1)
 +\sum_{\ell=2}^{n_0-1}\frac{(n_0-1)!}{(n_0-\ell-1)!\ell!}2^{\ell-1}2^{n_0-\ell-1}\\
=&2^{2n_0-3}+2^{n_0-2}n_0,
\end{align*}
which yields part~(i).

We turn to the proof of part~(ii).
Again, by Theorem~\ref{thm:5a}(ii) and Remark~\ref{rmk:5a}(ii),
 we only have to show that $\chi^\sigma(1)\neq{0}$ for any $\sigma\in\Sigma_{n_0}$
 when $n_0$ is prime.
We  prove it by contradiction
 and assume that $\chi^\sigma(1)$ is given by \eqref{eqn:prop5a1}
 and  $\chi^\sigma(1)=0$ for some $\sigma\in\Sigma_{n_0}$ as well as $n_0$ is prime.
For some $i_k\neq 1$ and relatively prime positive integers $q,p$ with $p\ge 2$,
 we must have
\[
\sqrt{1-\left(\frac{i_k-n_0-1}{n_0}\right)^2}=\frac{q}{p}\in(0,1),
\]
i.e.,
\begin{equation}
n_0^2-(i_k-n_0-1)^2=\frac{q^2n_0^2}{p^2},\quad
p>q>0,
\label{eqn:prop5a2}
\end{equation}
since square roots of non-square integers are linearly independent over $\Zset$
 (see, e.g., \cite{B40}).
Moreover, 
 if $(p,q)=(p_{j_k},q_{j_k})$, $k\in[m]$, satisfy \eqref{eqn:prop5a2} for some $m\in[\ell]$,
 then we find a sequence $\{s_j\}_{j=1}^m$ such that $s_j\in\{0,\pm 1\}$, $k\in[m]$, and
\[
\sum_{k=1}^{m}s_k\frac{p_{j_k}}{q_{j_k}}=-\tfrac{1}{2}.
\]

Since $n_0$ is prime, it follows from \eqref{eqn:prop5a2} that $n_0=p$.
So we have
\begin{equation}
q^2+(p-i_k+1)^2=p^2.
\label{eqn:prop5a3}
\end{equation}
Hence, $(q,p-i_k+1,p)$ is a \emph{Pythagorean triple} \cite{M07}.
We need the following elementary result on Pythagorean triples.

\begin{lem}
\label{lem:5a}
Let $\ell_j\in\Nset$, $j=1,2,3$, and $\ell_1,\ell_3$ are relatively prime.
The triple $(\ell_1,\ell_2,\ell_3)$ is Pythagorean, i.e., $\ell_1^2+\ell_2^2=\ell_3^2$,
 if and only if there exist relatively prime integers $m_1,m_2$ with $m_1>m_2>0$
 such that $m_1,m_2$ are of opposite parity, i.e., one is even and the other is odd,
\[
(\ell_1,\ell_2)=(m_1^2-m_2^2,2m_1m_2)\mbox{ or }(2m_1m_2,m_1^2-m_2^2)
\] 
and $\ell_3=m_1^2+m_2^2$.
\end{lem}

See, e.g., Section~13.2 of \cite{HW79} or Chapter~1 and Appendix~B of \cite{M07}
 for a proof of Lemma~\ref{lem:5a}.
Note that if the triple $(\ell_1,\ell_2,\ell_3)$ is Pythagorean
 and $\ell_1,\ell_3$ are relatively prime,
 then so are $\ell_1,\ell_2,\ell_3$.

We also need the following elementary result on a sum of two squares
 (see, e.g., Section~15.1 of \cite{HW79} for a proof).

\begin{lem}
\label{lem:5b}
Let $\ell_3>2$ be a prime integer.
There exist $m_1,m_2\in\Nset$ such that $\ell_3=m_1^2+m_2^2$ and $m_1>m_2$
 if and only if $\ell_3=1\mod 4$.
Moreover, the pair $(m_1,m_2)$ is unique.
\end{lem}

We return to the proof of part~(ii) {in Proposition~\ref{prop:5a}}.
It follows from Lemmas~\ref{lem:5a} and \ref{lem:5b} that
 if Eq.~\eqref{eqn:prop5a3} holds, then there uniquely exists a pair $(m_1,m_2)$ of integers 
 such that $m_1>m_2>0$ and
\[
p=m_1^2+m_2^2,\quad
q=2m_1m_2\mbox{ or }m_1^2-m_2^2.
\]
{\color{black}Since $p=n_0$, we have $m=2$ and}
\[
s_1\frac{2m_1m_2}{m_1^2+m_2^2}+s_2\frac{m_1^2-m_2^2}{m_1^2+m_2^2}=-\tfrac{1}{2}
\]
for some $s_j\in\{0,\pm 1\}$, but this never occurs.
So we have a contradiction.
This completes the proof {of Proposition~\ref{prop:5a}}.
\end{proof}

We leave our discussion on saddle-node and pitchfork bifurcations in \eqref{eqn:dsysor}.

\begin{prop}
\label{prop:5b}
For any $\sigma\in\Sigma_{n_0}$,
 the equilibrium $v^\sigma$ suffers no Hopf bifurcation {in \eqref{eqn:dsysor}}.
\end{prop}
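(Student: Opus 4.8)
The plan is to exploit the gradient structure of the Kuramoto model and to show that the Jacobian of \eqref{eqn:dsysor} at any equilibrium $v^\sigma$ has only real eigenvalues; a Hopf bifurcation would require a simple pair of nonzero purely imaginary eigenvalues at the bifurcation point, which is then impossible.

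First I would observe that the full system \eqref{eqn:dsyso} is a gradient system. With
$V(u)=-\nu\sum_{i=1}^n(i-n_0-1)u_i-\frac{K}{2n}\sum_{i,j=1}^n\cos(u_i-u_j)$,
a direct computation gives $-\partial V/\partial u_i=(i-n_0-1)\nu+\frac{K}{n}\sum_j\sin(u_j-u_i)$, i.e. the right-hand side of \eqref{eqn:dsyso}. Hence the Jacobian $J(u)$ of \eqref{eqn:dsyso} is symmetric: by the evenness of the cosine, $J_{ik}(u)=\frac{K}{n}\cos(u_i-u_k)$ for $k\neq i$, and the diagonal entries are chosen so that every row sums to zero, i.e. $J(u)\mathbf 1=0$ with $\mathbf 1=(1,\dots,1)$. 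In particular every eigenvalue of $J(u)$ is real, and $0$ is an eigenvalue with eigenvector $\mathbf 1$, reflecting the invariance $u\mapsto u+\theta\mathbf 1$.

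Next I would relate the Jacobian $A$ of the reduced system \eqref{eqn:dsysor} to $J$. The substitution $v_i=u_i-u_{n_0+1}$ (with the index shift for $i>n_0$) realizes \eqref{eqn:dsysor} as the quotient of \eqref{eqn:dsyso} by the $\Sset^1$-action $u\mapsto u+\theta\mathbf 1$: there is a linear surjection $P:\Rset^n\to\Rset^{2n_0}$ with kernel $\Rset\mathbf 1$ and $v=Pu$, together with a linear section $R:\Rset^{2n_0}\to\Rset^n$ (set $u_{n_0+1}=0$) satisfying $PR=I_{2n_0}$. Since the right-hand side of \eqref{eqn:dsyso} depends on $u$ only through differences, the reduced vector field is $P\circ F\circ R$, where $F$ is the full vector field, so $A=P\,J(Rv)\,R$. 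Writing $W=R(\Rset^{2n_0})=\{u:u_{n_0+1}=0\}$ and $Q=RP$ (the projection of $\Rset^n$ onto $W$ along $\Rset\mathbf 1$), this shows that $A$ is conjugate, via the isomorphism $R:\Rset^{2n_0}\to W$, to the compression $(QJ)|_W:W\to W$. Choosing a basis $e_1,\dots,e_{2n_0}$ of $W$ and setting $e_n=\mathbf 1$, the relation $Je_n=0$ makes the matrix of $J$ block lower triangular, $\begin{pmatrix}B&0\\ C&0\end{pmatrix}$, where $B$ is precisely the matrix of $(QJ)|_W$; hence $\det(\lambda I-J)=\lambda\,\det(\lambda I-B)$, so every eigenvalue of $B$ is an eigenvalue of $J$ and is therefore real. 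Consequently $A$ has only real eigenvalues for every admissible $K>0$ and every $\sigma\in\Sigma_{n_0}$, so no pair of complex-conjugate eigenvalues can ever reach the imaginary axis and no Hopf bifurcation of $v^\sigma$ can occur.

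The one subtlety — and the main obstacle to a one-line argument — is that $A$ itself is not symmetric: comparing $\partial\dot v_i/\partial v_k$ with $\partial\dot v_k/\partial v_i$ in \eqref{eqn:dsysor} shows they differ. So the real-spectrum conclusion has to be routed through the symmetric full Jacobian $J$ via the conjugacy above (or, equivalently, one notes that \eqref{eqn:dsysor} is a gradient flow for the constant quotient metric inherited from $\Rset^n$, whose linearization at any equilibrium is self-adjoint for that metric). I would double-check only that the section $R$ really reproduces \eqref{eqn:dsysor} with the stated index shift and that $J\mathbf 1=0$ holds with the sign conventions of \eqref{eqn:dsyso}; both are routine.
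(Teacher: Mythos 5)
Your proposal is correct and follows essentially the same route as the paper: both arguments rest on the observation that the Jacobian of the full system \eqref{eqn:dsyso} is symmetric (the paper checks the entries directly via \eqref{eqn:A1}--\eqref{eqn:A2}, you derive it from the gradient structure), hence has only real eigenvalues, which rules out the purely imaginary pair a Hopf bifurcation requires. Your version is in fact slightly more careful than the paper's at the reduction step: the paper simply asserts that \eqref{eqn:dsyso} and \eqref{eqn:dsysor} exhibit Hopf bifurcations simultaneously, whereas your block-triangular factorization through the section $R$ and projection $P$ shows explicitly that the spectrum of the reduced Jacobian is contained in that of the symmetric full Jacobian.
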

 
\begin{proof}
We first notice that
 the one-parameter family of equilibria given by \eqref{eqn:dsyssol} in \eqref{eqn:dsyso}
 has the same stability type as the corresponding equilibrium in \eqref{eqn:dsysor}.
Moreover, the former exhibits Hopf bifurcations
 if and only if so does the latter. 
So we analyze the KM \eqref{eqn:dsyso} instead of \eqref{eqn:dsysor} in the following.

We compute each element of the Jacobian matrix $A$
 for the vector field of \eqref{eqn:dsyso} as
\begin{equation}
A_{ij}=\begin{cases}
\displaystyle
-\frac{K}{n}\biggl(\cos v_i+\sum_{j=1,j\neq i}^{2n_0}\cos(v_j-v_i)\biggr) & \mbox{if $i=j$};\\
\displaystyle
\frac{K}{n}\cos(v_j-v_i) & \mbox{if $i\neq j\neq n_0+1$};\\[2ex]
\displaystyle
\frac{K}{n}\cos v_i & \mbox{if $i\neq j=n_0+1$}
\end{cases}
\label{eqn:A1}
\end{equation}
for $i\neq n_0+1$ and
\begin{equation}
A_{n_0+1,j}=\begin{cases}
\displaystyle
-\frac{K}{n}\sum_{j=1}^{2n_0}\cos v_j & \mbox{if $j=n_0+1$};\\
\displaystyle
\frac{K}{n}\cos v_j & \mbox{if $j\neq n_0+1$}.
\end{cases}
\label{eqn:A2}
\end{equation}
Thus, the matrix $A$ is symmetric and consequently only has real eigenvalues.
This implies the desired result.
\end{proof}

\begin{rmk}
\label{rmk:5b}
From Remark~{\rm\ref{rmk:4a}(i)} and Proposition~$\ref{prop:5b}$ we see that
 no other bifurcation than ones detected in Theorem~$\ref{thm:5a}$ occurs
 for equilibrium in the system \eqref{eqn:dsyso}.
\end{rmk}

\begin{figure}
\includegraphics[scale=0.58]{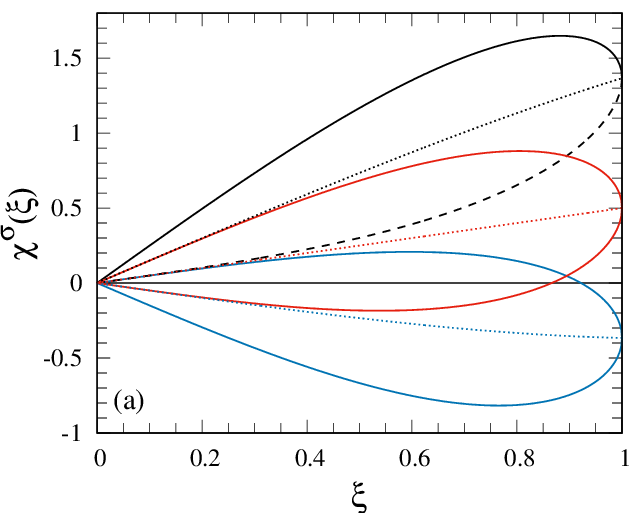}\\[1ex]
\includegraphics[scale=0.58]{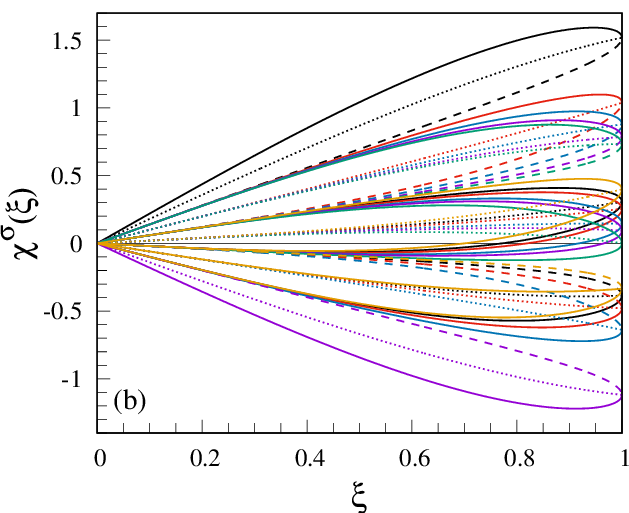}\
\includegraphics[scale=0.58]{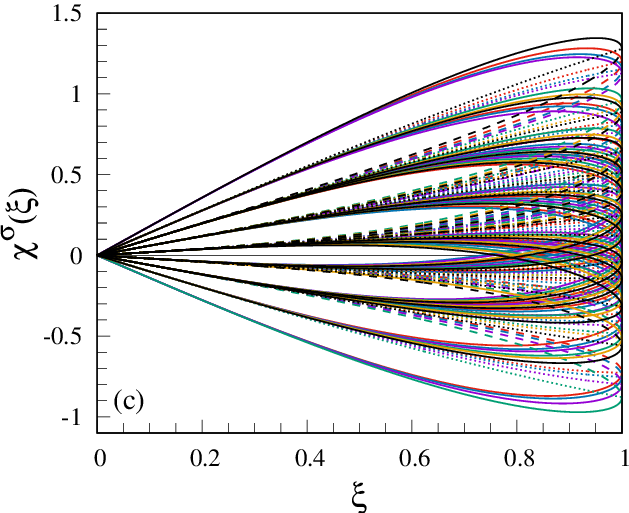}
\caption{Function $\chi^\sigma(\xi)$:
(a) $n=5$; (b) and (c) $n=11$.
See the text for more details.
\label{fig:5a}}
\end{figure}

The graph of $\chi^\sigma(\xi)$ is displayed
 for each $\sigma\in\Sigma_{n_0}$ when $n=5$ and $11$ in Fig.~\ref{fig:5a}. 
It is plotted as a solid line if $\chi^\sigma(\xi)$ has a local maximum or minimum,
 and as a dotted or dashed line otherwise,
 depending on whether $\sigma_1\neq\sigma_{2n_0}$ or not.
In Fig.~\ref{fig:5a}(a),
 the black and blue lines represent the graphs
 for $(\sigma_2,\sigma_3)=(1,1)$ and  $(-1,-1)$, respectively,
 while the red line for $(\sigma_2,\sigma_3)=(1,-1)$ or $(-1,1)$.
In Figs.~\ref{fig:5a}(b) and (c), respectively,
 the graphs are plotted for $\sigma\in\Sigma_{n_0}$
 with $\sigma_i=\sigma_{2n_0-i+1}$ for any $i\neq 1$
 and with $\sigma_i\neq\sigma_{2n_0-i+1}$ for some $i\neq 1,2n_0$
 when $n=11$.
See Appendix~B for the meaning of the line colors in Figs.~\ref{fig:5a}(b) and (c).


\section{Stability}

For the KM \eqref{eqn:dsyso} and reduced system \eqref{eqn:dsysor}
 we finally discuss the stability of equilibria
 detected by Theorem~\ref{thm:4a} and equivalently by Corollary~\ref{cor:4a}.
Unlike \eqref{eqn:dsys3r},
 it seems difficult to determine the stability of the equilibria in \eqref{eqn:dsysor}.
However,  we can prove the following theorem.

\begin{thm}\
\label{thm:6a}
\begin{enumerate}
\setlength{\leftskip}{-1.8em}
\item[(i)]
The equilibrium $v^\sigma$ with $\sigma_i=1$ for all $i\in[2n_0]$
 is asymptotically stable if $\xi=n_0a/nK|\hat{C}_\D^\sigma|<\xi_0$
 and unstable if $\xi>\xi_0$, where $\chi^\sigma(\xi_0)$ is the unique local maximum 
 detected in Theorem~{\rm\ref{thm:5a}(i)}.
\item[(ii)]
The equilibrium $v^\sigma$ is unstable
 if $\sigma_i=-1$ for some $i\in[2n_0]$.
\end{enumerate}
\end{thm}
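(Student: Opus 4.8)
\noindent\emph{Plan of proof.}
By the opening of the proof of Proposition~\ref{prop:5b}, it suffices to analyze the symmetric Jacobian matrix $A$ of the KM~\eqref{eqn:dsyso} at the family~\eqref{eqn:dsyssol} determined by $v^\sigma$: the equilibrium $v^\sigma$ of~\eqref{eqn:dsysor} is asymptotically stable if $A$, apart from the zero eigenvalue with eigenvector $(1,\dots,1)$ forced by the rotational symmetry, has only negative eigenvalues, and it is unstable if $A$ has a positive eigenvalue. Write $u_i^*$, $i\in[n]$, for the equilibrium values normalized by $u_{n_0+1}^*=0$ (so $u_i^*=v_i^\sigma$ after the relabeling), and put $\mathbf c=(\cos u_i^*)_{i\in[n]}$, $\mathbf s=(\sin u_i^*)_{i\in[n]}$. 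Substituting $\cos(u_j^*-u_i^*)=\cos u_i^*\cos u_j^*+\sin u_i^*\sin u_j^*$ into~\eqref{eqn:A1}--\eqref{eqn:A2} and using Lemma~\ref{lem:4a} together with $\sum_{i=1}^n\cos u_i^*=n\hat C_\D^\sigma$ (a consequence of~\eqref{eqn:CDj} and~\eqref{eqn:dsolor}), the first step is to bring $A$ into the form
\[
\frac nK\,A=\mathbf c\mathbf c^{\!\top}+\mathbf s\mathbf s^{\!\top}-n\hat C_\D^\sigma\,\mathrm{diag}(\cos u_1^*,\dots,\cos u_n^*),
\]
where the diagonal follows from $\cos^2u_i^*+\sin^2u_i^*=1$. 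Equivalently, with $M:=n\hat C_\D^\sigma\,\mathrm{diag}(\cos u_i^*)-\mathbf c\mathbf c^{\!\top}-\mathbf s\mathbf s^{\!\top}$, the family is asymptotically stable iff $M$ is positive semidefinite with one-dimensional kernel and unstable iff $M$ has a negative eigenvalue; note $M(1,\dots,1)^{\!\top}=0$ since $\mathbf c^{\!\top}(1,\dots,1)^{\!\top}=n\hat C_\D^\sigma$ and $\mathbf s^{\!\top}(1,\dots,1)^{\!\top}=0$.

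For part~(i), when $\sigma_i=1$ for all $i$ we have $\hat C_\D^\sigma=C_\D$ and $u_i^*\in(-\tfrac{1}{2}\pi,\tfrac{1}{2}\pi)$ because $\xi<1$, so $D:=\mathrm{diag}(\cos u_i^*)$ is positive definite and $M$ is congruent, via $D^{-1/2}$, to $nC_\D\,I-\tilde{\mathbf c}\tilde{\mathbf c}^{\!\top}-\tilde{\mathbf s}\tilde{\mathbf s}^{\!\top}$ with $\tilde{\mathbf c}=(\sqrt{\cos u_i^*})_i$ and $\tilde{\mathbf s}=(\sin u_i^*/\sqrt{\cos u_i^*})_i$. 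Since $\tilde{\mathbf c}^{\!\top}\tilde{\mathbf s}=\sum_i\sin u_i^*=0$, the two vectors are orthogonal, so $\tilde{\mathbf c}\tilde{\mathbf c}^{\!\top}+\tilde{\mathbf s}\tilde{\mathbf s}^{\!\top}$ has eigenvalues $\tilde{\mathbf c}^{\!\top}\tilde{\mathbf c}=nC_\D$, $\tilde{\mathbf s}^{\!\top}\tilde{\mathbf s}=\sum_i\sin^2u_i^*/\cos u_i^*$, and $0$ with multiplicity $n-2$; hence the eigenvalues of $M$ have the same signs as $0$, $nC_\D-\sum_i\sin^2u_i^*/\cos u_i^*$, and $nC_\D>0$ (the last with multiplicity $n-2$). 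So the family is asymptotically stable if $nC_\D-\sum_i\sin^2u_i^*/\cos u_i^*>0$ and unstable if this quantity is negative. The decisive identity, which I would verify by differentiating $\chi^\sigma(\xi)=\frac{\xi}{n_0}\sum_{k=-n_0}^{n_0}\sqrt{1-(k\xi/n_0)^2}$ (the form of $\chi^\sigma$ for $\sigma\equiv1$) and using $\sin u_i^*=k\xi/n_0$, $\cos u_i^*=\sqrt{1-(k\xi/n_0)^2}>0$ at the relevant node, is
\[
nC_\D-\sum_{i=1}^n\frac{\sin^2u_i^*}{\cos u_i^*}=\sum_{i=1}^n\frac{\cos^2u_i^*-\sin^2u_i^*}{\cos u_i^*}=n_0\,\frac{\d\chi^\sigma}{\d\xi}(\xi).
\]
By Theorem~\ref{thm:5a}(i), $\chi^\sigma$ increases on $(0,\xi_0)$ and decreases on $(\xi_0,1)$, so the quantity is positive for $\xi<\xi_0$ and negative for $\xi>\xi_0$, which is part~(i).

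For part~(ii), take any $i$ with $\sigma_i=-1$; by~\eqref{eqn:dsolor} its equilibrium value lies in $(\tfrac{1}{2}\pi,\pi)\cup(-\pi,-\tfrac{1}{2}\pi)$, so $\cos u_i^*<0$, whereas $\cos u_{n_0+1}^*=1$ for every $\sigma$. Since $M_{ii}=n\hat C_\D^\sigma\cos u_i^*-1$, the plan is simply to exhibit a negative diagonal entry of $M$: if $\hat C_\D^\sigma\le0$ then $M_{n_0+1,n_0+1}=n\hat C_\D^\sigma-1\le-1<0$, while if $\hat C_\D^\sigma>0$ then $M_{ii}=n\hat C_\D^\sigma\cos u_i^*-1<-1<0$ for $i$ with $\sigma_i=-1$. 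In either case $M$ is not positive semidefinite, so $A$ has a positive eigenvalue and $v^\sigma$ is unstable.

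I expect the only real obstacle to lie in part~(i): obtaining the compact form of $A$ (checking the diagonal cancellations), noticing the orthogonality $\tilde{\mathbf c}\perp\tilde{\mathbf s}$ that makes the spectrum of the rank-two term explicit, and --- the crux --- recognizing the scalar $nC_\D-\sum_i\sin^2u_i^*/\cos u_i^*$ as $n_0$ times $\d\chi^\sigma/\d\xi$, which is exactly what links the stability threshold to the saddle-node value $\xi_0$ of Theorem~\ref{thm:5a}(i). At $\xi=\xi_0$ the matrix $M$ has a two-dimensional kernel (the fold), so linearization is inconclusive there, consistent with the statement, and $\xi=1$ is a pitchfork value of Theorem~\ref{thm:5a}(ii) and is likewise excluded; part~(ii) is routine once the compact form of $A$ is available.
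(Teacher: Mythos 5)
Your argument is correct, and it reaches Theorem~\ref{thm:6a} by a genuinely different route from the paper. The paper proves the theorem by continuation in $\xi$: it computes the explicit limit matrix $A_0$ as $\xi\to 0$ (Lemma~\ref{lem:6b}), shows via a center-manifold reduction that the inertia of the Jacobian can only change at extrema of $\chi^\sigma$ and determines how it changes there (Lemmas~\ref{lem:6a}, \ref{lem:6c}, \ref{lem:6d}), and separately handles zero-crossings of $\chi^\sigma$ (Lemma~\ref{lem:6e}). You instead compute the inertia statically at each $\xi$ from the rank-two decomposition $\tfrac{n}{K}A=\mathbf c\mathbf c^{\top}+\mathbf s\mathbf s^{\top}-n\hat C_\D^\sigma\,\mathrm{diag}(\cos u_i^*)$ (I checked the diagonal: $\tfrac{n}{K}A_{ii}=1-n\hat C_\D^\sigma\cos u_i^*$, using $\sum_k\cos u_k^*=n\hat C_\D^\sigma$ and $\sum_k\sin u_k^*=0$, so the identity holds). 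For part~(i) the congruence by $D^{-1/2}$, the orthogonality $\tilde{\mathbf c}\perp\tilde{\mathbf s}$, and the identity $nC_\D-\sum_i\sin^2u_i^*/\cos u_i^*=n_0(\d\chi^\sigma/\d\xi)(\xi)$ (which matches the paper's formula for $\d\chi^\sigma/\d\xi$ term by term) give the full signature in one stroke, and Theorem~\ref{thm:5a}(i)'s monotonicity of $\d\chi^\sigma/\d\xi$ finishes it; for part~(ii) a single diagonal entry $M_{ii}\le -1$ certifies a positive eigenvalue of $A$, which is orthogonal to $\mathbf 1$ and hence descends to the reduced system. This is essentially the Mirollo--Strogatz rank-two picture; the paper remarks that their proof of (ii) was incomplete in a more general setting, but with evenly spaced frequencies your certificate is strict and the argument is airtight. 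What your route buys is brevity, an exact formula for the critical eigenvalue, and the full inertia at every $\xi$; what the paper's route buys is machinery (eigenvalue counting and its changes at folds and zero-crossings) that is reused for the bifurcation analysis. Two minor points worth tightening: at $\xi=1$ the matrix $D$ is singular (the extremal phases reach $\pm\tfrac12\pi$) so the congruence in (i) degenerates, but there $M_{11}=-1<0$ and the diagonal-entry trick of (ii) covers instability; and in (ii) when $\cos u_i^*=0$ one gets $M_{ii}=-1$ rather than $M_{ii}<-1$, which is still negative. Neither affects correctness.
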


\begin{proof}
Fix $\sigma\in\Sigma_{n_0}$
 and increase $\xi\in(0,1)$ along a branch of equilibria,
 $v^\sigma$ with $\chi^\sigma(\xi)$.
{\color{black}
Let $A$ denote the Jacobian matrix for the vector field of \eqref{eqn:dsyso},
 as in the proof of Proposition~\ref{prop:5b}.}
We have  the following lemma.

\begin{lem}
\label{lem:6a}
Suppose that the multiplicity of the zero eigenvalue of $A$
 for $v^\sigma$ with $\chi^\sigma(\xi)$, $\sigma\in\Sigma_{n_0}$,
 increases or decreases  at $\xi=\xi_*$ with $\chi^\sigma(\xi_\ast)\neq 0$,
 when the value of $\xi\in(0,1)$ is increased.
Then $\chi^\sigma(\xi)$ has an extremum at $\xi=\xi_*$
 and the multiplicity changes by one.
\end{lem}

\begin{proof}
{\color{black}
Let $\sigma\in\Sigma_{n_0}$ be fixed and let $\beta=a/K$.
We write $v^\sigma(\xi)=v^\sigma\in\Tset^{2n_0}$ for $\xi\in(0,1)$
 and $Kf(v;\beta)$ for the the vector field of \eqref{eqn:dsysor},
 i.e., it becomes $f(v;\beta)$ when the time is changed as $t\to t/K$.
Note that $f(v;\beta)$ is independent of $K$.
The Jacobian matrix $\D_v f(v^\sigma(\xi);\beta)$ is symmetric like that of the KM \eqref{eqn:dsys},
 so that its eigenvalues are real and geometrically simple.
In particular, if the eigenvalues change their signs, then they must become zero
 (see the proof of Proposition~\ref{prop:5b}).
 
Assume that $\D_v f(v^\sigma(\xi);|\chi^\sigma(\xi)|)$
 has a simple zero eigenvalue at $\xi=\xi_*\in(0,1)$ with $\chi^\sigma(\xi_*)\neq 0$.
Let $\bar{e}\in\Rset^{2n_0}$ denote the associated eigenvector.
Then the equilibrium $v^\sigma(\xi_*)$ has a one-dimensional center manifold \cite{GH83,W03}
 on which the system \eqref{eqn:dsysor} reduces to
\begin{equation}
\dot{v}_\c=c_1v_\c^j+c_2(\beta-\beta_*)+O(v_\c^{j+1}),\quad
v_\c\in\Rset,
\label{eqn:lem6a}
\end{equation}
where $\beta_*=|\chi^\sigma(\xi_*)|$, the $O(v_\c^{j+1})$-terms are independent of $\beta$,}
 $c_1,c_2\in\Rset$ are constants and $j>1$ is an even integer
 since the eigenvalue does not change its sign if $j$ is odd.
{\color{black}Note that
\begin{equation}
\frac{\partial f_i}{\partial\beta}(v;\beta)
=\begin{cases}
(i-n_0-1)/n & \mbox{for $i\le n_0$};\\
(i-n_0)/n  & \mbox{for $n_0<i\le 2n_0$}
\end{cases}
\label{eqn:dfb}
\end{equation}
for $i\in[2n_0]$,
 where $f_i(v;\beta)$ is the $i$th element of $f(v;\beta)$.}
If $\bar{e}$ is linearly independent of $(\d v^\sigma/\d\xi)(\xi_*)$,
 then by \eqref{eqn:lem6a}
 there exists a different family of equilibria from $v^\sigma(\xi)$ near $\xi=\xi_*$,
 but both coincide at $\xi=\xi_*$.
This contradicts that the equilibrium $v^\sigma(\xi)$ is isolated for $\xi\in(0,1)$
 as stated in Remark~\ref{rmk:5b}.
Hence, we can take $\bar{e}=(\d v^\sigma/\d\xi)(\xi_*)$, so that
\[
\frac{\d\chi^\sigma}{\d\xi}(\xi_*)=0
\]
since $\chi^\sigma(\xi_*)\neq0$ and $f\left(v^\sigma(\xi);|\chi^\sigma(\xi)|\right)=0$, so that
\begin{align*}
&
\frac{\d}{\d\xi}f\left(v^\sigma(\xi);|\chi^\sigma(\xi)|\right)\Bigl|_{\xi=\xi_*}\\
&
=\D_vf(v^\sigma(\xi_*);\beta_*)\frac{\d v^\sigma}{\d\xi}(\xi_*)
\mp\frac{\partial f}{\partial\beta}(v^\sigma(\xi_*);\beta_*)\frac{\d\chi^\sigma}{\d\xi}(\xi_*)=0,
\end{align*}
where the upper or lower sign is taken,
 depending on whether $\chi^\sigma(\xi_*)$ is positive or negative.
Similarly, we can show that
 $\D_v f(v^\sigma(\xi_*);\beta_*)$ never has a non-simple zero eigenvalue
 even when $(\d\chi^\sigma/\d\xi)(\xi_*)=0$,
 since the equilibrium $v^\sigma(\xi_*)$ is isolated for $\xi\in(0,1)$.
Thus, we obtain the desired result.
\end{proof}

We turn to the proof of Theorem~\ref{thm:6a}.
As in the proof of Proposition~\ref{prop:5b},
 we first analyze the KM \eqref{eqn:dsyso}.

Fix $\sigma\in\Sigma_{n_0}$, 
 and let $n_+$ and $n_-$, respectively,
 denote the numbers of $\sigma_i=1$ and $-1$, $i \in[2n_0]$.
Take the limit $\xi=n_0a/nK|\hat{C}_\D^\sigma|\to0$.
Then $K\to\infty$ and $\phi_i\to 0$, 
 so that by \eqref{eqn:dsolor}
\[
v_i^\sigma\to\begin{cases}
0 & \mbox{if $\sigma_i=1$;}\\
\mbox{$\pi$ or $-\pi$} & \mbox{if $\sigma_i=-1$}.
\end{cases}
\]
Hence,
\[
\cos v_i\to
\begin{cases}
1 & \mbox{if $\sigma_i=1$;}\\
-1 & \mbox{if $\sigma_i=-1$,}
\end{cases}
\quad
\cos(v_j-v_i)\to
\begin{cases}
1 & \mbox{if $\sigma_i=\sigma_j$;}\\
-1 & \mbox{if $\sigma_i\neq\sigma_j$}
\end{cases}
\]
and
\[
\cos v_i+\sum_{j=1,j\neq i}^{2n_0}\cos(v_j-v_i)\to
\begin{cases}
n_+-n_- & \mbox{if $\sigma_i=1$;}\\
n_--n_+-2  & \mbox{if $\sigma_i=-1$.}
\end{cases}
\]
We appropriately replace the rows and columns
 to write the Jacobian matrix $A$ for the vector field of \eqref{eqn:dsyso} as
\[
{\color{black}
A=\frac{K}{n}A_0+O(1),\quad
K\to\infty,}
\]
as $\xi\to 0$, where
\[
A_0=
\begin{array}{r@{}l}
& \begin{array}{ccccccc}
\mbox{ } & \makebox[1em]{ } & \makebox[2em]{ } & n_++1 & n_++2
\end{array}\\
\begin{array}{l}
\\
n_++1\\
n_++2
\end{array} &
\left(
\begin{array}{ccccccc}
2\hat{n} & 1 & \cdots & 1 & -1 & \cdots & -1\\
1 & 2\hat{n} & & 1 & -1 & \cdots & -1\\
\vdots & &\ddots & \vdots \\
1 & 1 && 2\hat{n} & -1 & \cdots & -1\\
-1 & -1 && 1 & -2(\hat{n}-1) & \cdots & 1\\
\vdots  & \vdots && \vdots && \ddots \\
-1 & -1 && 1 &1 && -2(\hat{n}-1)
\end{array}
\right)
\end{array}
\]
with $\hat{n}=n_0-n_+=n_--n_0$.
See Eqs.~\eqref{eqn:A1} and \eqref{eqn:A2}.

\begin{lem}
\label{lem:6b}
The numbers of zero and positive eigenvalues of the matrix $A_0$
 are one and $\min(n_-,n_++1)$, respectively.
In particular, the matrix $A_0$ has no positive eigenvalue if and only if $n_-=0$, i.e., $n_+=2n_0$.
\end{lem}

\begin{proof}
Let $e_j\in\Rset^n$ be the vector of which the $j$th element is one and the others are zero,
 for $j=1,\dots,n$.
We easily see that $\sum_{j=1}^ne_j$ is an eigenvector for the zero eigenvalue.
Moreover, we can show the following:
\begin{enumerate}
\setlength{\leftskip}{-1.8em}
\item[(a)] Case of $n_+=2n_0$ ($n_-=0$):\\
$e_1-e_j$, $j=2,\ldots, n$, are $n-1$ eigenvectors for the eigenvalue $-n=-(2n_0+1)$. 
\item[(b)] Case of $n_+=2n_0-1$  ($n_-=1$):\\
$e_1+\ldots+e_{n-1}-(n-1)e_n$
 is an eigenvector for the eigenvalue $n$.
Moreover, $e_1-e_j$, $j=2,\ldots,n-1$, are $n-2$ eigenvectors
 for the eigenvalue $-(n-2)=-(2n_0-1)$.
{In particular, the matrix $A_0$ has only one positive eigenvalue.}
\item[(c)] Case of $0<n_+<2n_0-1$  ($1<n_-<2n_0$):\\
$n_-(e_1+\ldots+e_{n_++1})-(n_++1)(e_{n_++2}+\cdots+e_n)$
 is an eigenvector for the eigenvalue $n$.
Moreover, $e_1-e_j$, $j=2,\ldots,n_++1$, are $n_+$ eigenvectors for the eigenvalue $2\hat{n}-1$
 and $e_{n_++2}-e_j$, $j=n_++3,\ldots,n$, are $n_--1$ eigenvectors
 for the eigenvalue $-2\hat{n}+1$.
Only either the eigenvalue $-2\hat{n}+1$ or $2\hat{n}-1$ is positive,
 depending on whether $n_+\ge n_0$ or not.
Thus, the matrix $A_0$ has $\min(n_-,n_++1)$ positive eigenvalues.
\item[(d)] Case of $n_+=0$  ($n_-=2n_0$):\\
$(n-1)e_1-(e_2+\cdots+e_n)$ is an eigenvector for the eigenvalue $n$.
Moreover, $e_2-e_j$, $j=3,\ldots,n$, are $n-2$ eigenvectors
 for the eigenvalue $-(n-2)$.
{In particular, the matrix $A_0$ has only one positive eigenvalue.}
\end{enumerate}
Obviously, $\min(n_-,n_++1)=1$ in cases (b) and (d) while it is zero in case (a).
Hence, we obtain the desired result.
\end{proof}

When $\xi=n_0a/nK|\hat{C}_\D^\sigma|\approx 0$,
 the conclusion of Lemma~\ref{lem:6b} on the numbers of zero and positive eigenvalues
 also hold for $A$.
So, when $\sigma_i=1$, $i\in[2n_0]$,
 the matrix $A$ has no positive eigenvalue for $\xi\approx 0$.
Since $\d\chi^\sigma(\xi)/\d\xi=0$ only at $\xi=\xi_0$,
 as shown in the proof of Theorem~\ref{thm:5a}(i),
 we obtain part~(i) by Lemma~\ref{lem:6a}.
Note that at $\xi=\xi_0$ the matrix $A$ keeps the zero eigenvalue
 and one of the negative eigenvalues becomes positive
 at $\xi=\xi_\ast$ when $\xi$ is increased from zero.

We turn to the proof of part~(ii).
{\color{black}
Let $\sigma\in\Sigma_{n_0}$ be fixed and let $\beta=a/K$,
 as in the proof of Lemma~\ref{lem:6a}.
We also write $v^\sigma(\xi)=v^\sigma\in\Tset^{2n_0}$
 and $f(v;\beta)$ for the the vector field of \eqref{eqn:dsysor}
 when the time is changed as $t\to t/K$.
When $\beta=|\chi^\sigma(\xi)|\neq 0$,
 we} compute
\[
\frac{\d v_i^\sigma}{\d\xi}(\xi)
=\begin{cases}
\displaystyle\pm\sigma_i\left(\frac{i-n_0-1}{n_0}\right)\bigg/
 \sqrt{1-\left(\frac{i-n_0-1}{n_0}\xi\right)^2} & \mbox{for $i\le n_0$};\\[2ex]
\displaystyle\pm\sigma_i\left(\frac{i-n_0}{n_0}\right)\bigg/
 \sqrt{1-\left(\frac{i-n_0}{n_0}\xi\right)^2}  & \mbox{for $n_0<i\le 2n_0$}
\end{cases}
\]
for $i\in[2n_0]$,
 where $v_i^\sigma(\xi)$ is the $i$th element of $v^\sigma(\xi)$
 and  the upper or lower sign is taken,
 depending on whether $\chi^\sigma(\xi)$ is positive or not.

{\color{black}
Suppose that $\D_vf(v^\sigma(\xi);|\chi^\sigma(\xi_*)|)$
 has a simple zero eigenvalue at $\xi=\xi_*$ and let $\beta_*=|\chi^\sigma(\xi_*)|\neq 0$.
By Lemma~\ref{lem:6a},
 $(\d v/\d\xi)(\xi_*)$ is the associated eigenvector
 and $(\d\chi^\sigma/\d\xi)(\xi)$ has a zero at $\xi=\xi_*$.
The equilibrium $v^\sigma(\xi_*)$ has a one-dimensional center manifold
 on which the system \eqref{eqn:dsysor} reduces to \eqref{eqn:lem6a}.
Substituting $v=(\d v^\sigma/\d\xi)(\xi_*)v_\c+v^\sigma(\xi_*)$
 into \eqref{eqn:dsysor}
 and taking the inner product of the resulting equation with $(\d v/\d\xi)(\xi_*)$,
 we see that
\[
c_2=\frac{\d v_i^\sigma}{\d\xi}(\xi_*)
 \cdot\frac{\partial f_i}{\partial\beta}(v^\sigma(\xi_*);\beta_*)\bigg/
 \biggl| \frac{\d v_i^\sigma}{\d\xi}(\xi_*)\biggr|^2
\]
is positive or negative,
 depending on whether $h^\sigma(\xi_*)\chi^\sigma(\xi_*)$ is positive or negative, 
 where the dot ``$\cdot$''represents the standard inner product and
\begin{align*}
h^\sigma(\xi)=&\frac{1}{n_0}\left(\sum_{i=1}^{n_0}\sigma_i\left(\frac{i-n_0-1}{n_0}\right)^2\bigg/
 \sqrt{1-\left(\frac{i-n_0-1}{n_0}\xi\right)^2}\right.\\
&
+\left.\sum_{i=n_0+1}^{2n_0}\sigma_i\left(\frac{i-n_0}{n_0}\right)^2\bigg/
 \sqrt{1-\left(\frac{i-n_0}{n_0}\xi\right)^2}\right),
\end{align*}
since by \eqref{eqn:dfb}
\begin{equation}
\frac{\d v_i^\sigma}{\d\xi}(\xi_*)
 \cdot\frac{\partial f_i}{\partial\beta}(v^\sigma(\xi_*);\beta_*)
=\pm\frac{n_0^2}{n}h^\sigma(\xi_*),
\label{eqn:h}
\end{equation}
where the upper or lower sign is taken,
  depending on whether $\chi^\sigma(\xi_*)$ is positive or negative.}
We can determine the sign of $h^\sigma(\xi_*)\chi^\sigma(\xi_*)$ as follows.

\begin{lem}
\label{lem:6c}
{\color{black}If $\chi^\sigma(\xi_*)\neq 0$,
 then} $h^\sigma(\xi_*)\chi^\sigma(\xi_*)>0$.
\end{lem}

\begin{proof}
We easily see that
\begin{align}
\frac{\d\chi^\sigma}{\d\xi}(\xi)
=&\frac{1}{n_0}\left(1+
\sum_{i=1}^{n_0}\sigma_i\left(1-2\left(\frac{i-n_0-1}{n_0}\xi\right)^2\right)\bigg/
 \sqrt{1-\left(\frac{i-n_0-1}{n_0}\xi\right)^2}\right.\notag\\
&
\left.+\sum_{i=n_0+1}^{2n_0}\sigma_i\left(1-2\left(\frac{i-n_0}{n_0}\xi\right)^2\right)\bigg/
 \sqrt{1-\left(\frac{i-n_0}{n_0}\xi\right)^2}\right)\notag\\
=&\frac{\chi^\sigma(\xi)}{\xi}-h^\sigma(\xi)\xi^2
\label{eqn:lem6c}
\end{align}
for $\xi\in(0,1)$, so that
\[
h^\sigma(\xi_*)=\frac{\chi^\sigma(\xi_*)}{\xi_*^3}
\]
{\color{black}since $(\d\chi^\sigma/\d\xi)(\xi_*)=0$.}
This yields the desired result.
\end{proof}

By Lemma~\ref{lem:6c} we see that $c_2>0$.
Using \eqref{eqn:lem6a}, we obtain the following.

{\color{black}
\begin{lem}
\label{lem:6d}
If $|\chi^\sigma(\xi)|$ has a local maximum $($resp. minimum$)$
 at $\xi=\xi_\ast$ with $|\chi^\sigma(\xi_\ast)|\neq 0$,
 then a negative $($resp. positive$)$ eigenvalue
 of $\D_v f(v^\sigma(\xi);|\chi^\sigma(\xi)|)$
 becomes positive $($resp. negative$)$ when $\xi$ increases near $\xi_*$
 while the other nonzero eigenvalues do not change their signs.
\end{lem}

\begin{figure}[t]
\includegraphics[scale=0.8]{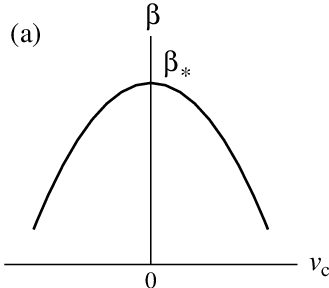}\qquad
\includegraphics[scale=0.8]{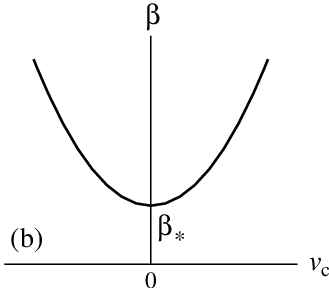}
\caption{Bifurcation diagrams of equilibria for \eqref{eqn:lem6a}:
(a) $\beta=|\chi^\sigma(\xi)|$ has a local maximum; (b) local minimum.
\label{fig:6a}}
\end{figure}

\begin{proof}
Suppose that $|\chi^\sigma(\xi)|$ has a local maximum $($resp. minimum$)$ at $\xi=\xi_*$
It is clear that $c_1$ is positive (resp. negative) in \eqref{eqn:lem6a},
 since $c_2>0$ and by Theorem~\ref{thm:5a}(i)
 a subrcritical (resp. supercritical) saddle-node bifurcation occurs
 when $\beta$ is taken as a control parameter. 
Noting that
\[
v_\c=\frac{\d v^\sigma}{\d\xi}(\xi_*)\cdot(v-v^\sigma(\xi_*))
 \bigg/\left|\frac{\d v^\sigma}{\d\xi}(\xi_*)\right|^2,
\]
we obtain the desired result
 since the derivative of the right-hand side of \eqref{eqn:lem6a}
  with respect to $v_\c$
 at the equilibrium changes from negative to positive
 (resp.  from positive to negative)
 when the sign of $v_\c$ at the equilibrium changes from negative to positive
 (see Fig.~\ref{fig:6a}).
\end{proof}

The graph $\beta=\chi^\sigma(\xi)$ may intersect the zero-axis.
We have the following when such intersection occurs.

\begin{lem}
\label{lem:6e}
Let $\ell_+(\xi)$ and $\ell_-(\xi)$ be, respectively,
 the numbers of positive and negative eigenvalues
 of the Jacobian matrix $\D_v f(v^\sigma(\xi);|\chi^\sigma(\xi)|)$.
If $\chi^\sigma(\xi_*)=0$ for $\xi_*\in(0,1)$,
 then the following hold$\,:$
\begin{enumerate}
\setlength{\leftskip}{-1.8em}
\item[(i)]
$\chi^\sigma(\xi)$ has no extremum at $\xi=\xi_*$,
 i.e., it intersects the zero-axis at $\xi=\xi_*$ transversely$\,;$
\item[(ii)]
When $\xi_1<\xi_*<\xi_2$ such that $(\d\chi^\sigma/\d\xi)(\xi)\neq 0$ on $[\xi_1,\xi_2]$,
 we have $\ell_+(\xi_1)=\ell_-(\xi_2)$ and $\ell_-(\xi_1)=\ell_+(\xi_2)$.
\end{enumerate}
\end{lem}

\begin{proof}
We first prove part~(i) by contradiction.
Assume that $(\d\chi^\sigma/d\xi)(\xi_*)=0$.
Noting that Eq.~\eqref{eqn:lem6c} holds when $\chi^\sigma(\xi_*)=0$,
 we have $h^\sigma(\xi)=0$,
 so that by \eqref{eqn:h} $c_2=0$ in \eqref{eqn:lem6a}.
Since the $O(v_\c^{j+1})$-terms in \eqref{eqn:lem6a} are independent of $\beta$, 
 there is no analytic family of equilibria near $\beta=0$.
This yields a contradiction since $v^\sigma(\xi)$ is such a family of equilibria.
Thus, we obtain part~(i).

On the other hand, 
 by \eqref{eqn:dsolor} and \eqref{eqn:cor4a},
 we can take $\xi_1,\xi_2$ near $\xi_*$ such that $v^\sigma(\xi_1)=-v^\sigma(\xi_2)$.
Then $\D_vf(v^\sigma(\xi_1);|\chi^\sigma(\xi_1)|)=-\D_vf(v^\sigma(\xi_2);|\chi^\sigma(\xi_2)|)$.
This yields part~(ii).
\end{proof}

Now we are in position to complete the proof of Theorem~\ref{thm:6a}(ii).
We see that $|\chi^\sigma(\xi)|$ takes its local maximum before it takes a local minimum,
 when $\xi$ increases from zero,
 since it tends to zero as $\xi\to 0$.
Using Lemmas~\ref{lem:6b} and \ref{lem:6d},
 we show that $\D_v f(v^\sigma(\xi);|\chi^\sigma(\xi)|)$
 keeps one positive eigenvalue at least
 unless $\chi^\sigma(\xi)$ intersects the zero axis or before it does,
 except when $\sigma_i=1$, $i\in[2n_0]$.
By Lemma~\ref{lem:6e} this is true after it intersects the zero axis if it does,
 since $\ell_+(\xi),\ell_-(\xi)\ge 1$ before that, as shown just above.
Thus, we obtain the desired result.}
\end{proof}

\begin{rmk}
In Section~$3$ of {\rm\cite{MS05}},
 the statement of Theorem~{\rm\ref{thm:6a}(ii)} was given in more general setting
 but their proof was not complete.
\end{rmk}

\begin{figure}[t]
\includegraphics[scale=0.58]{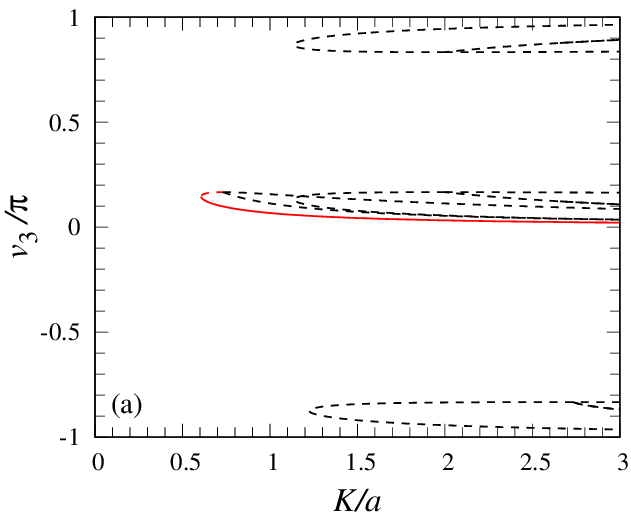}\
\includegraphics[scale=0.58]{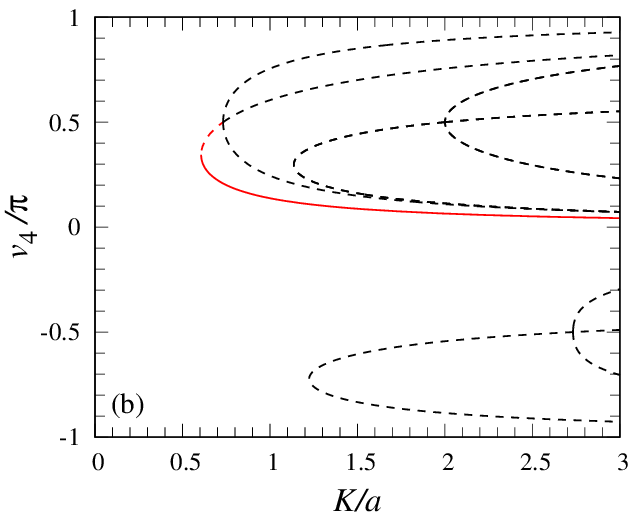}
\caption{Numerically computed bifurcation diagram of equilibria in \eqref{eqn:dsysor} with $n=5$:
(a) $v_3$-component; (b) $v_4$-component.
See the text for more details.
\label{fig:6b}}
\end{figure}

\begin{figure}[t]
\includegraphics[scale=0.58]{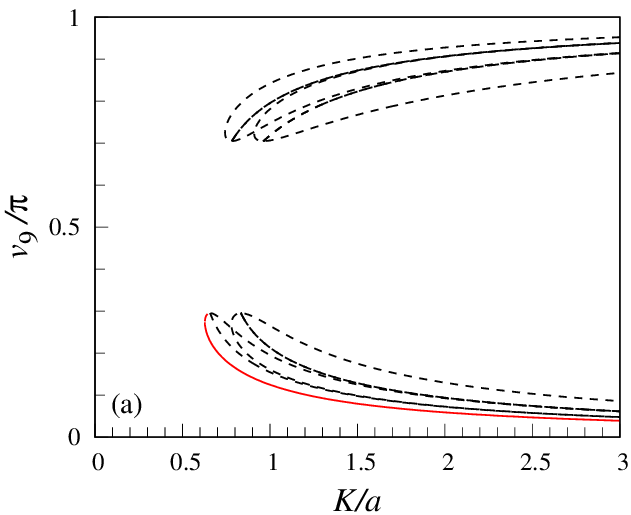}\
\includegraphics[scale=0.58]{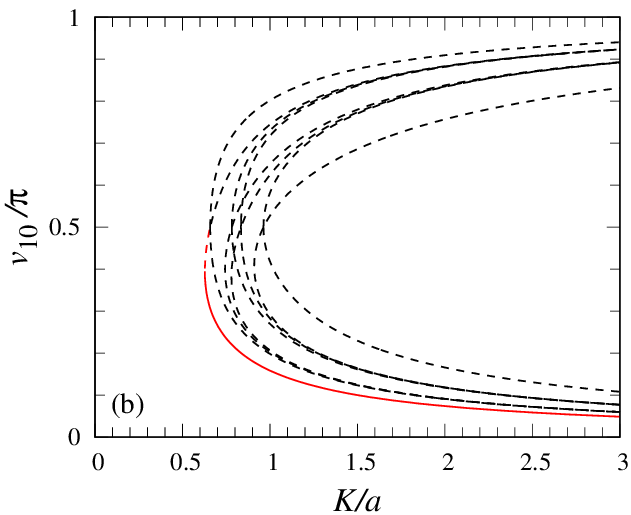}
\caption{Numerically computed bifurcation diagram of equilibria in \eqref{eqn:dsysor} with $n=11$:
(a) $v_9$-component; (b) $v_{10}$-component.
See the text for more details.
\label{fig:6c}}
\end{figure}

Numerically computed bifurcation diagrams of equilibria in \eqref{eqn:dsysor} for $n=5$ and $11$
 are displayed in Figs.~\ref{fig:6b} and \ref{fig:6c}, respectively.
Here the computer tool \texttt{AUTO} \cite{DO12} was used, again.
The solid and dashed lines represent stable and unstable  equilibria, respectively.
The red and black lines, respectively,
 correspond to the equilibrium given by \eqref{eqn:dsolor0} and the others.
They are plotted for all equilibria in Fig.~\ref{fig:6b},
 but only for $(\sigma_2,\sigma_3,\sigma_4,\sigma_5)=(1,1,1,1)$,
\[
(\sigma_6,\sigma_7,\sigma_8,\sigma_9)
 =(1,1,1,1), (1,1,1,-1)\mbox{ or }(1,1,-1,1)
\]
and $\sigma_1,\sigma_{10}=1$ or $-1$ in Fig.~\ref{fig:6c}.
We observe that only the equilibrium with $\sigma_i=1$, $i\in[2n_0]$,
 i.e., given by \eqref{eqn:dsolor0}, can be stable,
 as in the case of $n=3$ and stated in Theorem~\ref{thm:6a}.


\section{Continuum Limits}

Finally we consider the CL \eqref{eqn:csys}
 with the frequency function \eqref{eqn:omegaex}
 and discuss the implications of the above results to it.

The synchronized solution \eqref{eqn:csol} becomes
\begin{equation}
u(t,x)=U(x)+\theta,\quad
U(x)=\arcsin\left(\frac{a(x-\tfrac{1}{2})}{KC}\right),
\label{eqn:csol0}
\end{equation}
where $\theta\in\Sset^1$ is an arbitrary constant
 and the constant $C$ satisfies \eqref{eqn:Cex}.
{\color{black}
We easily see that the CL \eqref{eqn:csys}
 has another family of continuous stationary solutions
\begin{equation}
u(t,x)=\pi-U(x)+\theta
\label{eqn:csol1}
\end{equation}
where $U(x)$ is the same as in \eqref{eqn:csol0}.
From an argument in Section~1,
 we obtain the following result.

\begin{thm}
\label{thm:7a}
The two families \eqref{eqn:csol0} and \eqref{eqn:csol1} give the only continuous stationary solutions
 in the CL \eqref{eqn:csys} with the frequency function \eqref{eqn:omega}
 and they exist if and only if $K/a\ge 2/\pi$.
\end{thm}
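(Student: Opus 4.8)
The plan is to characterize every continuous stationary solution by an order-parameter reduction, use the connectedness of $I$ to show that only two branches of the reduced relation can occur, and then read off the existence range from the Section~1 analysis of \eqref{eqn:C}. To begin, I would rewrite the stationarity condition $0=\omega(x)+K\int_I\sin(u(y)-u(x))\,\d y$ for a continuous $u$ as
\[
0=\omega(x)+K\,\mathrm{Im}\left(e^{-iu(x)}\int_I e^{iu(y)}\,\d y\right),
\]
and set $\int_I e^{iu(y)}\,\d y=Re^{i\psi}$ with $R\ge0$ and $\psi\in\Sset^1$. Since $\omega(x)=a(x-\tfrac12)\not\equiv0$, the order parameter cannot vanish, so $R>0$ and the condition becomes the scalar relation $\sin(u(x)-\psi)=a(x-\tfrac12)/(KR)$ for $x\in I$; in particular, evaluating at $x=0$ and $x=1$ forces $a/(2KR)\le1$.

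Next I would use continuity. On $(0,1)$ the quantity $a(x-\tfrac12)/(KR)$ lies strictly inside $(-1,1)$, so a continuous lift of $x\mapsto u(x)-\psi$ never meets $\{\tfrac12\pi+k\pi:k\in\Zset\}$ there, and since $(0,1)$ is connected it stays in a single interval on which $\sin$ is injective; by continuity the same holds on all of $I=[0,1]$. Hence exactly one of
\[
u(x)=\psi+\arcsin\left(\frac{a(x-\tfrac12)}{KR}\right),\qquad
u(x)=\psi+\pi-\arcsin\left(\frac{a(x-\tfrac12)}{KR}\right)
\]
holds for every $x\in I$. Substituting each form into $Re^{i\psi}=\int_I e^{iu(y)}\,\d y$ and using that $\int_I\sin(\arcsin(a(x-\tfrac12)/KR))\,\d x=\tfrac{a}{KR}\int_I(x-\tfrac12)\,\d x=0$ by the antisymmetry of $\omega$ about $x=\tfrac12$, I would find that $R$ must satisfy \eqref{eqn:CU} with $\Omega=0$, so $R=C$ with $C$ as in \eqref{eqn:Cex}; the two branches then coincide with \eqref{eqn:csol0} and \eqref{eqn:csol1} for $\theta=\psi$. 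Together with the fact, already observed before the theorem, that both of these are stationary, this shows that they exhaust the continuous stationary solutions.

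For the existence claim I would invoke Section~1 directly: with $\omega(x)=a(x-\tfrac12)$, equation \eqref{eqn:CU} is equivalent to $a/K=\varphi(a/(2KC))$, where $\varphi(\eta)=\arcsin\eta+\eta\sqrt{1-\eta^2}$ is strictly increasing on $(0,1)$ with $\varphi(0)=0$ and $\varphi(1)=\tfrac12\pi$. Hence a constant $C>0$, equivalently $a/(2KC)\in(0,1]$, exists if and only if $a/K\le\tfrac12\pi$, i.e.\ $K/a\ge2/\pi$; and since both families are built from this same $C$, they exist precisely on that range.

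I expect the main obstacle to be the continuity/connectedness step that rules out branch switching, especially the boundary case $K/a=2/\pi$, where $a/(2KC)=1$ and the two arcsine branches meet at $x=0$ and $x=1$: there one has to argue that a continuous $u$ still lies on a single global branch, and to keep careful track of the constant $\psi$ so that it is absorbed into the free rotation $\theta$ of \eqref{eqn:csol0}--\eqref{eqn:csol1}.
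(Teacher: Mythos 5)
Your strategy (order-parameter reduction, then connectedness of $I$ to pin the solution to a single arcsine branch) is sound, and it is in fact more than the paper offers: the paper's ``proof'' is a one-line appeal to the Section~1 computation and never carries out the uniqueness argument at all. Your reduction to $\sin(u(x)-\psi)=a(x-\tfrac12)/(KR)$ with $R>0$, and the branch-locking argument on $(0,1)$, are correct.

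The genuine gap is in the self-consistency step for the second branch, which you assert rather than compute. For $u(x)=\psi+\pi-\arcsin\bigl(a(x-\tfrac12)/(KR)\bigr)$ one has $\cos(u(y)-\psi)=-\sqrt{1-s(y)^2}$ with $s(y)=a(y-\tfrac12)/(KR)$, so the real part of $Re^{i\psi}=\int_I e^{iu(y)}\,\d y$ reads $R=-\int_I\sqrt{1-s(y)^2}\,\d y<0$, contradicting $R>0$. Carried out correctly, your own method therefore shows that the second branch supports \emph{no} stationary solution, i.e.\ it does not produce \eqref{eqn:csol1}; only \eqref{eqn:csol0} survives. This is not a technicality about the endpoints $x=0,1$ (the issue you flagged); it is a sign obstruction in the closure relation. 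It is confirmed by direct substitution: with $U$ and $C$ as in \eqref{eqn:csol0} one computes $\int_I\sin U(y)\,\d y=0$ and $\int_I\cos U(y)\,\d y=C$, hence for $u=\pi-U+\theta$,
\[
\omega(x)+K\int_I\sin(u(y)-u(x))\,\d y=a(x-\tfrac12)+KC\sin U(x)=2a(x-\tfrac12)\neq0 .
\]
(The ``reversed'' profile that does close up requires the negative root $C'=-C$ of the consistency relation, but $\pi-\arcsin\bigl(a(x-\tfrac12)/(KC')\bigr)=\pi+U(x)$, which is merely a $\pi$-shift of \eqref{eqn:csol0} and hence already contained in that family.) So your write-up, as it stands, silently passes over a contradiction; you should either exhibit a correct verification that \eqref{eqn:csol1} is stationary or note explicitly that your argument yields only the single family \eqref{eqn:csol0}, which conflicts with the statement being proved.
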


We remark that the two family \eqref{eqn:csol0} and \eqref{eqn:csol1}
 do not coincide at $K/a=2/\pi$.}
Theorem~\ref{thm:7a} does not deny the existence of other discontinuous stationary solutions
 to \eqref{eqn:csys}.
Indeed, Eq.~\eqref{eqn:csys} has infinitely many one-parameter families
 of discontinuous stationary solutions of the form
\begin{equation}
u(t,x)=\begin{cases}
U(x)+\theta
 & \mbox{for $x\in[0,1]\setminus\hat{I}$;}\\
\pi-U(x)+\theta & \mbox{for $x\in\hat{I}_j^+$, $j\in[m_+]$;}\\
-U(x)-\pi+\theta & \mbox{for $x\in\hat{I}_j^-$, $j\in[m_-]$,}
\end{cases}
\label{eqn:csol2}
\end{equation}
where $\theta\in\Sset^1$ is an arbitrary constant,
 $m_\pm$ are nonnegative integers and may be infinite,
 $\hat{I}_j^{\pm}\subset[0,1]$, $j\in[m_\pm]$, are intervals
 such that $\hat{I}_j^-\subset[0,\tfrac{1}{2}]$, $\hat{I}_j^+\subset[\tfrac{1}{2},1]$
 and $\hat{I}_j^-\cap \hat{I}_k^-,\hat{I}_j^+\cap \hat{I}_k^+=\emptyset$,
\[
\hat{I}=\bigcup_{j=1}^{m_-}\hat{I}_j^-\cup\bigcup_{j=1}^{m_+}\hat{I}_j^+\neq\emptyset,
\]
and the constant $C$ in $U(x)$ satisfies
\[
C=\int_{[0,1]\setminus\hat{I}}\sqrt{1-\left(\frac{a(x-\tfrac{1}{2})}{KC}\right)^2}\d x
 -\int_{\hat{I}}\sqrt{1-\left(\frac{a(x-\tfrac{1}{2})}{KC}\right)^2}\d x.
\]
Here $U(x)\ge 0$ on $I_j^+$, $j\in[m_+]$,
 $U(x)<0$ on $I_j^-$, $j\in[m_-]$,
 and the interiors of $\hat{I}_j^\pm$, $j\in[m_\pm]$, may be empty. 
We see that the one-parameter family of synchronized solutions given by \eqref{eqn:dsyssol}
 converge to \eqref{eqn:csol2} when
\begin{equation}
\sigma_i=\begin{cases}
1 & \mbox{if $i/n\in[0,1]\setminus\hat{I}$;}\\
-1 & \mbox{if $i/n\in\hat{I}$.}
\end{cases}
\label{eqn:sigma1}
\end{equation}
Using Theorems~\ref{thm:2c} and \ref{thm:2d}, we prove the following

{\color{black}
\begin{thm}\
\label{thm:7b}
\begin{enumerate}
\setlength{\leftskip}{-1.8em}
\item[(i)]
The family of continuous stationary solutions given by \eqref{eqn:csol0} is asymptotically stable,
 while the family of continuous stationary solutions given by \eqref{eqn:csol1}
 is unstable.
\item[(ii)]
The family of discontinuous stationary solutions given by \eqref{eqn:csol2} is unstable
 if it is different from \eqref{eqn:csol0} in the sense of $L^2(I)$.
\end{enumerate}
\end{thm}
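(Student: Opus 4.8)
The strategy is to deduce the CL statements from the finite-$n$ results of Theorem~\ref{thm:6a} by letting $n\to\infty$, using Theorem~\ref{thm:2c}(i) to transfer stability and Theorem~\ref{thm:2d}(i) to transfer instability. The key input is that each stationary solution of the CL \eqref{eqn:csys} in the statement is the $L^2(I)$-limit, as $n\to\infty$, of an explicitly identified one-parameter family \eqref{eqn:dsyssol} of synchronized solutions of the KM \eqref{eqn:dsyso}, and that by Theorem~\ref{thm:4a}(i) the families \eqref{eqn:dsyssol}, $\sigma\in\Sigma_{n_0}$, are \emph{all} of the equilibria of \eqref{eqn:dsyso}.

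First I would handle \eqref{eqn:csol0}. The family \eqref{eqn:dsyssol} with $\sigma_i=1$ for every $i\in[2n_0]$, taken on the branch $\xi<\xi_0$ with $\xi_0$ as in Theorem~\ref{thm:5a}(i), converges to \eqref{eqn:csol0} in the sense of \eqref{eqn:2c}: by \eqref{eqn:lim} one has $\hat C_\D^\sigma=C_\D\to C$, hence $\xi=n_0a/nKC_\D\to a/2KC<1$, which lies below $\xi_0$ for $n$ large because $\xi_0\to1$ as $n\to\infty$. Since this family is asymptotically stable for all large $n$ by Theorem~\ref{thm:6a}(i), Theorem~\ref{thm:2c}(i) gives that the family \eqref{eqn:csol0} is asymptotically stable in the CL. For the family \eqref{eqn:csol1} I would identify the corresponding KM family among \eqref{eqn:dsyssol}; it is one with $\sigma\neq(1,\dots,1)$, so $\sigma_i=-1$ for some $i\in[2n_0]$, and hence it is unstable for all large $n$ by Theorem~\ref{thm:6a}(ii). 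To invoke Theorem~\ref{thm:2d}(i) I must also check that no stable family of solutions of the KM converges to \eqref{eqn:csol1}; this follows because, by Theorems~\ref{thm:4a}(i) and \ref{thm:6a}, the only family of equilibria of \eqref{eqn:dsyso} that is ever stable is $\sigma_i\equiv1$ with $\xi<\xi_0$, which converges to \eqref{eqn:csol0}$\neq$\eqref{eqn:csol1}. Thus \eqref{eqn:csol1} is unstable.

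For part~(ii), suppose \eqref{eqn:csol2} differs from \eqref{eqn:csol0} in $L^2(I)$, so $\hat I$ has positive Lebesgue measure. Then for $n$ large the sequence $\sigma\in\Sigma_{n_0}$ defined by \eqref{eqn:sigma1} has $\sigma_i=-1$ for at least one $i$, and the family \eqref{eqn:dsyssol} with this $\sigma$ converges to \eqref{eqn:csol2}: one verifies that \eqref{eqn:CDj} passes in the limit to the defining relation for $C$ in \eqref{eqn:csol2} by splitting the Riemann sum according to whether $i/n\in\hat I$, after which the components of \eqref{eqn:dsolor} converge pointwise to the three branches of \eqref{eqn:csol2}. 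By Theorem~\ref{thm:6a}(ii) this family is unstable for all large $n$, and exactly as in part~(i) no stable family of solutions of the KM converges to \eqref{eqn:csol2}. Theorem~\ref{thm:2d}(i) then yields that \eqref{eqn:csol2} is unstable.

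The main obstacle I anticipate is verifying the hypothesis of Theorem~\ref{thm:2d}(i) that ``no stable family of solutions to the KM converges to $\U$''. For families of equilibria this is immediate from Theorems~\ref{thm:4a}(i) and \ref{thm:6a}, but one must also exclude stable non-equilibrium limit sets of \eqref{eqn:dsyso} accumulating on the relevant configurations; here Proposition~\ref{prop:5b} and Remark~\ref{rmk:5b}, which rule out Hopf bifurcations and any other bifurcation of the equilibria \eqref{eqn:dsyssol}, are what is needed. A related point requiring care is the one flagged in Remark~\ref{rmk:5a}(ii): the family \eqref{eqn:csol0} is the $L^2(I)$-limit of \emph{two} distinct KM families — the asymptotically stable branch $\xi<\xi_0$ and an unstable branch — so Theorem~\ref{thm:2d}(i) must not be, and is not, applied to the latter. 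This is precisely why the ``no stable family converges'' hypothesis cannot be dropped, and why part~(i) distinguishes \eqref{eqn:csol0} from \eqref{eqn:csol1}.
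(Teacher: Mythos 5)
Your proposal is correct and follows essentially the same route as the paper: identify each CL stationary family as the $L^2(I)$-limit of an explicit KM family \eqref{eqn:dsyssol} (with $\sigma_i\equiv 1$ for \eqref{eqn:csol0}, $\sigma_i\equiv -1$ for \eqref{eqn:csol1}, and \eqref{eqn:sigma1} for \eqref{eqn:csol2}), then transfer asymptotic stability via Theorem~\ref{thm:2c}(i) and instability via Theorem~\ref{thm:2d}(i), using $\xi_0\to 1$ to place the stable branch. The only difference is that you explicitly verify the ``no stable family converges to $\U$'' hypothesis of Theorem~\ref{thm:2d}(i) via Theorems~\ref{thm:4a}(i) and \ref{thm:6a}, a check the paper leaves implicit.
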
}
 
\begin{proof}
We easily see that the family of synchronized solutions \eqref{eqn:dsyssol}
 for $v=v^\sigma$ with $\sigma_i=1$, $i\in[2n_0]$,
 which is asymptotically stable for $\xi<\xi_0$ by Theorem~\ref{thm:6a}(i),
 in the KM \eqref{eqn:dsys}
 converges to the family of continuous stationary solutions \eqref{eqn:csol0} as $n\to\infty$.
Recall  that $\chi^\sigma(\xi)$  has a unique extremum (local maximum) at $\xi=\xi_0$
 (see Theorem~\ref{thm:5a}(i)).
Moreover, since
\[
\chi^\sigma(\xi)\to\frac{n}{n_0}\xi\int_0^1\sqrt{1-\xi^2x^2}\,\d x
=\frac{n}{2n_0}\bigl(\xi\sqrt{1-\xi^2}+\arcsin\xi\bigr)
=\frac{n}{2n_0}\varphi(\xi),
\]
we have $\xi_0\to 1$ as $n\to\infty$.
{\color{black}
On the other hand, the family of synchronized solutions \eqref{eqn:dsyssol}
 for $v=v^\sigma$ with $\sigma_i=-1$, $i\in[2n_0]$,
 which is unstable by Theorem~\ref{thm:6a}(ii),
 in the KM \eqref{eqn:dsys}
 converges to the family of continuous stationary solutions \eqref{eqn:csol1} as $n\to\infty$.
Using Theorems~\ref{thm:2c}(i) and \ref{thm:2d}(i),} we obtain part~(i).

We turn to the proof of part~(ii).
The family of synchronized solutions \eqref{eqn:dsyssol}
 for $v=v^\sigma$ with \eqref{eqn:sigma1},
 which is unstable by Theorem~\ref{thm:6a}(ii),
 converges to the family of discontinuous stationary solutions \eqref{eqn:csol2} in $L^2(I)$
 as $n\to\infty$.
Hence, by Theorem~\ref{thm:2d}(i),
 if it is different from \eqref{eqn:csol0} in the sense of $L^2(I)$,
 then the family \eqref{eqn:csol2} is unstable.
\end{proof}

From Theorems~\ref{thm:5a} and \ref{thm:6a} we obtain the following result
 for the KM \eqref{eqn:dsys}.
The family of synchronized solutions \eqref{eqn:dsyssol} for $v^\sigma$ with $\sigma_i=1$, $i\in[2n_0]$,
 suffers a saddle-node bifurcation at $\xi=\xi_0$, i.e., $K=n_0a/n\chi^\sigma(\xi_0)$
 (see Eq.~\eqref{eqn:con}),
 and it turns unstable from stable.
In addition, it suffers a pitchfork bifurcation at $\xi=1$, i.e., $K=n_0a/n\chi^\sigma(1)$,
 and it changes to the family
 with $\sigma_1=\sigma_{2n_0}=-1$ and $\sigma_j=1$, $i\in[2n_0]\setminus\{1,n_0\}$,
 while two families with $\sigma_1=-1$, $\sigma_{2n_0}=1$ or $\sigma_1=1$, $\sigma_{2n_0}=-1$
 and $\sigma_j=1$, $i\in[2n_0]\setminus\{1,n_0\}$, are born.
When $n\to\infty$, the saddle-node and pitchfork bifurcations collide,
 since $\xi_0\to 1$ as shown in the proof of Theorem~\ref{thm:7b}.
Moreover, these four families converge to stationary solutions
 that are the same in the sense of $L^2(I)$ and asymptotically stable
 as a family of $L^2(I)$ solutions by Theorem~\ref{thm:7b}.
More generally, if $\sigma_i=1$
 except for a  fixed finite positive number of $i\in[2n_0]$,
 then by Theorem~\ref{thm:6a}(ii)
 the family of synchronized solutions \eqref{eqn:dsyssol} for $v^\sigma$ is unstable
 but converges to the asymptotically stable family \eqref{eqn:csol0} in the sense of $L^2(I)$
 as $n\to\infty$.
Thus, bifurcation behavior in the CL \eqref{eqn:csys} is very subtle,
 compared with finite-dimensional dynamical systems such as  the KM \eqref{eqn:dsys}.

\section*{Acknowledgements}
This work was partially supported by the JSPS KAKENHI Grant Number JP23K22409.
The author thanks the anonymous referees
 for their helpful comments and constructive suggestions,
 which have improved this work.


\appendix

\renewcommand{\theequation}{\Alph{section}.\arabic{equation}}

\section{Stability of the equilibria in \eqref{eqn:dsys3r}}

Let
\[
z=\sqrt{1-\left(\frac{a}{K}\right)^2}\in(0,1).
\]
For \eqref{eqn:dsol3b}, we have
\[
{-\cos v_1-\cos v_2-\cos(v_2-v_1)=1>0}
\]
and
\[
\cos v_1\cos v_2+(\cos v_1+\cos v_2)\cos(v_2-v_1)
 =-z^2<0.
\]
{By \eqref{eqn:u3}}
 the equilibria given by \eqref{eqn:dsol3b} are unstable.

Let
\[
z=\sqrt{1-\left(\frac{a}{3KC_\D}\right)^2}\in(0,1).
\]
For \eqref{eqn:dsol3r1}, we have
{
\[
-\cos v_1-\cos v_2-\cos(v_2-v_1)
 =-2z^2-2z+1=-2(z+\tfrac{1}{2})^2+\tfrac{3}{2}
\]
which is negative when $z>(\sqrt{3}-1)/2$,} and
\[
\cos v_1\cos v_2+(\cos v_1+\cos v_2)\cos(v_2-v_1)
 =4z^3+z^2-2z,
\]
which is {
 positive or negative,
 depending on whether $z$ is greater or less than $(\sqrt{33}-1)/8>(\sqrt{3}-1)/2$.}
By \eqref{eqn:s3} and \eqref{eqn:u3},
 the equilibria given by \eqref{eqn:dsol3r1} are asymptotically stable (resp. unstable)
 if $v_2=-v_1$ is less (resp. greater) than $v_0$.

Let
\[
z=\sqrt{1-\left(\frac{a}{3K\hat{C}_\D}\right)^2}\in(0,1).
\]
For \eqref{eqn:dsol3r2}, we have
{
\[
-\cos v_1-\cos v_2-\cos(v_2-v_1)
 =-2z^2+2z+1=-2(z-\tfrac{1}{2})^2+\tfrac{3}{2}>0.
\]
By \eqref{eqn:u3},}
 the equilibria given by \eqref{eqn:dsol3r2} are unstable.

\section{Line Colors  in Figs.~\ref{fig:5a}(b) and (c)}

In this appendix,
 we provide the meaning of line colors in Figs.~\ref{fig:5a}(b) and (c),
 where $n=11$ and $n_0=5$.
For each line color, $\sigma$ is specified for the graphs of the equilibria
 below in the same order as the corresponding graphs in Fig.~\ref{fig:5a}(b) or (c).

\subsection{Fig.~\ref{fig:5a}(b)}

For $\sigma\in\Sigma_5$ in Fig.~\ref{fig:5a}(b),
 we have $\sigma_{11-i}=\sigma_i$ for any $i\neq 1$
 and $(\sigma_1,\sigma_{10})=(1,1)$, $(-1,-1)$, $(-1,1)$ or $(1,-1)$.
So we only give the values of $\sigma_i$ for $i\in[5]\setminus\{1\}$ below.

{\color{black}
\begin{itemize}
\setlength{\leftskip}{-2.6em}
\item
Black lines:
\[
(\sigma_2,\sigma_3,\sigma_4,\sigma_5)=(1,1,1,1), (-1,1,-1,1), (-1,-1,1,-1).
\]
\item
Red lines:
\[
(\sigma_2,\sigma_3,\sigma_4,\sigma_5)=(-1,1,1,1), (-1,1,1,-1), (-1,1,-1,-1).
 \]
\item
Blue lines:
\[
(\sigma_2,\sigma_3,\sigma_4,\sigma_5)=(1,-1,1,1), (1,-1,-1,1), (1,-1,-1,-1).
\]
\item
Purple lines:
\[
(\sigma_2,\sigma_3,\sigma_4,\sigma_5)=(1,1,-1,1), (1,-1,1,-1), (-1,-1,-1,-1).
\]
\item
Green lines:
\[
(\sigma_2,\sigma_3,\sigma_4,\sigma_5)=(1,1,1,-1), (1,1,-1,-1).
\]
\item
Orange lines:
\[
(\sigma_2,\sigma_3,\sigma_4,\sigma_5)=(-1,-1,1,1), (-1,-1,-1,1).
\]
\end{itemize}}

\subsection{Fig.~\ref{fig:5a}(c)}
For $\sigma\in\Sigma_5$ in Fig.~\ref{fig:5a}(c),
 we have $\sigma_i\neq\sigma_{11-i}$ for some $i\neq 1$
 and $(\sigma_1,\sigma_{10})=(1,1)$, $(-1,-1)$, $(-1,1)$ or $(1,-1)$.
We only give the values of $\sigma_i$ for $i\in[5]\setminus\{1\}$ below,
 where $\sigma_i\neq\sigma_{11-i}$ if $\sigma_i=\pm 1$
 and $\sigma_i=\sigma_{11-i}$ otherwise.

{\color{black}
\begin{itemize}
\setlength{\leftskip}{-2.6em}
\item
Black lines:
\begin{align*}
(\sigma_2,\sigma_3,\sigma_4,\sigma_5)
=& (\pm 1,1,1,1), (\pm 1,1,1,\pm 1), (-1,1,1,\pm 1),\\
& (1,-1,\pm 1,1), (1,1,-1,\pm 1), (\pm 1,-1,1,\pm1),\\
& (1,-1,\pm 1,\pm 1), (-1,\pm 1,\pm 1,\pm 1), (\pm 1,-1,1,-1),\\
& (1,\pm 1,-1,-1), (\pm 1,\pm 1,-1,-1).
\end{align*}
\item
Red lines:
\begin{align*}
(\sigma_2,\sigma_3,\sigma_4,\sigma_5)
=& (1,\pm 1,1,1), (1,\pm 1,\pm 1,1), (\pm1,-1,1,1),\\
& (\pm1,1,1,-1), (1,1,\pm 1,-1), (\pm 1,\pm 1,-1,1),\\
& (1,\pm 1,-1,\pm 1), (-1,\pm 1,1,-1), (\pm 1,\pm 1,-1,\pm 1),\\
& (-1,-1,\pm 1,\pm 1), (-1,-1,-1,\pm 1).
\end{align*}
\item
Blue lines:
\begin{align*}
(\sigma_2,\sigma_3,\sigma_4,\sigma_5)
=& (1,1,\pm 1,1), (1,\pm 1,1,\pm 1), (\pm1,\pm 1,\pm 1,1),\\
& (1,-1,1,\pm 1), (-1,\pm 1,\pm 1,1), (\pm1,\pm 1,\pm 1,\pm 1),\\
& (1,\pm 1,\pm 1,-1),(-1,1,-1,\pm 1), (\pm 1,\pm 1,\pm 1,-1),\\
& (-1,\pm 1,-1,\pm 1), (-1,-1,\pm 1,-1).
\end{align*}
\item
Purple lines:
\begin{align*}
(\sigma_2,\sigma_3,\sigma_4,\sigma_5)
=& (1,1,1,\pm 1), (1,1,\pm 1,\pm 1), (\pm 1,\pm 1,1,\pm 1),\\
& (1,\pm 1,-1,1), (-1,\pm 1,1,\pm 1), (\pm1,\pm 1,1,-1),\\
& (-1,-1,\pm 1,1), (\pm 1,-1,-1,1), (\pm 1,1,-1,-1),\\
& (-1,\pm 1,\pm 1,-1), (-1,\pm 1,-1,-1).
\end{align*}
\item
Green lines:
\begin{align*}
(\sigma_2,\sigma_3,\sigma_4,\sigma_5)
=& (\pm 1,\pm 1,1,1), (-1,\pm 1,1,1), (\pm 1,1,-1,1),\\
& (1,\pm 1,\pm 1,\pm 1), (-1,1,\pm 1,\pm 1), (\pm 1,1,-1,\pm 1),\\
& (-1,-1,1,\pm 1), (-1,1,\pm 1,-1), (1,-1,-1,\pm 1), \\
& (\pm 1,-1,-1,\pm 1)
\end{align*}
\item
Orange lines:
\begin{align*}
(\sigma_2,\sigma_3,\sigma_4,\sigma_5)
=& (\pm 1,1,\pm 1,1), (-1,1,\pm 1,1), (\pm 1,1,\pm 1,\pm 1),\\
& (1,\pm 1,1,-1), (\pm 1,-1,\pm 1,1), (\pm 1,1,\pm 1,-1),\\
& (-1,\pm 1,-1,1),(\pm1,-1,\pm 1,\pm 1), (1,-1,\pm 1,-1),\\
& (\pm 1,-1,\pm 1,-1), (\pm 1,-1,-1,-1).
\end{align*}
\end{itemize}}


\end{document}